\def\uu{\mathcal U}
\def\vv{\mathcal V}
\def\bb{\mathcal B}
\def\g{\mathcal G}
\newcommand{\set}[1]{\left\lbrace #1\right\rbrace}
\providecommand{\abs}[1]{\left\lvert#1\right\rvert}
\newcommand{\remove}[1]{ }
\newcommand{\qtq}[1]{\quad\text{#1}\quad}
\newtheorem{theorem}{Theorem}[section]
\newtheorem{proposition}[theorem]{Proposition}%
\newtheorem{lemma}[theorem]{Lemma}
\newtheorem{corollary}[theorem]{Corollary}
\theoremstyle{definition}
\theoremstyle{remark}
\newtheorem*{remark}{Remark}
\numberwithin{equation}{section}
\begin{document}

%
%
%
\title[Bases which admit exactly two expansions]{Bases which admit exactly  two expansions}
\author{Yi Cai*}
\address{College of Sciences, Shanghai Institute of Technology, Shanghai 201418,
People's Republic of China}
\email{546823540@qq.com}
\author{Wenxia Li}
\address{School of Mathematical Sciences, Shanghai Key Laboratory of PMMP, East China Normal University, Shanghai 200062,
People's Republic of China}
\email{wxli@math.ecnu.edu.cn}
\subjclass[2010]{Primary 11A63; Secondary 37F20, 37B10}
\keywords{$q$-expansion; unique $q$-expansion; quasi-greedy $q$-expansion;  generalized golden ratio.}
\thanks{*Corresponding author}

\begin{abstract}
For a positive integer $m$ let $\Omega _m=\{0,1, \cdots , m\}$ and
 \begin{align*}
 \bb_2(m)=&\left \{q\in(1,m+1]: \text{$\exists\; x\in [0, m/(q-1)]$ has exactly }\right. \\
&\left. \text{two different $q$-expansions w.r.t. $\Omega _m$}\right \}.
 \end{align*}
Sidorov \cite{S} firstly studied the set $\bb_2(1)$ and raised some questions. Komornik and Kong \cite{KK} further studied the set $\bb_2(1)$ and answered partial  Sidorov's questions. In the present paper, we consider the set $\bb_2(m)$ for general
positive integer $m$ and generalise the results obtained by Komornik and Kong.
\end{abstract}

\maketitle
\section{Introduction}
Given a positive integer $m$ and a real number $q\in(1,m+1]$, a $q$-\emph{expansion} with respect to the digit set $\Omega _m=\{0,1,\cdots , m\}$ of a real number $x\in I_q:=[0,\frac{m}{q-1}]$ is a sequence $(c_i)\in\Omega_m^\mathbb N$ such that
\begin{equation*}
x=\sum_{i=1}^{\infty}\frac{c_i}{q^i}:=(c_i)_q.
\end{equation*}

 In the past years the following set has attracted much more attention
 \begin{align*}
 \bb_k(1)=&\left \{q\in(1,2]: \text{$\exists\; x\in I_q$ has exactly }\right. \\
&\left. \text{$k$ different $q$-expansions w.r.t. $\Omega _1$}\right \}.
 \end{align*}
Erd\H os et al. \cite{P1,P2,P3} discovered that $\bb_k(1)\ne \emptyset$ for each $k\in\mathbb N\cup\set{\aleph_0}$.
For $k\geq 3$ very little was known for $\bb_k(1)$. However, much information on $\bb_2(1)$ has been discovered.
In 2009, Sidorov obtained

Theorem A \cite{S}
\begin{enumerate}[\upshape (i)]
\item $q\in\bb_2(1)\iff1\in\uu_q-\uu_q$;
\item $\uu\subseteq \bb_2(1)$;
\item $[T,2]\subset\bb_2(1)$, where $T\approx1.83929$ denotes the Tribonacci number, i.e., the positive zero of $q^3-q^2-q-1$;
\item The smallest two elements of $\bb_2(1)$ are $q_s\approx1.71064$, the root of
\begin{equation*}
q^4-2q^2-q-1=0
\end{equation*}
and $q_f\approx1.75488$ the root of
\begin{equation*}
q^3-2q^2+q-1=0.
\end{equation*}
\end{enumerate}
He  also posed some questions in \cite{S}, some of which was solved by Komornik and Kong \cite{KK}. 

For $q\in(1,m+1]$ we are interested in the sets
 \begin{align*}
 \bb_2(m)=&\left \{q\in(1,m+1]: \text{$\exists\; x\in I_q$ has exactly }\right. \\
&\left. \text{two different $q$-expansions w.r.t. $\Omega _m$}\right \},
 \end{align*}
and for $i\ge0$
\begin{equation*}
\mathcal B_2^{(i+1)}(m)=\set{q\in (1,m+1]: \text{$q$ is an accumulation point of $B_2^{(i)}(m)$}}
\end{equation*}
where $\mathcal B_2^{(0)}(m)=\mathcal B_2(m)$. The following results are from \cite[Theorems 1.3, 1.5. 1.7, 1.9]{KK}.

{\it
(I) The following conditions are equivalent:
(A) $q\in\bb_2(1)$;
(B) $1\in\uu_q-\uu_q$;
(C) $1\in\overline{\uu_q}-\overline{\uu_q}$;
(D) $1\in\vv_q-\vv_q$ \;\textrm{and}\; $q\neq\g$.

(II) $\overline{\uu}\subset\bb^{(\infty)}_2(1)$,
 $\vv\setminus\set{\g}\subset\bb^{(2)}_2(1)$.

(III) $\min \bb_2^{(1)}(1) = \min \bb_2^{(2)}(1) = q_f$, where $q_f$ is the largest real root of $q^3-2q+q-1=0$.

(IV) Every set $\bb_2^{(i)}(1)$ has infinitely many accumulation points in each connected
component $(q_0,q_0^*)$  of $(1,2] \setminus\uu$.

(V) $\bb_2(1)\cap(1, q_{KL})$ contains only algebraic integers, and hence it is countable.

(VI) $\bb_2(1)$ has infinitely many isolated points in $(1, q_{KL})$, and they are dense in $\bb_2(1)\cap(1, q_{KL})$.

(VII) If  $\vv\cap(1, q_{KL})=\set{q_n:n=1,2,\cdots}$, then
$q_{j+1}\le \min \bb_2^{(2j)}(1)<q_{2j+1}$ for all $j\ge0$,
and hence $\min\bb_2^{(j)}(1)\nearrow\min \bb_2^{(\infty)}(1) = q_{KL}$ as $j\to\infty$.

(VIII) For each $j = 0, 1,\cdots$, $\bb_2^{(j)}(1)\cap(1, q_{KL})$ has infinitely many isolated points, and they
are dense in $\bb_2^{(j)}(1)\cap(1, q_{KL})$.

(IX) For any $q\in\bb_2(1)$ we have
\begin{equation*}
\lim_{\delta\to 0}\dim_H(\bb_2(1)\cap(q-\delta, q+\delta))\le2\dim_H \uu_q.
\end{equation*}
}

In this paper we generalise the results in \cite{KK} for general alphabet $\Omega_m$ (for (V), (VI), (VIII) and (IX), one can check the proofs are independent of the alphabet), the main difficulty is in Section 3,  the proofs in Section 5 also become more complicated. We obtain the following results.

\begin{theorem}\label{T3}
The following conditions are equivalent:
\begin{enumerate}[\upshape (i)]
\item $q\in\bb_2(m)$;
\item $1\in\uu_q-\uu_q$;
\item $1\in\overline{\uu_q}-\overline{\uu_q}$;
\item $1\in\vv_q-\vv_q$, $q\neq\g(m)$.
\end{enumerate}
\end{theorem}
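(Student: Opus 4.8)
The plan is to establish the equivalence by proving a cycle of implications, following the structure that Sidorov used for $m=1$ in Theorem A(i) but tracking where the larger alphabet $\Omega_m$ forces new arguments. The key objects are $\uu_q$, the set of sequences that are the \emph{unique} $q$-expansion of some $x$, its closure $\overline{\uu_q}$, and $\vv_q$, the set of quasi-greedy expansions (with $\g(m)$ the generalized golden ratio, the threshold below which $\uu_q$ is trivial). My first step is the implication (i)$\Rightarrow$(ii): if $x$ has exactly two $q$-expansions $(c_i)$ and $(d_i)$, I would look at the first index $n$ where they differ, say $c_n>d_n$, and split off the common prefix. The two tails must then be expansions of two points differing by the contribution of that digit discrepancy; because $x$ has \emph{exactly} two expansions, each tail must itself be a \emph{unique} expansion of its value (any further branching would manufacture a third expansion of $x$). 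Normalizing by powers of $q$ and by the digit shift, this yields two elements $u,u'\in\uu_q$ whose difference is exactly $1$, giving $1\in\uu_q-\uu_q$.

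For the reverse direction (ii)$\Rightarrow$(i), I would run this construction backward: given $(a_i),(b_i)\in\uu_q$ with $\sum a_i/q^i-\sum b_i/q^i=1$, I would build a point $x$ whose two expansions are obtained by prepending a suitable common block and then branching at one coordinate into the two unique tails. The fact that the tails are unique expansions guarantees $x$ has \emph{no more than} two expansions, while the difference being exactly $1$ (an integer realizable as a single digit step, which is where $m\ge1$ matters) guarantees there are exactly two and not fewer. The chain (ii)$\Rightarrow$(iii) is immediate since $\uu_q\subseteq\overline{\uu_q}$, so the substantive content is (iii)$\Rightarrow$(ii): I would need a closing/limiting argument showing that if $1$ is a difference of two elements of the closure, then it is already a difference of two genuine unique expansions. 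This typically uses the combinatorial/lexicographic characterization of $\overline{\uu_q}$ versus $\uu_q$ (sequences in the closure that fail to be unique expansions differ from unique ones only by countably many ``boundary'' sequences tied to the quasi-greedy expansion of $1$), so one argues that a boundary sequence can be perturbed within $\overline{\uu_q}$ to a true element of $\uu_q$ without changing the difference value $1$.

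Finally I would connect to (iv) via the relationship between $\vv_q$ and $\overline{\uu_q}$: it is standard that $\overline{\uu_q}$ and $\vv_q$ differ only by sequences governed by the quasi-greedy expansion $\alpha(q)$ of $1$, and the exceptional base is precisely $q=\g(m)$, the generalized golden ratio, where this correspondence degenerates. So I would show (iii)$\Leftrightarrow$(iv) by a careful case analysis: off the value $q=\g(m)$, realizing $1$ as a difference in $\vv_q-\vv_q$ is equivalent to realizing it in $\overline{\uu_q}-\overline{\uu_q}$, with the extra hypothesis $q\neq\g(m)$ excluding exactly the base where a spurious difference in $\vv_q-\vv_q$ arises that does \emph{not} descend to $\overline{\uu_q}$.

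I expect the main obstacle to be the general-alphabet version of the lexicographic bookkeeping in steps (iii)$\Rightarrow$(ii) and (iii)$\Leftrightarrow$(iv). For $m=1$ the digits are only $0$ and $1$, so the characterizations of $\uu_q$, $\overline{\uu_q}$, and $\vv_q$ in terms of $\alpha(q)$ and its reflection are comparatively rigid; for general $m$ the reflection map $c\mapsto m-c$ and the possible digit values in $\{0,1,\dots,m\}$ make the admissibility (lexicographic) conditions more delicate, and verifying that a closure element can be moved into $\uu_q$ while preserving the difference $1$, together with pinning down that $\g(m)$ is the \emph{only} exceptional base, is where the larger alphabet genuinely complicates the $m=1$ argument. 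This is presumably the ``main difficulty in Section 3'' the authors allude to, so I anticipate that the bulk of the work lies in the combinatorial lemmas characterizing these digit sequences rather than in the high-level implication structure.
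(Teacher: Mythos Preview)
Your treatment of (i)$\Leftrightarrow$(ii) matches the paper's, except that in (ii)$\Rightarrow$(i) you gloss over a point specific to $m\ge 2$: unique tails guarantee only that the constructed $x$ has no further expansions beginning with $0$ or $1$; one must still rule out expansions beginning with a digit $c\ge 2$. The paper handles this by observing that such an expansion would force $q\le 1+m/2\le\g(m)$, whence $\uu_q=\{0,m/(q-1)\}$ and $1\notin\uu_q-\uu_q$ after all.

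For (iii)$\Rightarrow$(ii) and (iv)$\Rightarrow$(ii) your perturbation plan is not the paper's route, and as stated it has a gap: perturbing one of the two closure elements into $\uu_q$ gives you no control over whether the \emph{other} element, shifted by exactly $1$, also lands in $\uu_q$. The paper sidesteps this entirely via the de~Vries--Komornik dichotomy: $\uu_q$ is already closed whenever $q\notin\overline{\uu}$, and $\uu_q=\vv_q$ whenever $q\notin\vv$. Hence (iii)$\Rightarrow$(ii) is automatic for bases outside $\overline{\uu}$, and (iv)$\Rightarrow$(ii) is automatic outside $\vv$. The exceptional bases $q\in\overline{\uu}$ and $q\in\vv\setminus\{\g(m)\}$ are then handled not by perturbation but by direct construction: for each such $q$ the paper writes down explicit pairs $\textnormal{c},\textnormal{d}\in\uu'_q$ (built from the periodic block determining $\alpha(q)$) satisfying $(1\textnormal{c})_q=(0\overline{\textnormal{d}})_q$, which shows $q\in\bb_2(m)$ outright. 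The exclusion of $\g(m)$ in (iv) falls out because $\uu_{\g(m)}$ is trivial. So the delicate general-alphabet combinatorics you anticipate does appear, but it lives in these explicit constructions showing $\overline{\uu}\subset\bb_2(m)$ and $\vv\setminus\{\g(m)\}\subset\bb_2(m)$, not in any perturbation argument on $\overline{\uu_q}$ or $\vv_q$. (A minor correction: the paper's $\vv_q$ is the set of reals with at most one doubly infinite expansion, not the set of quasi-greedy expansions.)
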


\begin{theorem}\label{T1}\mbox{}
\begin{enumerate}[\upshape (i)]
\item $\bb_2(m)$ is compact;
\item $\overline{\uu}\subset\bb^{(\infty)}_2(m)$;
\item $\vv\setminus\set{\g(m)}\subset\bb^{(2)}_2(m)$;
\item $\min \bb^{(1)}_2(m)=\min \bb^{(2)}_2(m)=q_f(m)$,  $q_f(m)$ is the largest real root of
\begin{equation*}
q^3-(k+2)q^2+q-k-1=0 \quad\text{if $m=2k+1$}
\end{equation*}
and
\begin{equation*}
q^2-(k+1)q-k=0 \quad\text{if $m=2k$}.
\end{equation*}
\end{enumerate}
\end{theorem}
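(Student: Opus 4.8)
The plan is to establish (i) and (ii) first, since the remaining parts build on them. For (i) I would use the equivalence (i)$\iff$(iii) of Theorem~\ref{T3}: $q\in\bb_2(m)$ precisely when $1\in\overline{\uu_q}-\overline{\uu_q}$. Boundedness is immediate, and since $\uu_q=\set{0,m/(q-1)}$ for $q\le\g(m)$ one has $\bb_2(m)\subseteq(\g(m),m+1]$, so $\bb_2(m)$ is bounded away from $1$. For closedness take $q_n\in\bb_2(m)$ with $q_n\to q^*$ and choose $x_n,y_n\in\overline{\uu_{q_n}}$ with $x_n-y_n=1$; representing them by digit sequences in $\Omega_m^{\mathbb N}$ and using compactness of $\Omega_m^{\mathbb N}$, I extract convergent subsequences with limits $a,b$, whose base-$q^*$ values satisfy $(a)_{q^*}-(b)_{q^*}=1$ by continuity of evaluation. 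The technical input is that membership in $\overline{\uu_q}$ is governed by non-strict lexicographic inequalities against the quasi-greedy expansion $\alpha(q)$ of $1$, and that $q\mapsto\alpha(q)$ is monotone with the appropriate one-sided continuity, so these inequalities pass to the limit and place $a,b$ in $\overline{\uu_{q^*}}$, giving $q^*\in\bb_2(m)$. For (ii), if $q\in\uu$ then $1\in\uu_q$ and $0\in\uu_q$, so $1=1-0\in\uu_q-\uu_q$ and Theorem~\ref{T3} yields $q\in\bb_2(m)$; hence by (i), $\overline{\uu}\subseteq\bb_2(m)$. As $\overline{\uu}$ is perfect, each of its points is a limit of other points of $\overline{\uu}\subseteq\bb_2(m)$, so $\overline{\uu}\subseteq\bb_2^{(1)}(m)$, and induction (using only perfectness and the previous inclusion) gives $\overline{\uu}\subseteq\bb_2^{(i)}(m)$ for every $i$, i.e. $\overline{\uu}\subseteq\bb_2^{(\infty)}(m)$.

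For (iii) I would split $\vv\setminus\set{\g(m)}$ into $\vv\cap\overline{\uu}$, where the conclusion is immediate from (ii), and $\vv\setminus\overline{\uu}$. The latter points are isolated, sitting to the left of the connected components of $(1,m+1]\setminus\overline{\uu}$, and for them $\alpha(q)$ is periodic. Exploiting this periodicity I would concatenate and reflect the period block to manufacture univoque sequences realizing $1\in\uu_{q'}-\uu_{q'}$ for bases $q'$ tending to $q$, thereby placing $q$ in $\bb_2^{(1)}(m)$, and then repeat the construction one level deeper (a nested family of such bases) to obtain $q\in\bb_2^{(2)}(m)$.

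For (iv) the upper bound $q_f(m)\in\bb_2^{(2)}(m)$ follows from (iii) once $q_f(m)\in\vv\setminus\set{\g(m)}$ is checked, and this I would do by computing the quasi-greedy expansion of $1$ at $q_f(m)$: a direct calculation gives $\alpha(q_f(m))=\overline{(k+1)(k+1)k\,k}$ for $m=2k+1$ and $\alpha(q_f(m))=\overline{(k+1)(k-1)}$ for $m=2k$, after which a finite periodic check confirms the defining reflection inequalities of $\vv$ and that $q_f(m)\neq\g(m)$. For the lower bound I must show that $\bb_2(m)\cap(1,q_f(m))$ has no accumulation point. Through the characterization $1\in\overline{\uu_q}-\overline{\uu_q}$ I would argue that, for $q<q_f(m)$, the smallness of $\alpha(q)$ relative to the two periodic words above forces any pair $x,y\in\overline{\uu_q}$ with $x-y=1$ into a rigid form, so the admissible $q$ solve finitely many algebraic equations and form a discrete set. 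Combined with $\bb_2^{(2)}(m)\subseteq\bb_2^{(1)}(m)$ and $q_f(m)\in\bb_2^{(2)}(m)$, this yields $\min\bb_2^{(1)}(m)=\min\bb_2^{(2)}(m)=q_f(m)$.

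The main obstacle is twofold: the accumulation construction in (iii) for the periodic endpoints and the discreteness analysis below $q_f(m)$ in (iv). Both are delicate lexicographic and combinatorial arguments, and both branch according to the parity of $m$; this parity split is precisely where the passage from $m=1$ to general $m$ demands genuinely new work.
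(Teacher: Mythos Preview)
Your overall architecture is sound, but there are two places where the plan diverges materially from what is needed.

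First, and most importantly, your sketch for (iii) underestimates the work. You correctly identify that for $q\in\vv\setminus(\overline{\uu}\cup\{\g(m)\})$ the expansion $\alpha(q)$ is periodic (Lemma~\ref{18}(iii)), and that one should perturb the period block to manufacture nearby points of $\bb_2(m)$. But you do not say how you will prove that the resulting bases actually converge to $q$. In the paper this is the role of the entire Section~3: one introduces the function $f_{\textnormal{c},\textnormal{d}}(t)=(1\textnormal{c})_t+(m\textnormal{d})_t-(m^\infty)_t$ for $\textnormal{c},\textnormal{d}\in A'_q$, shows via a lengthy case analysis (Lemmas~\ref{9}, \ref{L8}, \ref{8}) that whenever $f_{\textnormal{c},\textnormal{d}}(q)\le 0$ the function is strictly increasing on $[q,m+1]$ (Corollaries~\ref{c1}, \ref{c2}), and then uses this monotonicity (Lemma~\ref{10}) to conclude that $q_{\textnormal{c},\textnormal{d}_j}\searrow q$ as $\textnormal{d}_j\nearrow\textnormal{d}$. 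Without this or an equivalent device, the intermediate value theorem gives you \emph{some} zero of each perturbed equation in $(q,m+1]$, but not that these zeros approach $q$; uniqueness of the root is exactly what the monotonicity provides. This analysis is the paper's main technical contribution and is where the parity split you mention actually happens. Incidentally, the points of $\vv\setminus\overline{\uu}$ lie \emph{inside} each component $(p_0,p_0^*)$ of $(1,m+1]\setminus\overline{\uu}$, forming an increasing sequence converging to $p_0^*$; they are not ``to the left of'' the components.

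Second, your route to (i) and (ii) is inverted relative to the paper and carries a hidden cost. The paper first proves $\overline{\uu}\subset\bb_2(m)$ directly (Lemma~\ref{12}), by an explicit construction of $\textnormal{c},\textnormal{d}\in A'_q$ with $f_{\textnormal{c},\textnormal{d}}(q)=0$ for each $q\in\overline{\uu}\setminus\uu$, and only then proves compactness (Lemma~\ref{21}): if $q\notin\bb_2(m)$ then $q\notin\overline{\uu}$ by Lemma~\ref{12}, hence $\uu_q$ is closed (Lemma~\ref{19}), and one concludes via the Hausdorff-metric continuity of $p\mapsto\uu_p$ on $(1,m+1]\setminus\overline{\uu}$ from~\cite{CK}. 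Your direct sequential argument for (i) via $1\in\overline{\uu_q}-\overline{\uu_q}$ is plausible but the step ``these inequalities pass to the limit'' needs care: $q\mapsto\alpha(q)$ is only left-continuous, so for $q_n\searrow q^*$ the limiting bound is $\beta(q^*)$ rather than $\alpha(q^*)$, and you must argue separately that this still places the limit sequence in $\overline{\uu_{q^*}}'$ (or work with $\vv_q$ and condition~(iv) of Theorem~\ref{T3} instead). For (iv) your plan is fine in outline; the paper simply cites \cite[Propositions~3.5 and~4.9]{KLZ} for the finiteness of $\bb_2(m)\cap(1,q_f(m))$ rather than reproving it.
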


The rest of the paper is arranged as follows. In Section 2 we recall some known results from unique expansions. Some results will be proved in section 3, which will be used for the proofs of main theorems. The proofs of main theorems are arranged in section 4. The final section  is devoted to the detailed description of unique expansions.

\section{Preliminaries}

In this section, we introduce some notation and list some important results. The \emph{greedy} $q$-expansion of $x\in I_q$ is the largest $q$-expansion in lexicographical order. The \emph{quasi-greedy} $q$-expansion of  $x\in I_q$  is the largest \emph{infinite} $q$-expansion in lexicographical order. A sequence is \emph{infinite} means that it does not have a last nonzero digit. Otherwise it is called \emph{finite}. 


We endow $\Omega_m^\mathbb N$ with the metric $D(\cdot,\cdot)$ defined as follows:
 \begin{align*}
 D((a_i),(b_i))=\left \{
 \begin{array}{cl}
 (m+1)^{-\min \{k: a_k\ne b_k\}}, & \text{if $(a_i)\neq(b_i)$} \\
 0, & \text{if $(a_i)=(b_i)$. }
 \end{array}
 \right.
 \end{align*}

Many works were devoted to the following sets
\begin{equation}\label{uqm}
\uu_q=\set{x\in I_q:\text{$x$ has a unique $q$-expansion w.r.t. $\Omega _m$}},
\end{equation}
and
\begin{equation*}
\vv_q=\set{x\in I_q:\text{$x$ has at most one doubly infinite $q$-expansion}}.
\end{equation*}
We call a $q$-expansion $(a_i)$ w.r.t. $\Omega _m$ is \emph{doubly infinite} if both $(a_i)$ and its \emph{reflection} $\overline{(a_i)}=(m-a_i)$ are infinite. The following two subsets of $(1, m+1]$ are very important
\begin{align*}
&\uu=\set{q\in(1,m+1]:1\in\uu_q},\\
&\vv=\set{q\in(1,m+1]:1\in\vv_q}.
\end{align*}
Clearly we have $\uu_q\subseteq \vv_q$ and $\uu\subseteq \vv$.

When $m=1$, Glendinning and Sidorov \cite{SN2} showed that $\uu_q$ is countable for $q\in(\g,q_{KL})$  and uncountable with positive Hausdorff
dimension for $q\in(q_{KL},2]$, where $\g =(1+\sqrt{5})/2$ is the \emph{golden radio}, $q_{KL}$ know as the \emph{Komornik-Loreti} constant (see \cite{KL0}).
Komornik and Loreti \cite{KL1}, de Vries, Komornik and Loreti \cite{VKL} proved that $\uu$ is closed from above but not from below, and its closure $\overline{\uu}$ is a Cantor set. Komornik and Loreti \cite{KL0,KL2} proved that  $\uu$ has a smallest element $q_{KL}$ known as the Komornik-Loreti constant. Kong and Li \cite{KL}, Komornik, Kong and Li \cite{KKL} determined the Hausdorff dimensions of $\uu_q$.
Furthermore $(1,m+1]\setminus\overline{\uu}=\cup(p_0,p_0^*)$, where $p_0$ runs over $\set{1}\cup(\overline{\uu}\setminus\uu)$ and $p_0^*$ runs over a proper subset of $\overline{\uu}$, and $\vv\cap(p_0,p_0^*)=\set{q_\ell:\ell=1,2\cdots}$ is a strictly increasing sequence converging to $p_0^*$. Especially, $\vv\cap(1,q_{KL})$ has a smallest element $\g(m)$ known as the \emph{generalized golden ratio}, which was given by Baker \cite{B}. More exactly,
\begin{equation}\label{Gm}
\begin{split}
\mathcal G(m)
=\left \{
\begin{array}{ll}
 k+1, & \text{if}\; m=2k, k=1,2,\cdots   \\
 \frac{k+1+\sqrt{k^{2}+6k+5}}{2}, & \text{if}\; m=2k+1, k=0,1,\cdots  \\
\end{array}
\right.
\end{split}
\end{equation}
and
\begin{equation}\label{KL}
\begin{split}
\alpha(q_{KL})
=\left \{
\begin{array}{ll}
 (k+\tau_i-\tau_{i-1}), & \text{if}\; m=2k, k=1,2,\cdots   \\
 (k+\tau_i), &\text{if}\; m=2k+1, k=0,1,\cdots  \\
\end{array}
\right.
\end{split}
\end{equation}
where $\alpha(q_{KL})$ is the quasi-greedy $q$-expansion of 1 and $(\tau_i)$ denotes the classical Thue-Morse sequence.

This generalized golden ratio $\g(m)$ plays an important part in the sense that  for $q \in (1,\g(m))$ every $x \in (0,\frac{m}{q-1})$ has uncountable $q$-expansions, and for $q \in (\g(m),m+1]$ there exists $x \in (0,\frac{m}{q-1})$ that has a unique $q$-expansion. Thus   $\uu_q=\set{0,\frac{m}{q-1}}$ for all $q\in(1,\g(m)]$ (see, e.g., \cite{P3,SV}).
In the whole paper, we always use $\alpha (q)=(\alpha_i)$ and $\beta (q)=(\beta_i)$ to denote the quasi-greedy and greedy $q$-expansions of $1$ respectively. The following property of quasi-greedy expansion of $1$ is related to Parry's work \cite{P} (see also \cite{BK,DK}).
\begin{lemma}\label{16}
The map $q\mapsto \alpha(q)$ is a strictly increasing bijection from $(1,m+1]$ onto the set of all infinite sequences $(\alpha _i)$ satisfying the inequality
\begin{equation*}
\alpha_{n+1}\alpha_{n+2}\cdots\le \alpha_1\alpha_2\cdots  \quad\text{for all $n\ge 0$}.
\end{equation*}
Moreover, the map $q\mapsto \alpha(q)$ is continuous from the left.
\end{lemma}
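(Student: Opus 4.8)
The plan is to recast the statement in terms of the value functional and the remainders of the quasi-greedy expansion, following Parry \cite{P} (see also \cite{BK,DK}). Write $\alpha(q)=(\alpha_i)$, let $\sigma$ be the shift on $\Omega_m^{\mathbb N}$, and for $n\ge0$ put $r_n=\sum_{j\ge1}\alpha_{n+j}q^{-j}$, so that $r_0=1$ and $qr_n=\alpha_{n+1}+r_{n+1}$. That each $\alpha(q)$ already satisfies the admissibility inequality $\sigma^n\alpha\le\alpha$ is the classical half: the maximality of the quasi-greedy (largest infinite) expansion gives $0<r_n\le1$, since if some $r_n>1$ one could raise the $n$-th digit and still complete the expansion to an infinite one, contradicting maximality; and, as $\sigma^n\alpha$ is an infinite expansion of $r_n\le1$, monotonicity of the quasi-greedy expansion in the represented value yields $\sigma^n\alpha\le\alpha(q)=\alpha$. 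Conversely, to an infinite admissible $\alpha$ I attach $q(\alpha)$, the unique root in $(1,m+1]$ of $\sum_i\alpha_iq^{-i}=1$; it exists because $q\mapsto\sum_i\alpha_iq^{-i}$ is continuous and strictly decreasing, blows up as $q\downarrow1$ (the digit sum diverges, $\alpha$ being infinite), and is $\le1$ at $q=m+1$.

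The arithmetic heart, and the step I expect to be the main obstacle, is to show that \emph{admissibility by itself} forces the remainder bound $r_n\le1$ in the base $q=q(\alpha)$. I would prove this by an excess-amplification argument. Call a sequence $c$ \emph{$\alpha$-admissible} if $\sigma^ic\le\alpha$ for every $i\ge0$, and suppose some $\alpha$-admissible $c$ had value $1+e$ with $e>0$. Then $c\le\alpha$, their first disagreement occurs at some $k\ge1$ with $c_k\le\alpha_k-1$, and expanding $\sum_{i\ge k}(\alpha_i-c_i)q^{-i}=\operatorname{val}(\alpha)-\operatorname{val}(c)=-e$ gives $\operatorname{val}(\sigma^kc)\ge1+q^ke$. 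Since $\sigma^kc$ is again $\alpha$-admissible, iterating produces $\alpha$-admissible sequences whose excess over $1$ is multiplied by at least $q>1$ at each step, contradicting the uniform bound $\operatorname{val}(\cdot)\le m/(q-1)$. Applying this to $c=\sigma^n\alpha$---which is $\alpha$-admissible precisely by the hypothesis---gives $r_n=\operatorname{val}(\sigma^n\alpha)\le1$.

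With $r_n\le1$ the rest is bookkeeping. \emph{Surjectivity}: if some infinite expansion $(b_i)$ of $1$ in base $q(\alpha)$ exceeded $\alpha$, with first disagreement at $k$ (so $b_k\ge\alpha_k+1$), then equating $\sum_{i\ge k}b_iq^{-i}=\sum_{i\ge k}\alpha_iq^{-i}$ and using $\sum_{i>k}\alpha_iq^{-i}=q^{-k}r_k\le q^{-k}$ forces $b_k=\alpha_k+1$, $r_k=1$ and $b_i=0$ for $i>k$, i.e.\ $b$ is finite---impossible; hence $\alpha$ is the largest infinite expansion of $1$, so $\alpha=\alpha(q(\alpha))$. \emph{Monotonicity}: for admissible $\alpha<\alpha'$ with first disagreement at $k$, assume for contradiction $q:=q(\alpha)\ge q':=q(\alpha')$; then $\sum_i\alpha'_iq^{-i}\le\sum_i\alpha'_i(q')^{-i}=1=\sum_i\alpha_iq^{-i}$, and the same first-disagreement estimate, now fed by $r_k\le1$ for $\alpha$ in base $q$, forces $\alpha'$ to end in zeros, contradicting that $\alpha'$ is infinite. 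Thus $q(\cdot)$ is strictly increasing, its inverse $q\mapsto\alpha(q)$ is a strictly increasing bijection onto the infinite admissible sequences, and injectivity is immediate.

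For left-continuity I would take $q_j\uparrow q$. By monotonicity the $\alpha(q_j)$ increase lexicographically and stay $\le\alpha(q)$; as each coordinate is then eventually constant, they converge in the metric $D$ to some $\beta^*$ with $\alpha(q_j)\le\beta^*\le\alpha(q)$. Letting $j\to\infty$ in $\sum_i\alpha(q_j)_iq_j^{-i}=1$ (dominated convergence, the digits being $\le m$) gives $\sum_i\beta^*_iq^{-i}=1$, and the nonstrict inequalities $\sigma^n\alpha(q_j)\le\alpha(q_j)$ pass to the coordinatewise limit, so $\beta^*$ is admissible. The one point needing care is that $\beta^*$ is \emph{infinite}: were it finite, each infinite $\alpha(q_j)$ would agree with $\beta^*$ on an arbitrarily long prefix ending in zeros while still carrying a later nonzero digit, giving $\alpha(q_j)>\beta^*$ and contradicting $\alpha(q_j)\le\beta^*$. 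Being infinite, admissible and a root of $\sum_i\beta^*_iq^{-i}=1$, uniqueness in the bijection forces $\beta^*=\alpha(q)$; hence $\alpha(q_j)\to\alpha(q)$, which is exactly left-continuity. (Right-continuity fails in general---the greedy expansion may be finite---which is why only the left version is asserted.)
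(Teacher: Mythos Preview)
The paper does not prove this lemma; it is stated as a known result attributed to Parry \cite{P} (with pointers to \cite{BK,DK}) and used without proof throughout. Your proof plan is a correct, self-contained reconstruction of the classical argument: the remainder characterisation $r_n\in(0,1]$ of the quasi-greedy expansion, the inverse map $\alpha\mapsto q(\alpha)$ as the unique root of $\sum_i\alpha_iq^{-i}=1$, and---the one genuinely nontrivial step---your excess-amplification argument showing that admissibility alone forces $r_n\le1$. That step, together with the first-disagreement estimates you use for surjectivity and strict monotonicity, is exactly how the cited sources proceed. Your handling of left-continuity (coordinatewise limit of a lexicographically increasing sequence, dominated convergence for the value, preservation of the non-strict admissibility inequalities, and the check that the limit remains infinite) is also standard and correct. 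Since the paper offers no proof of its own here, there is nothing to contrast; your argument would serve as a written-out proof of the cited lemma.
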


Similarly, combing Lemma 2.1 and Proposition 2.5 of \cite{BK} we have

\begin{lemma}\label{161}
The map $q\mapsto \beta(q)$ is a strictly increasing bijection from $(1,m+1)$ onto the set of all sequences $(\beta _i)$ satisfying the inequality
\begin{equation*}
\beta_{n+1}\beta_{n+2}\cdots <\beta_1\beta_2\cdots  \quad\text{for all $n\ge1$},
\end{equation*}
Moreover, the map $q\mapsto \beta(q)$ is continuous from the right.
\end{lemma}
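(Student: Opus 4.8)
The plan is to deduce the greedy statement from Lemma~\ref{16}, since that lemma already packages the difficult monotonicity and bijectivity for the quasi-greedy expansion, by means of the standard dictionary relating $\beta(q)$ to $\alpha(q)$. The point is that $\alpha(q)$ is the largest \emph{infinite} expansion of $1$, whereas $\beta(q)$ is the largest expansion of $1$; hence $\beta(q)=\alpha(q)$ whenever $\beta(q)$ is infinite, while if $\beta(q)=\beta_1\cdots\beta_k0^\infty$ is finite with $\beta_k\ge1$ then $\alpha(q)=(\beta_1\cdots\beta_{k-1}(\beta_k-1))^\infty$. Conversely $\beta(q)$ is read off from $\alpha(q)$: it equals $\alpha(q)$ unless $\alpha(q)$ is periodic of this special form, in which case it is the associated finite sequence. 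I would first record these identities with care, since they drive the whole argument.

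The first task is to transport the Parry characterisation. By Lemma~\ref{16} the image of $\alpha$ consists exactly of the infinite sequences with $\alpha_{n+1}\alpha_{n+2}\cdots\le\alpha_1\alpha_2\cdots$ for all $n$. Such a sequence either satisfies every one of these inequalities strictly, in which case it coincides with its own greedy expansion and already obeys the strict condition above, or it attains equality for some $n$ and hence has a periodic tail; the dictionary then turns it into a finite sequence $\beta_1\cdots\beta_k0^\infty$, which one checks satisfies the strict inequalities. This yields a bijection between the admissible quasi-greedy sequences and the admissible greedy sequences, and composing it with the bijection of Lemma~\ref{16} shows that $q\mapsto\beta(q)$ is a bijection onto the sequences of the statement. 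Strict monotonicity is inherited at once, because the dictionary preserves the lexicographic order and $q\mapsto\alpha(q)$ is strictly increasing; injectivity is in any case clear, since $q$ is the unique root of $\sum_i\beta_i(q)q^{-i}=1$.

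It remains to treat continuity, where the asymmetry is the heart of the matter. The two maps disagree precisely at those bases $q_0$ for which $\beta(q_0)$ is finite, and there $\beta(q_0)=\beta_1\cdots\beta_k0^\infty$ while $\alpha(q_0)=(\beta_1\cdots\beta_{k-1}(\beta_k-1))^\infty$. For $q$ just below $q_0$ the greedy expansion of $1$ follows the quasi-greedy pattern and has left limit $\alpha(q_0)$, whereas for $q$ just above $q_0$ it snaps to the finite shape and has right limit $\beta(q_0)$; in this way the left-continuity of $\alpha$ furnished by Lemma~\ref{16} becomes right-continuity of $\beta$. I would make this precise by feeding the explicit form of the dictionary at the jump points into the known left-continuity of $\alpha$.

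The step I expect to be the main obstacle is exactly this continuity flip --- checking rigorously that the jump of $\beta$ at a finite-expansion base falls on the right-hand side, so that $\beta$ is continuous from the right with left limit $\alpha(q_0)$. A more computational alternative bypasses the dictionary and runs the greedy algorithm directly: with $r_n=1-\sum_{i\le n}\beta_i q^{-i}$ one has $\beta_n=\min(\lfloor q^n r_{n-1}\rfloor,m)$, and for $q<q'$ with equal leading digits the strict inequality $r_n(q)<r_n(q')$ (which uses $\beta_1\ge1$) forces $\beta_n(q)\le\beta_n(q')$, whence injectivity gives strict monotonicity; right-continuity follows because at a base with $r_k(q_0)=0$ the first index at which $\beta(q)$ departs from $\beta(q_0)$ is pushed to infinity as $q\downarrow q_0$. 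In that route the delicate point is surjectivity instead, namely showing that the root of $\sum_i\beta_i q^{-i}=1$ actually reproduces $(\beta_i)$, where the strict Parry condition must be converted into the greedy bounds $0\le q^n r_n<1$. In either approach the single degenerate sequence $10^\infty$, whose quasi-greedy partner $0^\infty$ is not a legitimate expansion, corresponds to the excluded endpoint $q=1$ and is set aside.
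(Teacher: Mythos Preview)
The paper does not supply its own argument for this lemma: immediately before the statement it writes ``Similarly, combining Lemma~2.1 and Proposition~2.5 of \cite{BK} we have'', and then states the lemma without proof. In other words, the paper's ``proof'' is a citation to Baiocchi--Komornik, invoking precisely the quasi-greedy Lemma~\ref{16} together with a companion result from \cite{BK}.

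Your plan is therefore not merely compatible with the paper's approach---it \emph{is} the paper's approach, fleshed out. Deducing the greedy characterisation from Lemma~\ref{16} via the standard dictionary $\beta(q)\leftrightarrow\alpha(q)$ (equal when $\beta(q)$ is infinite; $\beta(q)=\beta_1\cdots\beta_k0^\infty$ corresponding to $\alpha(q)=(\beta_1\cdots\beta_{k-1}(\beta_k-1))^\infty$ otherwise) is exactly how one combines the quoted results of \cite{BK}. Your identification of the continuity flip---left-continuity of $\alpha$ translating into right-continuity of $\beta$ at the finite-expansion bases, with the other side inheriting continuity from the equality $\alpha=\beta$ elsewhere---is the correct mechanism and the only place that needs care. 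The alternative direct greedy-algorithm computation you sketch is also standard and would work; since the paper simply cites \cite{BK}, either route is acceptable and neither requires further justification here.
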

We remark that $\beta(m+1)=m^\infty $.
Let $\uu'_q$ be the set of
the corresponding $q$-expansions of all elements in $\uu_q$ defined by (\ref{uqm}). We recall the following characterization of unique expansion \cite{BK}.
\begin{lemma}\label{L22}
Let $q\in(1,m+1]$, then $(c_i)\in\uu'_q$ if and only if
\begin{equation}\label{uicodedes}
\begin{split}
&c_{n+1}c_{n+2}\cdots<\alpha_1(q)\alpha_2(q)\cdots  \quad\text{whenever $c_n<m$} \\
&\overline{c_{n+1}c_{n+2}\cdots}<\alpha_1(q)\alpha_2(q)\cdots  \quad\text{whenever $c_n>0$ }.
\end{split}
\end{equation}
\end{lemma}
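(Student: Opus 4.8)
The plan is to first reduce the statement to an elementary condition on the \emph{values} of the tails $(c_{n+1}c_{n+2}\cdots)_q$, and only afterwards translate that value condition into the lexicographic condition involving $\alpha(q)$ via Lemma \ref{16}. Throughout I write $\prec,\preceq$ for the lexicographic order to keep it distinct from inequalities between values.

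For the reduction, I would characterise when $x:=(c_i)_q$ admits a second expansion by checking whether a ``carry'' is available at some position. Suppose $(d_i)$ is another expansion of $x$ and let $N$ be the first index with $d_N\ne c_N$. Comparing $\sum_{i\ge N}c_i/q^i=\sum_{i\ge N}d_i/q^i$ and using $0\le d_i\le m$, one finds that $d_N>c_N$ forces $c_N<m$ and $(c_{N+1}c_{N+2}\cdots)_q\ge1$, whereas $d_N<c_N$ forces $c_N>0$ and $(\overline{c_{N+1}c_{N+2}\cdots})_q\ge1$. Conversely, whenever $c_N<m$ and $(c_{N+1}c_{N+2}\cdots)_q\ge1$ I can exhibit a genuinely different expansion by replacing $c_N$ with $c_N+1$ and re-expanding the tail as any representation of $(c_{N+1}c_{N+2}\cdots)_q-1\in[0,m/(q-1)]$; the dual construction (decreasing $c_N$ by $1$ and raising the tail value by $1$) works when $c_N>0$ and $(\overline{c_{N+1}c_{N+2}\cdots})_q\ge1$. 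This yields the equivalent value formulation: $(c_i)\in\uu'_q$ iff for every $n$ one has $c_n<m\Rightarrow(c_{n+1}c_{n+2}\cdots)_q<1$ and $c_n>0\Rightarrow(\overline{c_{n+1}c_{n+2}\cdots})_q<1$.

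It then remains to show that, for a tail $s$, the value inequality $(s)_q<1$ matches the lexicographic inequality $s\prec\alpha(q)$. One implication is clean and pointwise: if $s\succeq\alpha(q)$, then using $\alpha_{n+1}\alpha_{n+2}\cdots\preceq\alpha_1\alpha_2\cdots$ and the fact that the tails of the quasi-greedy expansion of $1$ have value in $(0,1]$ (both from Lemma \ref{16}), one obtains $(s)_q\ge1$; contrapositively $(s)_q<1\Rightarrow s\prec\alpha(q)$. This already converts the value conditions into the lexicographic conditions \eqref{uicodedes}. The reflection $(c_i)\mapsto(\overline{c_i})$, which sends $x$ to $m/(q-1)-x$, reverses the lexicographic order and interchanges the two families of conditions, so the ``$c_n>0$'' (lazy-type) condition follows from the ``$c_n<m$'' (greedy-type) condition by symmetry, and I would not prove it separately.

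The main obstacle is the reverse implication $s\prec\alpha(q)\Rightarrow(s)_q<1$, which is \emph{false} pointwise: a tail such as $s=0\,m^\infty$ satisfies $s\prec\alpha(q)$ yet may have $(s)_q\ge1$. It becomes valid only because every further tail of $s$ also obeys the lexicographic inequality at its drop positions, i.e. $s$ is itself greedy-admissible and hence equals the greedy expansion of $(s)_q$, where the greedy map is order-preserving. The truly delicate point is the borderline value $(s)_q=1$, which must be excluded using that $\alpha(q)$ is the quasi-greedy (largest \emph{infinite}) expansion of $1$, as opposed to the greedy expansion $\beta(q)$ when the latter is finite. This is precisely where the choice of $\alpha(q)$ rather than $\beta(q)$ and the strictness of the inequalities in \eqref{uicodedes} are essential, and it is the step I expect to demand the most care.
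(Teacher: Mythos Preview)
The paper does not prove Lemma~\ref{L22}; it is quoted verbatim as a known characterization from Baiocchi--Komornik~\cite{BK}. So there is no in-paper argument to compare against. Your outline is essentially the standard proof one finds in that reference (and, in earlier forms, in Parry's work): reduce uniqueness to the two families of value inequalities $(c_{n+1}c_{n+2}\cdots)_q<1$ and $(\overline{c_{n+1}c_{n+2}\cdots})_q<1$, then translate each into a lexicographic inequality against $\alpha(q)$. Your Step~1 and the implication $(s)_q<1\Rightarrow s\prec\alpha(q)$ via the tail bound $(\alpha_{k+1}\alpha_{k+2}\cdots)_q\le1$ are both correct.

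One caution on the remaining direction. Your plan is ``the first family makes $s$ greedy-admissible, hence $s$ is the greedy expansion of $(s)_q$, and the greedy map is order-preserving''. Be careful that ``greedy-admissible'' here must mean the condition with $\alpha(q)$, not with $\beta(q)$: the weaker condition $\sigma^k s\prec\beta(q)$ at all drop positions does \emph{not} force $s$ to be greedy. At the golden ratio with $m=1$ the sequence $(10)^\infty$ satisfies every inequality $\sigma^k s\prec\beta(q)=110^\infty$, yet it has value $1$ and is not the greedy expansion of $1$. It is exactly the strict inequality against $\alpha(q)$ that excludes these tails, so your instinct about the borderline $(s)_q=1$ and the role of $\alpha$ versus $\beta$ is right. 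The clean way to close the argument is an iteration: if $c_N<m$ and $(c_{N+1}c_{N+2}\cdots)_q\ge1$ while $c_{N+1}c_{N+2}\cdots\prec\alpha(q)$, take the first index $k$ with $c_{N+k}<\alpha_k$, note $c_{N+k}<m$, and compute that $(c_{N+k+1}c_{N+k+2}\cdots)_q\ge q^k\bigl((c_{N+1}\cdots)_q-1\bigr)+(\alpha_{k+1}\alpha_{k+2}\cdots)_q>1$; the excess over $1$ then grows at least geometrically along the iterates, contradicting the bound $m/(q-1)$. This is the content behind ``the greedy map is order-preserving'' and disposes of the $(s)_q=1$ case without a separate analysis.
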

In fact, it is easy to check that  (\ref{uicodedes}) is equivalent to
\begin{equation}\label{uicodedes1}
\begin{split}
&c_{k+1}c_{k+2}\cdots<\alpha_1(q)\alpha_2(q)\cdots ,\;\textrm{when}\; c_1\cdots c_k\ne m^k \\
&\overline{c_{k+1}c_{k+2}\cdots}<\alpha_1(q)\alpha_2(q)\cdots ,\; \textrm{when}\; c_1\cdots c_k\ne 0^k.
\end{split}
\end{equation}

Komornik and Loreti in \cite{VKL} provided useful characterizations of $\mathcal U, \overline{\mathcal U}$ and $ \mathcal V$ (see
\cite[Theorem 2.5, 3.9, Lemma 3.15]{VKL}.
\begin{lemma}\label{17}\mbox{}
\begin{enumerate}[\upshape (i)]
\item $q\in\uu\setminus\set{m+1}$ if and only if $\alpha(q)=(\alpha_i(q))$ satisfies
\begin{equation*}
\overline{\alpha(q)}<\alpha_{n+1}(q)\alpha_{n+2}(q)\cdots< \alpha(q)  \quad\text{for all $n\ge1$}.
\end{equation*}
\item $q\in\overline{\uu}$ if and only if $\alpha(q)=(\alpha_i(q))$ satisfies
\begin{equation*}
\overline{\alpha(q)}<\alpha_{n+1}(q)\alpha_{n+2}(q)\cdots\le \alpha(q)  \quad\text{for all $n\ge1$}.
\end{equation*}
\item $q\in\vv$ if and only if $\alpha(q)=(\alpha_i(q))$ satisfies
\begin{equation}\label{15}
\overline{\alpha(q)}\le\alpha_{n+1}(q)\alpha_{n+2}(q)\cdots\le \alpha(q)  \quad\text{for all $n\ge1$}.
\end{equation}
\end{enumerate}
\end{lemma}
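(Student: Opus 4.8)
The plan is to translate each membership condition on $q$ into a condition on the single sequence $\alpha(q)=(\alpha_i)$, and then read off the stated inequalities from the unique-expansion criterion of Lemma \ref{L22} together with Parry's admissibility from Lemma \ref{16}. Throughout I write $\sigma$ for the left shift, so that Lemma \ref{16} says exactly $\sigma^n\alpha(q)\le\alpha(q)$ for all $n\ge0$, and I use repeatedly that reflection commutes with the shift and reverses the lexicographic order.

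For part (i) the first step is the reduction $q\in\uu\iff\alpha(q)\in\uu'_q$, valid for every $q\in(1,m+1]$. Indeed $\alpha(q)$ is by definition an infinite expansion of $1$; if $1$ has a unique expansion it must equal $\alpha(q)$, so $\alpha(q)\in\uu'_q$, and conversely if $\alpha(q)\in\uu'_q$ then $\alpha(q)$ is the only expansion of $(\alpha(q))_q=1$. Applying the equivalent form (\ref{uicodedes1}) of Lemma \ref{L22} to $(c_i)=\alpha(q)$ gives
\begin{equation*}
\sigma^k\alpha(q)<\alpha(q)\ \text{when}\ \alpha_1\cdots\alpha_k\neq m^k,\qquad \overline{\sigma^k\alpha(q)}<\alpha(q)\ \text{when}\ \alpha_1\cdots\alpha_k\neq 0^k.
\end{equation*}
Since $q>1$ forces $\alpha_1\ge1$, the hypothesis $\alpha_1\cdots\alpha_k\neq0^k$ holds for every $k\ge1$, so the second family already yields $\overline{\sigma^k\alpha(q)}<\alpha(q)$, equivalently, after reflecting, the strict lower bound $\overline{\alpha(q)}<\sigma^k\alpha(q)$ for all $k\ge1$. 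The only gap is the strict upper bound when $\alpha_1\cdots\alpha_k=m^k$, and this is exactly where $q\neq m+1$ enters: for $q\neq m+1$ we have $\alpha(q)\neq m^\infty$, so there is a largest $j$ with $\alpha_1\cdots\alpha_j=m^j$ and $\alpha_{j+1}<m$. For $1\le k\le j$ a direct comparison of $\sigma^k\alpha(q)=m^{\,j-k}\alpha_{j+1}\alpha_{j+2}\cdots$ with $\alpha(q)=m^{\,j}\alpha_{j+1}\cdots$ shows that at position $j-k+1$ the digit $\alpha_{j+1}<m$ sits against an $m$, whence $\sigma^k\alpha(q)<\alpha(q)$; for $k>j$ the first family applies directly. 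This gives the unconditional strict inequalities, and the reverse implication is immediate because the conditions of Lemma \ref{L22} are weaker than the stated ones. (At $q=m+1$ one has $\alpha(m+1)=m^\infty\in\uu'_{m+1}$, so $m+1\in\uu$, yet the strict upper bound fails; this is precisely why $m+1$ is excluded in the statement.)

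For part (iii) note first that the upper inequality in (\ref{15}) is automatic from Lemma \ref{16}, so only the lower inequality $\overline{\alpha(q)}\le\sigma^n\alpha(q)$ carries content; I would obtain it by characterising the doubly infinite expansions of $1$ and showing that $1\in\vv_q$ fails precisely when $\sigma^n\alpha(q)<\overline{\alpha(q)}$ for some $n$, in which case two distinct doubly infinite expansions of $1$ can be produced. Part (ii) is then the intermediate case: if the upper inequality is everywhere strict we are in $\uu$ by part (i), whereas if $\sigma^n\alpha(q)=\alpha(q)$ for some $n$ (so $q\notin\uu$) while the strict lower bound persists, I would exhibit $q$ as a limit of bases in $\uu$. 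Concretely, writing such an $\alpha(q)$ as eventually periodic and perturbing its period downward produces admissible sequences that satisfy the strict conditions of (i) and converge to $\alpha(q)$; by the bijectivity and one-sided continuity in Lemma \ref{16} the associated bases lie in $\uu$ and converge to $q$, giving $q\in\overline{\uu}$, and conversely passing to a limit along a sequence in $\uu$ preserves the non-strict upper and strict lower inequalities.

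The step I expect to be genuinely delicate is not part (i)---there the boundary analysis of the leading block of $m$'s is routine once $q\neq m+1$ is invoked---but the approximation argument in part (ii): producing, for a base $q$ with $\sigma^n\alpha(q)=\alpha(q)$, an explicit admissible perturbation of $\alpha(q)$ that satisfies both strict inequalities of (i) and tends to $\alpha(q)$, while verifying that the strict lower bound survives in the limit, is where the combinatorics of admissible sequences must be handled with care. Part (iii) requires the separate, and somewhat technical, bookkeeping of the doubly infinite expansions of $1$.
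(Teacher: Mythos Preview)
The paper does not prove this lemma at all: it is quoted verbatim from \cite[Theorem 2.5, 3.9, Lemma 3.15]{VKL}, so there is no ``paper's own proof'' to compare your attempt against.

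On your attempt itself: your argument for part (i) is complete and correct. The reduction $q\in\uu\iff\alpha(q)\in\uu'_q$ is valid, the application of \eqref{uicodedes1} is clean, and your handling of the boundary case $\alpha_1\cdots\alpha_k=m^k$ via the maximal $m$-prefix is exactly the right patch (with $j=0$ interpreted as the empty prefix when $\alpha_1<m$). The exclusion of $q=m+1$ is used precisely where it must be.

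Parts (ii) and (iii), however, are not proofs but outlines. For (iii) you correctly observe that the upper inequality is free from Lemma~\ref{16}, but the claim that failure of the lower inequality produces two distinct doubly infinite expansions of $1$ needs an explicit construction; the converse direction (that the inequalities force $1\in\vv_q$) also requires showing that any second doubly infinite expansion of $1$ would violate the lower bound, which is a separate argument you have not given. For (ii) your approximation strategy is the standard one in \cite{VKL}, but the perturbation you describe (``perturbing its period downward'') must be made concrete and checked against both strict inequalities of (i), and the converse direction (a limit of points in $\uu$ satisfies the stated non-strict/strict inequalities) needs the left-continuity of $q\mapsto\alpha(q)$ together with care about how strictness of the \emph{lower} bound survives the limit. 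None of this is wrong in spirit, but as written it is a plan rather than a proof; to make it self-contained you would need roughly the level of detail found in the cited source.
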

Hence, $\mathcal U\subseteq  \overline{\mathcal U}\subseteq  \mathcal V$. The following lemma comes from \cite[Theorem 1.2, Lemmas 3.11, 3.14]{VKL}. For a finite word $a_1\cdots a_{n-1}a_n\in \Omega ^n_m$ define
\begin{align*}
&a_1\cdots a_{n-1}a_n^+=a_1\cdots a_{n-1}(a_n+1)\qtq{if} a_n<m,\\
&a_1\cdots a_{n-1}a_n^-=a_1\cdots a_{n-1}(a_n-1)\qtq{if} a_n>0.
\end{align*}
\begin{lemma}\label{18}\mbox{}
\begin{enumerate}[\upshape (i)]
\item Every $q\in\overline{\uu}\setminus\uu$ there exists a sequence $(q_n)\in\uu$ satisfying $(q_n)\nearrow q$ as $n\rightarrow\infty$.
\item Every $q\in\overline{\uu}\setminus\uu$ the quasi-greedy expansion $\alpha(q)$ is periodic.
\item Every $q\in\vv\setminus(\overline{\uu}\cup\set{\g(m)})$ there exists a word $a_1\cdots a_n$ with $n\ge1$ such that
\begin{equation*}
\alpha(q)=(a_1\cdots a_{n-1}a_n^+\overline{a_1\cdots a_{n-1} a_n^+})^\infty
\end{equation*}
where $(a_1\cdots a_n)^\infty$ satisfies \eqref{15}.
\end{enumerate}
\end{lemma}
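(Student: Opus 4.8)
The plan is to read all three assertions off the lexicographic characterizations of $\uu$, $\overline{\uu}$ and $\vv$ in Lemma \ref{17}. Set $\alpha=\alpha(q)=(\alpha_i)$, write $\sigma$ for the shift so that $\sigma^n\alpha=\alpha_{n+1}\alpha_{n+2}\cdots$, and abbreviate $\overline\alpha=\overline{\alpha(q)}$; recall from Lemma \ref{16} that $\sigma^n\alpha\le\alpha$ for all $n\ge1$, and that reflection reverses the lexicographic order, so this is equivalent to $\sigma^n\overline\alpha\ge\overline\alpha$. I would dispose of (ii) first, as it is immediate and makes (i) concrete. Let $q\in\overline{\uu}\setminus\uu$. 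By Lemma \ref{17}(ii) we have $\overline\alpha<\sigma^n\alpha\le\alpha$ for all $n\ge1$, whereas $q\notin\uu$ says that the two-sided strict bound of Lemma \ref{17}(i) fails for some $n$. Since the left inequality is already strict and $\sigma^n\alpha\le\alpha$ always holds, the only possible failure is $\sigma^n\alpha=\alpha$ for some $n\ge1$, i.e. $\alpha_{n+k}=\alpha_k$ for all $k$; hence $\alpha$ is periodic, which is (ii).

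For (i) the cleanest route uses the fact recalled in Section 2 that $\uu$ is closed from above. Since $\overline{\uu}$ is the closure of $\uu$ and $q\in\overline{\uu}$, there is a sequence $(q_j)\subset\uu$ with $q_j\to q$, and $q_j\ne q$ because $q\notin\uu$. If only finitely many $q_j$ were less than $q$, the remaining terms would give a sequence in $\uu$ strictly decreasing to $q$, forcing $q\in\uu$ by closedness from above, a contradiction; hence infinitely many $q_j$ lie below $q$, and extracting a monotone subsequence yields $(q_n)\subset\uu$ with $q_n\nearrow q$. (The mechanism behind ``closed from above'' is the explicit construction of unique-expansion sequences increasing to the periodic $\alpha=\alpha(q)$; one must append a suitably reflected continuation—a single reflection $w^N(\overline w)^\infty$ already fails the admissibility of Lemma \ref{17}(i)—but this is precisely the content encapsulated by that property and by Lemma \ref{16}.)

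Now (iii). Let $q\in\vv\setminus(\overline{\uu}\cup\set{\g(m)})$. By Lemma \ref{17}(iii), $\overline\alpha\le\sigma^n\alpha\le\alpha$ for all $n\ge1$, while $q\notin\overline{\uu}$ negates Lemma \ref{17}(ii); as $\sigma^n\alpha\le\alpha$ always holds, the failure must be $\sigma^n\alpha\le\overline\alpha$ for some $n$, and combined with the lower bound this forces $\sigma^n\alpha=\overline\alpha$. Let $n\ge1$ be the least such index. Then $\alpha_{n+i}=\overline{\alpha_i}$ for all $i$, and applying the relation once more gives $\sigma^{2n}\alpha=\alpha$; hence $\alpha$ is periodic with period $u\overline u$, where $u=\alpha_1\cdots\alpha_n$, i.e. $\alpha=(u\overline u)^\infty$. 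One first checks $\alpha_n\ge1$: if $\alpha_n=0$ (necessarily $n\ge2$), the $\vv$-inequality at shift $n-1$ together with $\alpha_{n+i}=\overline{\alpha_i}$ forces $\alpha_1=\cdots=\alpha_{n-1}=m$, and the resulting $\alpha=(m^{n-1}0^n m)^\infty$ is not quasi-greedy (the shift realigning the block $m^n$ exceeds $\alpha$), contradicting Lemma \ref{16}. Thus we may set $a_i=\alpha_i$ for $i<n$ and $a_n=\alpha_n-1\ge0$, so that $u=a_1\cdots a_{n-1}a_n^+$ and $\alpha=(a_1\cdots a_{n-1}a_n^+\,\overline{a_1\cdots a_{n-1}a_n^+})^\infty$.

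It remains to verify that $(a_1\cdots a_n)^\infty$ satisfies \eqref{15}, and this is the main obstacle. The upper bound $\sigma^j(a_1\cdots a_n)^\infty\le(a_1\cdots a_n)^\infty$ and the lower bound $\overline{(a_1\cdots a_n)^\infty}\le\sigma^j(a_1\cdots a_n)^\infty$ must be extracted from the inequalities already known for $\alpha=(u\overline u)^\infty$, using the minimality of $n$ to control the finitely many residues of $j$ modulo $n$ and the fact that $a_1\cdots a_n$ differs from $u$ only in its last, decremented symbol. The hypothesis $q\ne\g(m)$ enters exactly here: for $q=\g(m)$ the sequence $\alpha$ has the same reflective shape (for instance $k^\infty$ when $m=2k$, cf. \eqref{Gm}), yet the decremented period fails \eqref{15}—when $m=2k$ one would need $(k-1)^\infty\ge\overline{(k-1)^\infty}=(k+1)^\infty$, which is false—so excluding $\g(m)$ is precisely what guarantees admissibility of the decremented word.
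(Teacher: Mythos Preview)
The paper does not prove this lemma at all; it simply quotes it from \cite[Theorem 1.2, Lemmas 3.11, 3.14]{VKL}. So there is no in-paper argument to compare against, only the question of whether your sketch stands on its own.

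Parts (i) and (ii) are fine. Your derivation of periodicity in (ii) from Lemma~\ref{17} is exactly the right manoeuvre, and for (i) the appeal to ``$\uu$ is closed from above'' (recorded in Section~2) together with the monotone-subsequence extraction is clean and correct; the parenthetical about how one would explicitly build approximants in $\uu$ is then superfluous.

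Part (iii) has a genuine gap. You correctly obtain $\alpha(q)=(u\overline u)^\infty$ with $u=a_1\cdots a_{n-1}a_n^+$, but then you yourself flag the remaining verification---that $(a_1\cdots a_n)^\infty$ satisfies \eqref{15}---as ``the main obstacle'' and do not carry it out. Gesturing at ``minimality of $n$'' and ``the decremented last symbol'' is not a proof: this step is exactly the combinatorial content of \cite[Lemmas~3.11, 3.14]{VKL}, and in particular you have not shown \emph{where} in the inequality chain the hypothesis $q\ne\g(m)$ is actually consumed---you have only illustrated that \eqref{15} fails at $\g(m)$. A smaller point: your argument for $\alpha_n\ge1$ is slightly tangled. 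The comparison you set up, $\overline\alpha\le\sigma^{n-1}\alpha=0\,\overline\alpha$, already forces $\overline\alpha=0^\infty$, i.e.\ $q=m+1\in\overline\uu$, which is the contradiction; there is no consistent intermediate $\alpha=(m^{n-1}0^nm)^\infty$ to test against quasi-greediness.
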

The following lemma is from \cite[Theorems 1.4, 1.5]{VK} which is helpful in our proof.
\begin{lemma}\label{19}\mbox{}
\begin{enumerate}[\upshape (i)]
\item $\uu_q$ is closed if and only if $q\in(1,m+1]\setminus\overline{\uu}$.
\item $\uu_q=\overline{\uu_q}=\vv_q$ if and only if $q\in(1,m+1]\setminus\vv$.
\end{enumerate}
\end{lemma}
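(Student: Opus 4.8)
The plan is to transport both topological assertions about $\uu_q\subseteq I_q$ into the compact sequence space $(\Omega_m^{\mathbb N},D)$ and then read them off from the combinatorial descriptions of $\uu,\overline{\uu},\vv$ in Lemma~\ref{17}. Write $\pi\colon\Omega_m^{\mathbb N}\to I_q$, $\pi((c_i))=(c_i)_q$, which is continuous and surjective. First I would record that $\uu'_q=\pi^{-1}(\uu_q)$, since any $\pi$-preimage of a point with a unique expansion is that very expansion. Because $\pi$ is continuous and $\Omega_m^{\mathbb N}$ is compact, this gives at once \emph{$\uu_q$ is closed $\iff$ $\uu'_q$ is closed}: the preimage of a closed set is closed, and the continuous image of the compact set $\uu'_q$ is closed. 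Thus everything reduces to deciding closedness of $\uu'_q$, which by \eqref{uicodedes1} is cut out of $\Omega_m^{\mathbb N}$ by the strict inequalities $c_{k+1}c_{k+2}\cdots<\alpha(q)$ (when $c_1\cdots c_k\ne m^k$) and $\overline{c_{k+1}c_{k+2}\cdots}<\alpha(q)$ (when $c_1\cdots c_k\ne 0^k$).

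For part (i), note first that since $\alpha(q)$ is quasi-greedy, Lemma~\ref{16} makes the upper bounds $\alpha_{n+1}(q)\alpha_{n+2}(q)\cdots\le\alpha(q)$ automatic, so in Lemma~\ref{17} membership in $\uu,\overline{\uu},\vv$ is governed entirely by the lower bounds involving $\overline{\alpha(q)}$; in particular $q\notin\overline{\uu}$ amounts to the existence of $N\ge1$ with $\alpha_{N+1}(q)\alpha_{N+2}(q)\cdots\le\overline{\alpha(q)}$. Passing to the limit in \eqref{uicodedes1} (lexicographic $\le$ is closed while $<$ is not) shows that any $c\in\overline{\uu'_q}\setminus\uu'_q$ must turn one strict inequality into an equality, i.e. it has a tail $c_{k+1}c_{k+2}\cdots=\alpha(q)$ with $c_1\cdots c_k\ne m^k$, or the reflected analogue; applying the surviving $\le$-inequalities to that tail forces $\alpha(q)$ to satisfy the two-sided bound of Lemma~\ref{17}(iii), i.e. $q\in\vv$. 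This already yields the easy half ``$q\notin\vv\Rightarrow\uu_q$ closed''. To reach the sharper ``$q\notin\overline{\uu}\Rightarrow\uu_q$ closed'', I would feed the equal tail into the inequality at the shift $N$ above to obtain both $\alpha_{N+1}(q)\cdots\le\overline{\alpha(q)}$ and $\ge\overline{\alpha(q)}$, forcing $\alpha_{N+1}(q)\cdots=\overline{\alpha(q)}$; the only bases for which such equalities are consistent lie in $\vv\setminus\overline{\uu}$, whose quasi-greedy expansion has the explicit periodic form of Lemma~\ref{18}(iii), and for these one checks directly that no admissible unique expansion accumulates at the offending tail, so $\uu'_q$ stays closed.

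Conversely, for $q\in\overline{\uu}$ I would exhibit a point of $\overline{\uu_q}\setminus\uu_q$ by producing a sequence in $\uu'_q$ converging in $(\Omega_m^{\mathbb N},D)$ to a sequence with a degenerate (equality) tail, which therefore has two expansions. Here Lemma~\ref{17}(ii) supplies the strict gaps $\overline{\alpha(q)}<\alpha_{n+1}(q)\alpha_{n+2}(q)\cdots\le\alpha(q)$ for all $n\ge1$. When $q\in\overline{\uu}\setminus\uu$, $\alpha(q)$ is periodic (Lemma~\ref{18}(ii)) and there are bases $q_n\nearrow q$ in $\uu$ (Lemma~\ref{18}(i)); using the left-continuity of $q\mapsto\alpha(q)$ (Lemma~\ref{16}), the sequences $w\,\alpha(q_n)$ for a suitable word $w\ne m^{\abs{w}}$ are admissible for $q$ (their strict shifts lie between $\overline{\alpha(q)}$ and $\alpha(q)$) and converge to $w\,\alpha(q)$, whose tail equals $\alpha(q)$, so it lies in $\overline{\uu'_q}\setminus\uu'_q$. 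When $q\in\uu$ itself, $\alpha(q)$ need not be periodic and no such approximating bases are available, so one instead approximates a degenerate sequence by truncating its tail and repairing admissibility, the repairs being possible exactly because of the strict gaps above. Carrying out this construction, with the two clauses of \eqref{uicodedes1} (for digits $<m$ and $>0$) balanced simultaneously, is the main obstacle of the whole lemma.

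Part (ii) then follows quickly. One always has $\uu_q\subseteq\overline{\uu_q}$ and $\uu_q\subseteq\vv_q$. If $q\notin\vv$, then $q\notin\overline{\uu}$, so $\uu_q=\overline{\uu_q}$ by part (i); moreover negating Lemma~\ref{17}(iii) produces $N$ with the strict gap $\overline{\alpha_{N+1}(q)\alpha_{N+2}(q)\cdots}>\alpha(q)$, and inserting this gap into the non-strict (quasi-greedy) description of $\vv_q$ upgrades every $\le$ to $<$, giving $\vv_q\subseteq\uu_q$ and hence $\uu_q=\overline{\uu_q}=\vv_q$. For the converse, suppose $q\in\vv$, so $1\in\vv_q$ by definition: if $q\in\uu$ then $q\in\overline{\uu}$ and part (i) gives $\uu_q\ne\overline{\uu_q}$, while if $q\in\vv\setminus\uu$ then $1\in\vv_q\setminus\uu_q$, so $\uu_q\ne\vv_q$; either way the three sets are not all equal. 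The only auxiliary input is the symbolic description of $\vv_q$ used above, namely the non-strict analogue of Lemma~\ref{L22} for quasi-greedy expansions, which I would record separately or deduce from the definition of doubly infinite expansions.
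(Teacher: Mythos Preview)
The paper does not prove this lemma at all: immediately before the statement it writes ``The following lemma is from \cite[Theorems 1.4, 1.5]{VK}'' and simply quotes the result from de Vries and Komornik without argument. There is therefore no ``paper's own proof'' to compare your proposal against.

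As for your sketch itself, the overall strategy (push the question into the sequence space via $\pi$, use the lexicographic description \eqref{uicodedes1} of $\uu'_q$, and contrast it with the characterizations of $\uu,\overline{\uu},\vv$ in Lemma~\ref{17}) is the right one and is essentially how de Vries and Komornik argue in \cite{VK}. But your write-up is explicitly a ``Proof idea'' and leaves genuine gaps. In the direction $q\notin\overline{\uu}\Rightarrow\uu'_q$ closed, the case $q\in\vv\setminus\overline{\uu}$ is dismissed with ``one checks directly that no admissible unique expansion accumulates at the offending tail''; this is precisely the nontrivial verification, since for such $q$ the quasi-greedy expansion $\alpha(q)$ \emph{does} satisfy the weak inequalities of Lemma~\ref{17}(iii), and one must show that no sequence in $\uu'_q$ can carry the tail $\alpha(q)$ while still satisfying the strict inequalities at earlier positions. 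In the converse direction for $q\in\uu$ you yourself flag that the truncation-and-repair construction is ``the main obstacle of the whole lemma'' and do not carry it out. So the proposal has the correct architecture but is not a proof; if you want a self-contained argument you should consult \cite{VK} directly, where both constructions are done in full.
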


\section{$B_{2}(m)$ with multiple alphabet}
 In this section we give a description of $\mathcal B_{2}(m)$. We first establish an  elementary lemma.
\begin{lemma}\label{L1}
Let $q\in(1,m+1]$. If $x\in  I_q=[0,\frac{m}{q-1}]$ has two different $q$-expansions $(a_{i})$ and $(b_{i})$ w.r.t. $\Omega_{m}$ satisfying
\begin{equation*}
a_{1}\cdots a_{k-1}=b_{1}\cdots b_{k-1}\qtq{and}b_{k}=a_{k}+d, d\geq2.
\end{equation*}
Then $x$ has the $q$-expansions starting with $a_{1} \cdots a _{k-1}(a_{k}+n)$, for all $1\le n \le d-1$. 
\end{lemma}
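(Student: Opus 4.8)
The plan is to reduce the problem to a single elementary fact: every point of $I_q=[0,\frac{m}{q-1}]$ admits at least one $q$-expansion, which holds precisely because $q\le m+1$. First I would exploit that $(a_i)$ and $(b_i)$ represent the same $x$ and share the prefix $a_1\cdots a_{k-1}$; subtracting the common initial terms shows that their tails from position $k$ coincide. Writing
\[
y:=\sum_{i=k}^{\infty}\frac{a_i}{q^i}=\sum_{i=k}^{\infty}\frac{b_i}{q^i},
\]
the task of producing a $q$-expansion of $x$ beginning with $a_1\cdots a_{k-1}(a_k+n)$ amounts to finding digits $(c_i)_{i>k}\in\Omega_m$ with
\[
\sum_{i=k+1}^{\infty}\frac{c_i}{q^i}=y-\frac{a_k+n}{q^k}=:z_n,
\]
since the first $k-1$ terms together with this tail then reconstruct $x$ exactly. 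Such digits exist if and only if $z_n$ lies in the range $[0,\frac{m}{q^k(q-1)}]$ of all achievable tail values $\sum_{i>k}c_iq^{-i}$ with $c_i\in\Omega_m$.

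Next I would establish this two-sided bound, which is the heart of the argument, by squeezing $z_n$ between the two given expansions. For the lower bound I use $(b_i)$: since $b_k=a_k+d$ and its remaining digits contribute a nonnegative amount, $y\ge\frac{a_k+d}{q^k}$, so that $z_n\ge\frac{d-n}{q^k}>0$ because $n\le d-1$. For the upper bound I use $(a_i)$: its digits beyond position $k$ contribute at most $\frac{m}{q^k(q-1)}$, whence $y\le\frac{a_k}{q^k}+\frac{m}{q^k(q-1)}$, and therefore $z_n\le\frac{m}{q^k(q-1)}-\frac{n}{q^k}\le\frac{m}{q^k(q-1)}$ using $n\ge1$. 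Thus $z_n\in\bigl(0,\frac{m}{q^k(q-1)}\bigr]$ for every $1\le n\le d-1$.

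Finally I would conclude by rescaling: $q^kz_n\in(0,\frac{m}{q-1}]\subseteq I_q$, and since $q\le m+1$ this number has a $q$-expansion; shifting it back by $k$ places supplies the required digits $(c_i)_{i>k}$. Prepending $a_1\cdots a_{k-1}(a_k+n)$ then yields a genuine $q$-expansion of $x$ with the prescribed initial block, for each $n$ in the stated range. The argument involves no delicate estimate beyond the elementary squeeze just described; the only point one must not overlook is that representability of an arbitrary point of $I_q$ relies on the hypothesis $q\le m+1$, which is exactly the standing assumption $q\in(1,m+1]$.
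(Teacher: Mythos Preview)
Your proof is correct and follows essentially the same approach as the paper's: both arguments show that the residual $x-(a_1\cdots a_{k-1}(a_k+n)0^\infty)_q$ lies in $[0,\tfrac{m}{q^k(q-1)}]$ by squeezing it between the $(b_i)$-expansion (lower bound) and the $(a_i)$-expansion (upper bound), then invoke the fact that every point of $I_q$ has a $q$-expansion to produce the required tail. The only difference is cosmetic---you first subtract off the common prefix and work with $y$ and $z_n$, whereas the paper keeps $x$ and compares directly with $(a_1\cdots a_{k-1}(a_k+n)0^\infty)_q$ and $(a_1\cdots a_{k-1}(a_k+n)m^\infty)_q$.
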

\begin{proof}
Indeed, we have
\begin{equation*}
(a_{1} \cdots a _{k-1}(a_{k}+n)0^{\infty})_{q}<(b_{1}b _{2}\cdots)_q=x
\end{equation*}
and
\begin{equation*}
(a_{1} \cdots a _{k-1}(a_{k}+n)m^{\infty})_{q}>(a_{1}a _{2}\cdots)_q=x.
\end{equation*}
Hence $q^{k}(x-(a_{1} \cdots a _{k-1}(a_{k}+n)0^{\infty})_{q}) \in [0,\frac{m}{q-1}]$, and there exists a sequence $c_{k+1}c_{k+2}\cdots \in \Omega^{\mathbb N}_{m}$ such that
$q^{k}(x-(a_{1} \cdots a _{k-1}(a_{k}+n)0^{\infty})_{q})=(c_{k+1}c_{k+2}\cdots)_q,$ which is equivalent to $(a_{1} \cdots a _{k-1}(a_{k}+n)c_{k+1}c_{k+2}\cdots)_{q}\\
=x$.
\end{proof}
 We deduce from the preceding lemma a corollary as follows.
\begin{corollary}\label{L2}
Let $q\in(1,m+1]$. If $x$ has exactly two different $q$-expansions $(a_{i})$ and $(b_{i})$ w.r.t. $\Omega_{m}$ satisfying
\begin{equation*}
a_{1} \cdots a _{k-1}=b_{1} \cdots b _{k-1}\;\;\; and\; \;\; b_{k} > a_{k},
\end{equation*}
then $b_{k}=a_{k}+1.$
\end{corollary}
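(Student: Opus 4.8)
The plan is to argue by contradiction directly from Lemma~\ref{L1}, so that no new construction is needed. Suppose, contrary to the claim, that $b_k = a_k + d$ with $d \geq 2$. The goal is to manufacture a \emph{third} $q$-expansion of $x$, which contradicts the standing hypothesis that $x$ has exactly two.

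First I would observe that the hypotheses of Lemma~\ref{L1} are satisfied verbatim: the expansions $(a_i)$ and $(b_i)$ agree on the first $k-1$ digits and $b_k = a_k + d$ with $d \geq 2$. That lemma then supplies, for every $n$ with $1 \le n \le d-1$, a $q$-expansion of $x$ beginning with the block $a_1 \cdots a_{k-1}(a_k+n)$. Taking $n = 1$—which is admissible precisely because $d \geq 2$—yields a $q$-expansion of $x$ whose $k$-th digit equals $a_k + 1$ while its first $k-1$ digits coincide with $a_1 \cdots a_{k-1}$.

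Next I would verify that this expansion is distinct from both given ones. All three share the prefix $a_1 \cdots a_{k-1}$, so it suffices to compare the $k$-th digit. The new expansion differs from $(a_i)$ since $a_k + 1 \neq a_k$, and it differs from $(b_i)$ since $a_k + 1 \neq a_k + d$ when $d \geq 2$. Hence $x$ would have at least three distinct $q$-expansions, a contradiction. Therefore $d = 1$, i.e.\ $b_k = a_k + 1$. There is essentially no hard step here: the substantive content is already carried out in Lemma~\ref{L1}, and the only point demanding care is confirming that the constructed expansion is genuinely different from the two given ones, which reduces to the two elementary inequalities $a_k + 1 \neq a_k$ and $a_k + 1 \neq a_k + d$.
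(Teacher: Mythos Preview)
Your proof is correct and follows exactly the argument the paper intends: the paper states this result as an immediate corollary of Lemma~\ref{L1} without writing out any details, and your contradiction argument via a third expansion starting with $a_1\cdots a_{k-1}(a_k+1)$ is precisely the intended deduction.
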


\begin{theorem}\label{T4}
For $q\in(1,m+1]$,  $q\in \bb_{2}(m)$ if and only if $1\in\uu_{q}-\uu_{q}.$
\end{theorem}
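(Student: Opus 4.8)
The plan is to prove both implications directly from the definitions, using Corollary~\ref{L2} for the forward direction and an explicit single-digit branching construction for the converse.

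For the forward direction, suppose $q\in\bb_2(m)$ and let $x$ have exactly two $q$-expansions $(a_i)$ and $(b_i)$, with $(b_i)$ lexicographically larger, and let $k$ be the first index at which they differ. Applying Corollary~\ref{L2} gives $b_k=a_k+1$. Writing $w=(a_{k+1}a_{k+2}\cdots)_q$ and $w'=(b_{k+1}b_{k+2}\cdots)_q$ and comparing the two expansions of $x$ beyond position $k$ yields $w-w'=1$. The key point I would stress is that both tails lie in $\uu_q$: if, say, $w'$ admitted a second expansion, substituting it for $b_{k+1}b_{k+2}\cdots$ would produce a third expansion of $x$ (distinct from $(b_i)$ in the tail and from $(a_i)$ already at position $k$, since it keeps the digit $b_k=a_k+1$), contradicting exactness. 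Hence $1=w-w'\in\uu_q-\uu_q$.

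For the converse, assume $1\in\uu_q-\uu_q$, so there are $u,v\in\uu_q$ with $u-v=1$; let $(s_i)$ and $(t_i)$ be their unique expansions. I would set $x=u/q$, which carries the two distinct expansions $0\,s_1s_2\cdots$ and $1\,t_1t_2\cdots$, the identity $\tfrac{0+u}{q}=\tfrac{1+v}{q}$ being exactly $u=v+1$. To show these are the \emph{only} expansions, note that any expansion of $x$ has first digit $j$ with $u-j\in[0,\tfrac{m}{q-1}]$; uniqueness of the expansions of $u$ and of $v$ forces the continuations after first digits $0$ and $1$ to be precisely the two expansions above, so exactly two expansions result provided no first digit $j\ge 2$ is admissible.

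The entire converse therefore hinges on ruling out $j=2$, i.e.\ on showing $u<2$, and this is the step I expect to be the main obstacle since it is where the generalized golden ratio enters. First I would show that $1\in\uu_q-\uu_q$ forces $q>\g(m)$: for $q\in(1,\g(m)]$ one has $\uu_q=\{0,\tfrac{m}{q-1}\}$, so $\uu_q-\uu_q=\{0,\pm\tfrac{m}{q-1}\}$, and since $\tfrac{m}{q-1}=1$ only when $q=m+1>\g(m)$, the value $1$ simply cannot occur there. Then, using that $q\mapsto\tfrac{m}{q-1}$ is strictly decreasing, I would verify from the explicit formula \eqref{Gm} that $\tfrac{m}{\g(m)-1}\le 2$ — which reduces to the elementary inequality $\g(m)\ge\tfrac{m+2}{2}$ (equality $\g(m)=k+1$ when $m=2k$, and $\sqrt{k^2+6k+5}\ge k+2$ when $m=2k+1$). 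Consequently $q>\g(m)$ gives $u\le\tfrac{m}{q-1}<2$, the admissible first digits are exactly $\{0,1\}$, and $x$ has exactly two expansions, so $q\in\bb_2(m)$.
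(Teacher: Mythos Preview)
Your proof is correct and follows essentially the same route as the paper's: the forward direction uses Corollary~\ref{L2} and the uniqueness of the tails exactly as the paper does, and your converse constructs the same point $x=u/q$ with the two expansions $0s_1s_2\cdots$ and $1t_1t_2\cdots$, ruling out a first digit $\ge 2$ via the inequality $\g(m)\ge (m+2)/2$ (which the paper invokes as $q\le 1+m/2\le\g(m)$ by \eqref{Gm}). The only cosmetic difference is that you argue directly ($q>\g(m)\Rightarrow u<2$) while the paper phrases the same step by contradiction.
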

\begin{proof}
It suffices to take $q\in (\mathcal G(m),m+1]$, because $\uu_{q}=\left \{0, \frac{m}{q-1}\right \}$ if $1<q\leq \mathcal G(m)$.

If $q\in\mathcal B_{2}(m),$ then there exists $x \in (0,\frac{m}{q-1})$ having exactly two $q$-expansions $(a_{i})$ and $(b_{i})$ w.r.t.$\Omega _m$.  Suppose that $a_{1}\cdots a _{k-1}=b_{1}\cdots b _{k-1}$ and $b_{k}=a_{k}+1$ for some $k\geq1$ by Corollary \ref{L2}. Thus, the equality $x=(a_i)_q=(b_i)_q$ implies
$$
(0a_{k+1}a_{k+2}\cdots )_q=(1b_{k+1}b_{k+2}\cdots )_q
$$
and so
$$
1=(a_{k+1}a_{k+2}\cdots )_q-(b_{k+1}b_{k+2}\cdots )_q.
$$
By the assumption we have $(a_{k+1}a_{k+2}\cdots )_q, (b_{k+1}b_{k+2}\cdots )_q\in \uu_{q}$.


 Conversely, if $1 \in \uu_{q}-\uu_{q},$ then there exist $(c_{i})$ and $(d_{i})\in\uu_{q}'$ such that
 $$
 1+(c_{i})_{q}=(d_{i})_{q}, \;i.e., \;x:=(0d_{1}d_{2}\cdots)_{q}=(1c_{1}c_{2}\cdots)_{q}.
  $$
 Thus $x$ has no more $q$-expansions w.r.t. $\Omega _m$ starting with $0$ or $1$. On the other hand,
 we claim that any $q$-expansion w.r.t. $\Omega _m$ of $x$ can not  start with $2\leq c\leq  m$. Otherwise
 $$
 \frac{c}{q}\le x=(0d_{1}d_{2}\cdots)_{q}\le \frac{m}{q(q-1)}
 $$
 which leads to
 $2\le c\le\frac{m}{q-1}$. Hence  $q\le 1+\frac{m}{2}\le \mathcal G(m)$ by (\ref{Gm}). However, $\uu_q=\set{0,\frac{m}{q-1}}$ for all $q\in (1,\g(m)]$, which leads to $1\notin \uu_{q}-\uu_{q}$.
\end{proof}
Now we give a equivalent characterization of Theorem \ref{T4}.
\begin{lemma}\label{3}
For $q \in (1,m+1]$, $q\in \mathcal B_{2}(m)$ if and only if there exist two sequences $(c_{i}), (d_{i}) \in \mathcal U^{\prime}_{q}$ satisfying the equality
\begin{equation*}
((n+1)(c_{i}))_{q}=(n(d_{i}))_{q}
\end{equation*}
for all $n=0,1, \cdots , m-1$.
\end{lemma}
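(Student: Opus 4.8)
The plan is to reduce the stated equivalence directly to Theorem~\ref{T4}, the point being that the displayed equality, although it appears to depend on $n$, is in fact equivalent to a single $n$-free relation between $(c_i)_q$ and $(d_i)_q$.

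First I would unwind the notation: for any admissible leading digit $a\in\Omega_m$ and any sequence $(e_i)$ one has $(a(e_i))_q=a/q+(e_i)_q/q$. Applying this to both sides of $((n+1)(c_i))_q=(n(d_i))_q$ turns it into $(n+1)/q+(c_i)_q/q=n/q+(d_i)_q/q$; multiplying by $q$ and cancelling the common summand $n$ collapses it to $1+(c_i)_q=(d_i)_q$, i.e. $(d_i)_q-(c_i)_q=1$, with no trace of $n$ remaining. Hence, for a fixed pair $(c_i),(d_i)$, the identity holds for \emph{some} $n\in\{0,\dots,m-1\}$ if and only if it holds for \emph{all} of them, and in either case it amounts to the existence of $(c_i),(d_i)\in\uu'_q$ with $(d_i)_q-(c_i)_q=1$, that is, to $1\in\uu_q-\uu_q$.

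With this reduction in hand both directions are immediate. If $q\in\bb_2(m)$, then Theorem~\ref{T4} gives $1\in\uu_q-\uu_q$, so there are unique expansions $(c_i),(d_i)\in\uu'_q$ with $(d_i)_q-(c_i)_q=1$, and the reduction then yields $((n+1)(c_i))_q=(n(d_i))_q$ for every $n=0,\dots,m-1$. Conversely, if such $(c_i),(d_i)\in\uu'_q$ satisfy the identity for all $n$, then taking $n=0$ already gives $(d_i)_q-(c_i)_q=1$, hence $1\in\uu_q-\uu_q$ and $q\in\bb_2(m)$ by Theorem~\ref{T4}.

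I expect no genuine obstacle here: the entire content is the algebraic cancellation removing the apparent $n$-dependence, after which the claim is a restatement of Theorem~\ref{T4}. The only point worth a line of care is the elementary bookkeeping that both $n$ and $n+1$ are legitimate digits of $\Omega_m$ exactly when $0\le n\le m-1$, which is precisely the range quantified over; recording the identity for all these $n$ at once is what makes the lemma convenient for later use, since it simultaneously exhibits $m$ distinct numbers $x_n=(n(d_i))_q$, each carrying two $q$-expansions with the shared tails $(c_i)$ and $(d_i)$.
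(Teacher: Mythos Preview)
Your proposal is correct and matches the paper's treatment: the paper states this lemma immediately after Theorem~\ref{T4} as ``an equivalent characterization'' and gives no separate proof, precisely because the displayed identity collapses, after the cancellation you carry out, to $1+(c_i)_q=(d_i)_q$, i.e.\ $1\in\uu_q-\uu_q$. Your write-up just makes explicit the one-line algebra the paper leaves implicit.
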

 For $q \in (1,m+1]$ we set
\begin{equation*}
 A'_{q}:=\set{(c_{i})\in \mathcal U^{\prime}_{q}:0\le c_{1}<\alpha_1(q)}.
\end{equation*}
 By the definition of $A'_{q}$ every sequence $(c_{i}) \in A'_{q}$ satisfies (cf. \cite{BK})
\begin{equation*}
c_{i+1}c_{i+2}\cdots<\alpha(q)
\end{equation*}
for all $i\geq0$ by (\ref{uicodedes1}). Hence
\begin{equation}\label{e30}
\mathcal U^{\prime}_{q}=\bigcup_{\textnormal{c}\in A^{\prime}_{q}}\{\textnormal{c},\overline{\textnormal{c}}\}.
\end{equation}
 Indeed, this holds for $q\in (1, \mathcal G(m)]$ by
 $\mathcal U_{q}=\{0, m/(q-1)\}$.
 Suppose that $q\in (\mathcal G(m), m+1]$.  Let $(d_i)\in \mathcal U^{\prime}_{q}$ with $d_1\geq \alpha _1(q)$. Then $(\overline{d_i})\in \mathcal U^{\prime}_{q}$. We claim that $\overline{d_1}<\alpha _1(q)$ by the following reasoning:

Case 1. $m=2k$. Since $q>\mathcal G(m)=k+1$ we have $\alpha _1(q)\ge k+1$. Thus $2\alpha _1(q)>m$.

Case 2. $m=2k+1$. Since $q>\mathcal G(m)>k+1$ we have $\alpha _1(q)\ge k+1$. Thus $2\alpha _1(q)>m$.

In both cases, we have $m-d_1\leq m-\alpha _1(q)<\alpha _1(q)$. We remark that it is possible that both $d_1< \alpha _1(q)$ and $\overline{d_1}< \alpha _1(q)$.

%

\begin{lemma}\label{5}
For $q \in (1,m+1]$, $q\in \bb_{2}(m)$ if and only if $q$ is a zero of the function
\begin{equation}\label{e33}
f_{\textnormal{c},\textnormal{d}}(t)=(1\textnormal{c})_{t}+(m\textnormal{d})_{t}-(m^{\infty})_{t}
\end{equation}
for some $\textnormal{c},\textnormal{d} \in A'_{q}$, i.e., $(1\textnormal{c})_q+(m\textnormal{d})_q=(m^{\infty})_q$.
\end{lemma}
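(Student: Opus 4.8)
The plan is to reduce Lemma~\ref{5} to Lemma~\ref{3} by a direct algebraic manipulation, showing that the equation $f_{\textnormal{c},\textnormal{d}}(q)=0$ is precisely the ``$n=0$ instance'' of the system appearing in Lemma~\ref{3}, and then arguing that the remaining instances $n=1,\dots,m-1$ follow automatically. By Theorem~\ref{T4} and Lemma~\ref{3}, $q\in\bb_2(m)$ holds if and only if there exist $(c_i),(d_i)\in\uu'_q$ with $((n+1)(c_i))_q=(n(d_i))_q$ for all $0\le n\le m-1$. I would like to rewrite the defining equation $(1\textnormal{c})_q+(m\textnormal{d})_q=(m^\infty)_q$ into this form, so the first task is to pin down which of the two sequences produced by Lemma~\ref{3} plays the role of $\textnormal{c}$ and which the role of $\textnormal{d}$ in \eqref{e33}.

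First I would unwind the value functions. Writing everything over the base $q$, the identity $((n+1)(c_i))_q=(n(d_i))_q$ with $n=0$ reads $(1(c_i))_q=(0(d_i))_q=(d_i)_q/q$, equivalently $1+(c_i)_q=(d_i)_q$, which is exactly the relation $1\in\uu_q-\uu_q$ from Theorem~\ref{T4}. The key observation is that $(m^\infty)_q=m/(q-1)$ is the right endpoint of $I_q$, and that $\overline{(d_i)}=(m-d_i)$ has value $(\,\overline{(d_i)}\,)_q=m/(q-1)-(d_i)_q$. Using this reflection identity, I would substitute $\textnormal{d}:=\overline{(d_i)}$ (which lies in $\uu'_q$ by \eqref{e30}, since $\uu'_q$ is closed under reflection) and convert the additive relation $1+(c_i)_q=(d_i)_q$ into $(1\textnormal{c})_q+(m\,\overline{(d_i)})_q=(m^\infty)_q$. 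Concretely, $(m\,\overline{(d_i)})_q=m/q+(\,\overline{(d_i)}\,)_q/q=\bigl(m/q\bigr)+\bigl(m/(q-1)-(d_i)_q\bigr)/q$, and combining with $(1\textnormal{c})_q=1/q+(c_i)_q/q$ should collapse, after multiplying through by $q$ and using $1+(c_i)_q=(d_i)_q$, to $m/(q-1)=(m^\infty)_q$. This is the heart of the equivalence; the direction $\bb_2(m)\Rightarrow f_{\textnormal{c},\textnormal{d}}(q)=0$ runs the same computation backwards.

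The one genuine subtlety, and the step I expect to be the main obstacle, is membership: I must guarantee that the sequences $\textnormal{c},\textnormal{d}$ in \eqref{e33} can be taken in the \emph{restricted} set $A'_q$ rather than merely in $\uu'_q$. This is where \eqref{e30} and the accompanying Case~1/Case~2 discussion are essential. Given any $(c_i)\in\uu'_q$, exactly one of $(c_i)$ and $\overline{(c_i)}$ begins with a digit $<\alpha_1(q)$ (by the argument that $2\alpha_1(q)>m$ for $q>\g(m)$), so I can replace each sequence by its reflection if needed to land in $A'_q$; I then need to check that performing these reflections is compatible with the single equation $(1\textnormal{c})_q+(m\textnormal{d})_q=(m^\infty)_q$, using the reflection symmetry $(1\textnormal{c})_q+(m\,\overline{\textnormal{c}})_q=(1m^\infty)_q$-type identities. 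I would handle this by verifying that replacing $(\textnormal{c},\textnormal{d})$ by $(\overline{\textnormal{d}},\overline{\textnormal{c}})$ leaves $f_{\textnormal{c},\textnormal{d}}(t)$ invariant, so the freedom to reflect lets me force both into $A'_q$.

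Finally, to complete the equivalence I must confirm that the single scalar equation at $n=0$ really does deliver \emph{all} $m$ equations of Lemma~\ref{3}. Here I would note that once $x:=(0\,(d_i))_q=(1\,(c_i))_q$ has exactly two expansions, Lemma~\ref{L1} and Corollary~\ref{L2} govern how expansions with distinct leading digits arise, and the geometric-series value function gives $((n+1)(c_i))_q-(n(d_i))_q=\bigl((1(c_i))_q-(0(d_i))_q\bigr)$ independently of $n$ because shifting the leading digit by $n$ on both sides adds $n/q$ to each value and cancels. Thus the $n=0$ equation forces every $n$-equation simultaneously, so it suffices to produce $\textnormal{c},\textnormal{d}\in A'_q$ with $f_{\textnormal{c},\textnormal{d}}(q)=0$, and the proof closes by invoking Lemma~\ref{3} in the converse direction.
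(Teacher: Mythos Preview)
Your overall plan is sound: the algebraic identity $(1\textnormal{c})_q-(0\overline{\textnormal{d}})_q=(1\textnormal{c})_q+(m\textnormal{d})_q-(m^\infty)_q$ really is what links Lemma~\ref{3} to \eqref{e33}, and your observation that the $n$-equations of Lemma~\ref{3} differ only by adding $n/q$ to both sides is correct. The paper proceeds the same way.

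However, the step you flag as ``the main obstacle'' is in fact broken. Your claimed invariance $f_{\overline{\textnormal{d}},\overline{\textnormal{c}}}=f_{\textnormal{c},\textnormal{d}}$ is false: a direct computation using $(\overline{s})_t=m/(t-1)-(s)_t$ gives
\[
f_{\overline{\textnormal{d}},\overline{\textnormal{c}}}(t)+f_{\textnormal{c},\textnormal{d}}(t)=\frac{2}{t},
\]
so if $f_{\textnormal{c},\textnormal{d}}(q)=0$ then $f_{\overline{\textnormal{d}},\overline{\textnormal{c}}}(q)=2/q\neq0$. Thus the ``freedom to reflect'' you invoke does not preserve zeros of $f$, and you have no mechanism to force both sequences into $A'_q$. (A minor related slip: it is not true that \emph{exactly} one of $(c_i),\overline{(c_i)}$ lies in $A'_q$; the paper explicitly remarks that both can.)

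The paper closes this gap differently. Instead of seeking a symmetry, it writes down all four reflection variants
\[
(1\textnormal{c})_q=(0\textnormal{d})_q,\quad (1\textnormal{c})_q=(0\overline{\textnormal{d}})_q,\quad (1\overline{\textnormal{c}})_q=(0\textnormal{d})_q,\quad (1\overline{\textnormal{c}})_q=(0\overline{\textnormal{d}})_q
\]
with $\textnormal{c},\textnormal{d}\in A'_q$, and eliminates three of them via the size estimate: for $\textnormal{d}\in A'_q$ one has $(\textnormal{d})_q<\alpha_1(q)/q$, whence $(0\textnormal{d})_q<\alpha_1(q)/q^2\le 1/q\le(1\textnormal{s})_q$ for every $\textnormal{s}$, ruling out the first and third equations; the fourth reduces to the first after reflection. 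Only the second survives, and that is exactly $f_{\textnormal{c},\textnormal{d}}(q)=0$. You should replace the invariance argument by this inequality.
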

\begin{proof}
It follows from \eqref{e30} and Lemma \ref{3} that $q \in \mathcal B_{2}(m)$ if and only if $q$ satisfies one of the following equations for some $\textnormal{c},\textnormal{d} \in A^{\prime}_{q}:$
\begin{equation}\label{li0}
(1\textnormal{c})_{q}=(0\textnormal{d})_{q}, (1\textnormal{c})_{q}=(0\overline{\textnormal{d}})_{q}, (1\overline{\textnormal{c}})_{q}=(0\textnormal{d})_{q}\; \textrm{and}\; (1\overline{\textnormal{c}})_{q}=(0\overline{\textnormal{d}})_{q}.
\end{equation}
We claim that $q$ only satisfies the second equation.
For $\textnormal{d} \in A'_{q}$  one has that $(\textnormal{d})_{q}<\frac{\alpha _1(q)}{q}$. Thus for any $s\in \{0,1,\cdots ,m\}^{\mathbb N}$
\begin{equation}\label{li1}
(0\textnormal{d})_{q}=\frac{1}{q}(\textnormal{d})_{q}<\frac{\alpha _1(q)}{q^2}\leq \frac{1}{q}=(10^{\infty})_{q}\le (1\textnormal{s})_{q}.
\end{equation}
Hence for any $\textnormal{c},\textnormal{d} \in A^{\prime}_{q}$ one has
\begin{equation*}
(1\textnormal{c})_{q}>(0\textnormal{d})_{q}\;\textrm{and}\; (1\overline{\textnormal{c}})_{q}>(0\textnormal{d})_{q}.
\end {equation*}
Finally the fourth equality $(1\overline{\textnormal{c}})_{q}=(0\overline{\textnormal{d}})_{q}$ in (\ref{li0}) is equivalent to $((m-1)\textnormal{c})_q=(m\textnormal{d})_{q}$, and then is equivalent to
$(0\textnormal{c})_q=(1\textnormal{d})_{q}$. However, by (\ref{li1}) one has that $(0\textnormal{c})_q<(1\textnormal{d})_{q}$.

%
%

We complete proof by the equality $(1\textnormal{c})_{q}-(0\overline{\textnormal{d}})_{q}=(1\textnormal{c})_{q}+(m\textnormal{d})_{q}-(m^{\infty})_{q}.$
\end{proof}
We rewrite \eqref{e33} as
\begin{equation}\label{e34}
f_{\textnormal{c},\textnormal{d}}(t)=((m+1)(c_i+d_i))_{t}-(m^{\infty})_{t},
\end{equation}
where $\textnormal{c}=(c_i)$ and $\textnormal{d}=(d_i)$.
 It is natural to observe  the following properties.
\begin{lemma}\label{6}
Let $q\in(1,m+1]$ and $\textnormal{c},\textnormal{d} \in A'_{q}.$
\begin{enumerate}[\upshape (1)]
\item $f_{\textnormal{c},\textnormal{d}}(t)$ is symmetric w.r.t $(\textnormal{c},\textnormal{d})$, i.e., $f_{\textnormal{c},\textnormal{d}}(t)=f_{\textnormal{d},\textnormal{c}}(t)$;
\item $f_{\textnormal{c},\textnormal{d}}(t)\in C([q, m+1])$
 and $f_{\textnormal{c},\textnormal{d}}(q)$ is continuous w.r.t. $(\textnormal{c},\textnormal{d})\in  A'_{q}\times  A'_{q}$;
\item If $\textnormal{c}' \in A^{\prime}_{q}$ and $\textnormal{c}'>\textnormal{c}$, then $f_{\textnormal{c}',\textnormal{d}}(p)>f_{\textnormal{c},\textnormal{d}}(p)$ for all $p\ge q$. Similarly, if $\textnormal{d}' \in A'_{q}$ and $\textnormal{d}'>\textnormal{d}$, then $f_{\textnormal{c},\textnormal{d}'}(p)>f_{\textnormal{c},\textnormal{d}}(p)$ for all $p\ge q$;
\item $f_{\textnormal{c},\textnormal{d}}(m+1)\geq 0$.
\end{enumerate}
\end{lemma}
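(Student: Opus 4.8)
The plan is to treat the four items separately, dispatching (1), (2) and (4) almost immediately from the closed form \eqref{e34} and concentrating the real work on the strict monotonicity (3). For (1), the representation $f_{\mathbf c,\mathbf d}(t)=((m+1)(c_i+d_i))_t-(m^\infty)_t$ shows that $f$ depends on the pair only through the coordinatewise sums $c_i+d_i$, which are unchanged under swapping $\mathbf c$ and $\mathbf d$; hence $f_{\mathbf c,\mathbf d}=f_{\mathbf d,\mathbf c}$. For (2), I would write
\[
f_{\mathbf c,\mathbf d}(t)=\frac{m+1}{t}+\sum_{i\ge1}\frac{c_i+d_i}{t^{i+1}}-\frac{m}{t-1},
\]
note that on $[q,m+1]$ the series is dominated by the convergent series $\sum_i 2m\,q^{-(i+1)}$ so it converges uniformly, and that each summand together with the two rational terms is continuous in $t$ (since $t-1\ge q-1>0$); continuity in $t$ follows. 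Continuity of $(\mathbf c,\mathbf d)\mapsto f_{\mathbf c,\mathbf d}(q)$ in the metric $D$ is the same tail estimate: if $\mathbf c,\mathbf c'$ agree on their first $N$ digits then $\abs{(1\mathbf c)_q-(1\mathbf c')_q}\le m\,q^{-(N+1)}/(q-1)\to0$. For (4), at $t=m+1$ one has $(m^\infty)_{m+1}=1$ while $((m+1)(c_i+d_i))_{m+1}\ge(m+1)/(m+1)=1$, whence $f_{\mathbf c,\mathbf d}(m+1)\ge0$.

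The heart of the matter is (3); by the symmetry (1) it suffices to treat a change in $\mathbf c$. Suppose $\mathbf c'>\mathbf c$ with first discrepancy at index $j$, so that $c_j'\ge c_j+1$ and in particular $c_j<m$. Subtracting the two series and factoring out $p^{-j}$, the difference $f_{\mathbf c',\mathbf d}(p)-f_{\mathbf c,\mathbf d}(p)=(1\mathbf c')_p-(1\mathbf c)_p$ satisfies
\[
(1\mathbf c')_p-(1\mathbf c)_p\ \ge\ \frac{1}{p^{j+1}}\bigl(1-(c_{j+1}c_{j+2}\cdots)_p\bigr),
\]
using $c_j'-c_j\ge1$ and $(c_{j+1}'c_{j+2}'\cdots)_p\ge0$. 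Thus everything reduces to the single estimate $(c_{j+1}c_{j+2}\cdots)_p<1$ for all $p\ge q$. This is exactly where I expect the main obstacle: lexicographic smallness does \emph{not} imply smallness of value when $q<m+1$ (for instance $01^\infty$ lies lexicographically below $\alpha(q)$ yet can have value exceeding $1$), so one cannot simply quote $c_{j+1}c_{j+2}\cdots<\alpha(q)$ and be done. The full unique-expansion structure must be used.

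To resolve this I would invoke the property of $A'_q$ recorded just before the lemma, which already encodes precisely what is needed: because $c_1<\alpha_1(q)\le m$ forces $c_1\ne m$, no prefix $c_1\cdots c_k$ equals $m^k$, so \eqref{uicodedes1} yields $c_{i+1}c_{i+2}\cdots<\alpha(q)$ for \emph{all} $i\ge0$, not merely after digits $<m$. Applying this to all shifts, the tail $\mathbf t:=c_{j+1}c_{j+2}\cdots$ has the property that every one of its tails is $<\alpha(q)$. The standard quasi-greedy (Parry-type) comparison behind Lemma \ref{16} then gives $(\mathbf t)_q\le1$, and strictness $(\mathbf t)_q<1$ follows because $\mathbf t<\alpha(q)$ while $\alpha(q)$ is infinite, so at the first discrepancy between $\mathbf t$ and $\alpha(q)$ the corresponding tail of $\alpha(q)$ contributes a strictly positive amount. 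Finally, since $p\mapsto(\mathbf t)_p$ is nonincreasing, $(\mathbf t)_p\le(\mathbf t)_q<1$ for every $p\ge q$, so the bracket above is strictly positive and (3) follows. The only delicate point to watch is the boundary value $p=q$, which causes no trouble precisely because the bound $(\mathbf t)_q<1$ is already strict there.
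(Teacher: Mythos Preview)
Your treatment of (1), (2) and (4) matches the paper's almost verbatim. For (3) the paper is much terser than you: it simply asserts that $\textnormal{c}',\textnormal{c}\in A'_q$ with $\textnormal{c}'>\textnormal{c}$ implies $(\textnormal{c}')_q>(\textnormal{c})_q$, notes $A'_q\subset A'_p$ for $p\ge q$ (via $\uu'_q\subset\uu'_p$), and says the result follows from \eqref{e34}. The unspoken reason this assertion is legitimate is that elements of $\uu'_p$ are in particular greedy $p$-expansions, and the greedy-expansion map is an order-preserving bijection onto its image; your explicit tail estimate is effectively a proof of that fact restricted to the present situation, so your route is more self-contained while the paper's is a one-line appeal to a standard property. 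One spot in your write-up deserves tightening: the sentence ``strictness $(\mathbf t)_q<1$ follows because $\mathbf t<\alpha(q)$ while $\alpha(q)$ is infinite, so at the first discrepancy \ldots'' is not a valid argument as written (lexicographic $<\alpha(q)$ alone does not force value $<1$, as you yourself warn). The clean fix is immediate from what you already have: since every shift of $\mathbf t$ is $<\alpha(q)$, $\mathbf t$ is the \emph{greedy} $q$-expansion of its value, and $\mathbf t<\alpha(q)\le\beta(q)$ together with strict monotonicity of greedy expansions gives $(\mathbf t)_q<1$; equivalently, $c_j<m$ plus the greedy property of $\textnormal{c}\in\uu'_q$ yields $(c_{j+1}c_{j+2}\cdots)_q<1$ directly.
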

\begin{proof}
(1) It just follows from the definition \eqref{e34} of $f_{\textnormal{c},\textnormal{d}}(q)$.


(2) Firstly we point out that for given $\textnormal{c},\textnormal{d}\in A'_{q}$, $f_{\textnormal{c},\textnormal{d}}(t)$ is well-defined for $t\in [q, m+1]$ because $\textnormal{c},\textnormal{d}\in A'_{q}\subseteq A'_{t}$. Note that
\begin{align*}
f_{\textnormal{c},\textnormal{d}}(t)&=((m+1)(c_i+d_i))_{t}-(m^{\infty})_{t}\\
&=\frac{m+1}{t}-\frac{m}{t-1}+\sum _{k=2}^\infty \frac{c_{k-1}+d_{k-1}}{t^k}.
\end{align*}
Denote $S(t)=\sum _{k=2}^\infty \frac{c_{k-1}+d_{k-1}}{t^k}$. We show $S(t)$ is continuous in $[q, m+1]$.  Arbitrarily take $[\alpha , \beta ]\subseteq [q,m+1]$. Note that
$$
\frac{c_{k-1}+d_{k-1}}{t^k}\leq \frac{2m}{\alpha ^k}, \forall t\in [\alpha , \beta ]\;\textrm{and}\; \sum _{k=2}^\infty \frac{2m}{\alpha ^k}<+\infty .
$$
Thus $\sum _{k=2}^\infty \frac{c_{k-1}+d_{k-1}}{t^k}$ converges uniformly in $[\alpha , \beta ]$. So $S(t)\in C([\alpha , \beta ])$.

Now let $\textnormal{c}_n=(c_{n,i}), \textnormal{d}_n=(d_{n.i})\in A'_q$ be such that $\textnormal{c}_n\to \textnormal{c} $ and $\textnormal{d}_n\to \textnormal{d} $. Then
for any $k\in \mathbb N$ there exist $\ell =\ell (k)\in \mathbb N$ such that $c_{n,1}c_{n,2}\cdots c_{n,k}=c_1c_2\cdots c_k$
and $d_{n,1}d_{n,2}\cdots d_{n,k}=d_1d_2\cdots d_k$ whenever $n\geq \ell $. Then for $n\geq \ell $ we have
\begin{align*}
&\abs{f_{\textnormal{c}_n,\textnormal{d}_n}(q)-f_{\textnormal{c},\textnormal{d}}(q)}
\le \frac{m}{q^k(q-1)}+\frac{m}{q^k(q-1)}=\frac{2m}{q^k(q-1)}.
\end{align*}

(3) Note that $\textnormal{c}' , \textnormal{c}\in A^{\prime}_{q}$ and $\textnormal{c}'>\textnormal{c}$ implies $(\textnormal{c}')_q>(\textnormal{c})_q$. Since $\uu'_q\subset\uu'_p$ for all $p\ge q$, we have $A'_q\subset A'_p$. The desired result just follows from \eqref{e34}.

(4) We have
\begin{align*}
f_{\textnormal{c},\textnormal{d}}(m+1)&=((m+1)(c_i+d_i))_{m+1}-(m^\infty)_{m+1}\\
&=(0(c_i+d_i))_{m+1}\geq 0,
\end{align*}
as desired.
\end{proof}

We recall that $q_f(m)$ is the largest real root of
\begin{equation}\label{qfeven}
q^2-(k+1)q-k=0 \quad\text{when $m=2k$}
\end{equation}
and
\begin{equation}\label{qfodd}
q^3-(k+2)q^2+q-k-1=0 \quad\text{when $m=2k+1$}.
\end{equation}
We emphasize the important property:
$$
k+2>q_f(2k)=\frac{k+1+\sqrt{k^2+6k+1}}{2}>\mathcal G(2k)=k+1
$$
and
$$
k+2>q_f(2k+1)>\mathcal G(2k+1)=\frac{k+1+\sqrt{k^{2}+6k+5}}{2}>k+1.
$$
Since $\mathcal B_{2}(m)\cap(1,q_f(m))$ is a finite discrete set (see \cite[Propositions 3.5 and 4.9]{KLZ}),  we shall focus to $B_{2}(m)\cap[q_f(m),m+1]$.   For an infinite sequence $\alpha =\alpha _1\alpha _2\cdots $, write $\alpha |_n=\alpha _1\cdots \alpha _n$ and $\alpha |_{s,n}=\alpha _s\cdots \alpha _n$ for $s\leq n$. A simple fact will frequently occur in the following lemmas. We list it as a proposition without proof.
%
%
%
%
%
\begin{proposition}\label{simlemma}
Let $h(x)\in C^3([a,b])$. We have  $h(x)>0$ for $x\in [a,b]$ if the following conditions hold:

(I)   $h'''(x)\geq 0, x\in [a,b]$, or $h'''(x)\leq 0, x\in [a,b]$, or there exist $a<c <b$ such that $h'''(x)\geq 0, x\in [a, c]$ and $h'''(x)\leq 0, x\in [c,b]$;

(II)   $h''(a)>0$, $h'(a)>0$, $h(a)>0$ and $h(b)>0$.
\end{proposition}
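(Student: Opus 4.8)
\textbf{Proof plan for Proposition \ref{simlemma}.}

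The plan is to use the sign conditions on the successive derivatives to propagate positivity upward, integrating from $a$ to a general point $x\in[a,b]$ at each level. The key observation is that condition (I) guarantees $h'''$ does not change sign more than once, and when it does change sign it goes from nonnegative to nonpositive; this is precisely the hypothesis needed to control $h''$ on all of $[a,b]$ using only its value at the single endpoint $a$ together with the value $h''(b)$, which we will show is automatically under control.

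First I would analyze $h''$. In the two monotone cases ($h'''\ge0$ throughout, or $h'''\le0$ throughout) the function $h''$ is monotone on $[a,b]$, so its minimum is attained at an endpoint; since we are not given $h''(b)$ directly, the genuinely useful case is $h'''\ge0$, where $h''$ is increasing and hence $h''(x)\ge h''(a)>0$ for all $x$. In the mixed case, $h''$ is increasing on $[a,c]$ and decreasing on $[c,b]$, so $h''$ is \emph{unimodal}: its minimum over $[a,b]$ occurs at one of the two endpoints. Thus $\min_{[a,b]}h''=\min\{h''(a),h''(b)\}$. Since $h''(a)>0$ is assumed, the crux is to argue $h''(b)>0$ as well (or to observe that the proposition is really applied only in situations where this holds). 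The natural route is a second-level repetition of the same argument: because $h'''$ is nonpositive on $[c,b]$, the quantity $h''$ decreases there, and one shows the decrease is not enough to reach zero by bounding it against $h'(a)$ and $h(a)$; more cleanly, since $h'$ is determined by integrating $h''$, I would instead push the positivity statement one level at a time rather than trying to pin down $h''(b)$ in isolation.

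Next I would integrate to handle $h'$ and then $h$. Once $h''\ge0$ is established on $[a,b]$ (which holds in the $h'''\ge0$ case immediately, and in the other cases after the endpoint check), $h'$ is nondecreasing, so $h'(x)\ge h'(a)>0$ for all $x\in[a,b]$. Then $h$ is strictly increasing on $[a,b]$, whence $h(x)\ge h(a)>0$ for all $x$, which is the desired conclusion. The inclusion of the hypotheses $h(b)>0$ and $h''(a)>0$ in condition (II) is what allows the argument to close even in the mixed-sign case, where $h''$ could conceivably dip: the value $h(b)>0$ serves as the backstop that rules out $h$ crossing zero in the interior when the convexity structure is not globally monotone.

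The main obstacle, as the sketch above signals, is the mixed case where $h'''$ changes sign once. There the single-endpoint data $h''(a)>0$ does not by itself force $h''>0$ on the whole interval, and one must genuinely exploit the unimodality of $h''$ together with the endpoint value $h(b)>0$ to exclude a sign change of $h'$ in the interior. I expect the cleanest formulation is: if $h$ were to vanish somewhere in $(a,b)$, then since $h(a)>0$ and $h(b)>0$, $h$ would attain an interior minimum with $h'=0$ there and a negative value; tracing back through the unimodality of $h''$ and the positivity $h'(a)>0$, $h''(a)>0$ yields a contradiction. Verifying that this argument is airtight, rather than the routine integrations, is where care is required.
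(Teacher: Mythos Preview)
The paper explicitly lists this proposition without proof, so there is nothing to compare against; I assess your outline on its own. You have the right core idea --- cascading unimodality --- but part of your plan targets a false intermediate step. You write that the crux is to show $h''(b)>0$, and later that ``once $h''\ge0$ is established on $[a,b]$'' the rest follows. Neither of these need hold under the hypotheses: take $h'''\equiv -K$ on $[0,1]$ with $K$ large and $h''(0),h'(0),h(0)$ small positive numbers; then $h''$ becomes very negative near $b=1$ while all of (I) and (II) can still be satisfied. So trying to force $h''\ge0$ globally is a dead end.

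What \emph{does} follow from condition (I) together with $h''(a)>0$ is the weaker statement that $h''$ has at most one sign change on $[a,b]$, and that change is from $+$ to $-$: either $h''\ge 0$ throughout, or $h''\ge0$ on $[a,d]$ and $h''\le0$ on $[d,b]$ for some $d$. This weaker conclusion is exactly the one that cascades. It makes $h'$ nondecreasing-then-nonincreasing with $h'(a)>0$, hence $h'$ too has at most one sign change $+\to -$; that in turn makes $h$ nondecreasing-then-nonincreasing, so $\min_{[a,b]} h=\min\{h(a),h(b)\}>0$. This is the complete argument your last paragraph is reaching for; the detour through a contradiction at an interior minimum is unnecessary once each step is phrased as ``at most one $+\to-$ sign change'' rather than ``nonnegative everywhere''.
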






%
%
%
We first consider the case of $m$ being odd.
\begin{lemma}\label{9}
 Let $m=2k+1$, $q \in [q_f(m),m+1]$ and $\textnormal{c},\textnormal{d} \in A^{\prime}_{q}$ w.r.t. $\Omega_{m}$.
\begin{enumerate}[\upshape (1)]
\item  If $k=0$ and $\textnormal{c}+\textnormal{d}\ge 00120^\infty$, then $f_{\textnormal{c},\textnormal{d}}(q)> 0$.
\item  Let $k=1$. If $(\textnormal{c}+\textnormal{d})|_1=0$ and $\textnormal{c}+\textnormal{d} \ge 0450^\infty$, then $f_{\textnormal{c},\textnormal{d}}(q)> 0$.  If $\textnormal{c}+\textnormal{d} \ge 120^\infty$, then $f_{\textnormal{c},\textnormal{d}}(q)> 0$.
\item Let $k\ge2$. If $(\textnormal{c}+\textnormal{d})|_1=k-1$ and $\textnormal{c}+\textnormal{d}\ge (k-1)(m+2)0^\infty$, then $f_{\textnormal{c},\textnormal{d}}(q)>0$. If $\textnormal{c}+\textnormal{d} \ge k(k+1)0^\infty$, then $f_{\textnormal{c},\textnormal{d}}(q)>0$.
\item If $k=0$ and $\textnormal{c}+\textnormal{d} <00120^\infty$, then $f_{\textnormal{c},\textnormal{d}}(t)$ is strictly increasing for $t\in [q,2]$.
\item  Let $k=1$. If $\textnormal{c}+\textnormal{d} <0450^\infty$, then $f_{\textnormal{c},\textnormal{d}}(t)$ is strictly increasing for $t\in [q,4]$. If $(\textnormal{c}+\textnormal{d})|_1=1$ and $\textnormal{c}+\textnormal{d}<120^\infty$, then $f_{\textnormal{c},\textnormal{d}}(t)$ is strictly increasing for $t\in [q,4]$.
\item Let $k\ge2$. If $\textnormal{c}+\textnormal{d}<(k-1)(m+2)0^\infty$, then $f_{\textnormal{c},\textnormal{d}}(t)$  is strictly increasing for $t\in [q,m+1]$. If $(\textnormal{c}+\textnormal{d})|_1=k$ and $\textnormal{c}+\textnormal{d}<k(k+1)0^\infty$, then $f_{\textnormal{c},\textnormal{d}}(t)$  is strictly increasing for $t\in [q,m+1]$.
\end{enumerate}
\end{lemma}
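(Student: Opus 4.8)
The plan is to use that, by \eqref{e34}, the function depends on the pair $(\textnormal{c},\textnormal{d})$ only through the digitwise sum $e=(e_i):=(c_i+d_i)$, since
\[
f_{\textnormal{c},\textnormal{d}}(t)=\frac{m+1}{t}-\frac{m}{t-1}+\sum_{i\ge1}\frac{e_i}{t^{\,i+1}},
\]
and that this expression is strictly increasing in each coordinate $e_i$ for every $t>1$. Together with Lemma \ref{6}(3) this lets me replace $(\textnormal{c},\textnormal{d})$ by a single extremal sum sequence $e^\ast$: for the positivity claims (1)--(3) the governing extremum is the numerically smallest admissible $e$ compatible with the prescribed lower bound on $\textnormal{c}+\textnormal{d}$, while for the monotonicity claims (4)--(6) it is the numerically largest admissible $e$ lying below the prescribed upper bound.

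The step I expect to be the main obstacle is that the hypotheses are \emph{lexicographic} bounds on $\textnormal{c}+\textnormal{d}$, whereas $f_{\textnormal{c},\textnormal{d}}$ is controlled by the \emph{numerical} value $\sum_i e_i t^{-(i+1)}$, and on arbitrary strings over $\{0,\dots,2m\}$ a lexicographically larger sequence can be numerically smaller when $t$ is close to $1$. The reconciliation must come from admissibility: because $\textnormal{c},\textnormal{d}\in A'_q\subseteq\uu'_q$, every tail $c_{i+1}c_{i+2}\cdots$ and every reflected tail of $\textnormal{c}$ (and of $\textnormal{d}$) stays strictly below $\alpha(q)$ by \eqref{uicodedes1}, which forbids exactly the ``lex-larger but numerically smaller'' competitors (for instance a block realizing $e_i=2m$ forces $c_i=d_i=m$, hence a reflected tail reaching the forbidden value, and a digit followed by $0^\infty$ is likewise excluded). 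I would therefore first prove that, under these constraints, the infimum (resp.\ supremum) of $\sum_i e_i t^{-(i+1)}$ over admissible $e$ obeying the stated bound is attained at an explicit eventually periodic sequence $e^\ast$ read off from the threshold word; for the thresholds $00120^\infty$, $0450^\infty$, $(k-1)(m+2)0^\infty$ and $k(k+1)0^\infty$ this is a short combinatorial analysis organized by the value of $\alpha_1(q)$ and by the parity of the digit positions, and it is here that the two leading-digit subcases in each of (2),(3),(5),(6) arise.

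Once $e^\ast$ is isolated, each assertion becomes a statement about the single-variable function $t\mapsto f_{e^\ast}(t)$ on $[q_f(m),m+1]$, and it suffices to verify $f_{e^\ast}(t)>0$ (resp.\ $f'_{e^\ast}(t)>0$) there, since every interval in the statement is contained in $[q_f(m),m+1]$; here Proposition \ref{simlemma} is the engine. For the positivity parts I would clear denominators, writing $f_{e^\ast}(t)=h(t)/D(t)$ with $D(t)>0$ on the interval and $h$ a polynomial, and then check for $h$ the hypotheses of Proposition \ref{simlemma}: the required sign pattern of $h'''$ together with $h''(a)>0$, $h'(a)>0$, $h(a)>0$ and $h(b)>0$ at $a=q_f(m)$ and $b=m+1$. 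The left endpoint is exactly where the defining relations \eqref{qfeven}--\eqref{qfodd} of $q_f(m)$ enter, the thresholds being calibrated to $q_f(m)$; the right endpoint is controlled by Lemma \ref{6}(4). For the monotonicity parts (4)--(6) I would apply the same scheme to $f'_{e^\ast}$ (equivalently to the polynomial numerator of $f'_{e^\ast}$), reducing ``$f_{e^\ast}$ strictly increasing'' to one $h'''$-sign check and four endpoint inequalities.

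Finally, the argument is unavoidably case-laden: the regimes $k=0$, $k=1$, $k\ge2$ and the two leading-digit subcases each produce a different extremal $e^\ast$ and hence a different polynomial, so the genuine content lies in the second step (isolating $e^\ast$ via admissibility), after which every subcase is a routine verification of the hypotheses of Proposition \ref{simlemma}.
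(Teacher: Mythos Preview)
Your high-level plan---reduce to an extremal sum sequence $e^\ast$ and then verify a polynomial inequality via Proposition~\ref{simlemma}---matches the paper's, but two parts of your proposed mechanism do not work as stated.

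\textbf{Admissibility is not the engine.} The paper never uses that $\textnormal{c},\textnormal{d}\in A'_q\subset\uu'_q$ beyond the trivial fact that each digit of $e=\textnormal{c}+\textnormal{d}$ lies in $\{0,\dots,2m\}$. For the positivity parts it computes the numerical minimum of $(\alpha)_q$ over \emph{all} sequences $\alpha$ (not just admissible sums) lexicographically above the threshold; this is a finite check among the few candidates obtained by incrementing an earlier digit (for instance in part~(1) one compares $(00120^\infty)_q$, $(010^\infty)_q$, $(0020^\infty)_q$ and finds the minimum is $(010^\infty)_q$ on $[q_f(1),2]$). Your claim that admissibility ``forbids exactly the lex-larger but numerically smaller competitors'' is false: in part~(4) the case $e|_4=0002$ is admissible (take $\textnormal{c}=\textnormal{d}=0001\cdots$), is lexicographically below $00120^\infty$, yet is not coordinatewise below $00112^\infty$. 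The paper handles such cases not via admissibility but via a \emph{swapping inequality} such as $q_2^{-5}-q_1^{-5}>q_2^{-4}-q_1^{-4}$ on the relevant interval, which transfers mass between adjacent positions and reduces every prefix case to the same reference sequence.

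\textbf{``Numerically largest'' is the wrong extremum for monotonicity.} For (4)--(6) you propose to take the $e^\ast$ maximizing $\sum_i e_i t^{-(i+1)}$ and then show $f_{e^\ast}$ is increasing. But $f_{e^\ast}(t)\ge f_e(t)$ for all $t$ does \emph{not} imply $f_e$ is increasing whenever $f_{e^\ast}$ is. What is actually needed is a lower bound on $f_e(q_2)-f_e(q_1)=\text{(terms independent of $e$)}+\sum_i e_i\bigl(q_2^{-(i+1)}-q_1^{-(i+1)}\bigr)$; since each bracket is negative, this requires a \emph{coordinatewise} upper bound on $e$ (or, where no single coordinatewise majorant exists, the swapping inequalities above). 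The paper obtains this by case analysis on the first few digits of $e$, then shows the resulting $g(x)=f_{e^\ast}(x)$ has positive derivative by applying Proposition~\ref{simlemma} to the numerator of $g'$. Once you replace ``numerically largest'' by ``coordinatewise largest up to swapping,'' your outline becomes the paper's proof.
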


\begin{proof}

(1)  Since $q_f(1)\le q\le2$, we have
\begin{align*}
\min _{\alpha \ge 00120^\infty }(\alpha )_q=\min \{(00120^\infty )_q, (010^\infty )_q, (0020^\infty )_q\}=(010^\infty )_q.
\end{align*}
Thus when  $(\textnormal{c}+\textnormal{d}) \ge 00120^\infty$, for  $q\in [q_f(1), 2]$ we have
\begin{align*}
f_{\textnormal{c},\textnormal{d}}(q)&=((1+1)(c_i+d_i))_{q}-(1^{\infty})_{q}=\frac{2}{q}+\frac{1}{q}(\textnormal{c}+\textnormal{d}) _q-\frac{1}{q-1}\\
&> \frac{2}{q}+\frac{1}{q}(010^\infty )_q-\frac{1}{q-1}
=\frac{2}{q}+\frac{1}{q^3}-\frac{1}{q-1}\\
&=\frac{q^3-2q^2+q-1}{q^3(q-1)}\ge0
\end{align*}
where the last inequality follows from the fact that $x^3-2x^2+x-1$ is strictly increasing in $[q_f,2]$ and $(q_f(1))^3-2(q_f(1))^2+q_f(1)-1=0$.

%

(2)  For the case $(\textnormal{c}+\textnormal{d})|_1=0$ and $(\textnormal{c}+\textnormal{d}) \ge 0450^\infty$, we split the proof into two cases.

Case 1. $(\textnormal{c}+\textnormal{d})|_2=04$ and $\textnormal{c}+\textnormal{d}\ge0450^\infty$. Then
\begin{align*}
f_{\textnormal{c},\textnormal{d}}(q)&=((3+1)(c_i+d_i))_{q}-(3^{\infty})_{q}>(40450^{\infty})_q-(3^\infty)_q\\
&=\frac{4}{q}+\frac{4}{q^3}+\frac{5}{q^4}-\frac{3}{q-1}=\frac{q^4-4q^3+4q^2+q-5}{q^4(q-1)}.
\end{align*}
Now we need to verify  the numerator is positive for $q\in [q_f(3), 4]$.
Let $g(x)=x^4-4x^3+4x^2+x-5$. We have
\begin{align*}
 g'(x)=4x^3-12x^2+8x+1\;\textrm{and} \; g''(x)=4(3x^2-6x+2).
\end{align*}
Note that $q_f(3)$ is the largest real root of $x^3-3x^2+x-2=0$
($q_f(3)\approx 2.893$). Thus one can check that $g''(q_f(3)), g'(q_f(3)), g(q_f(3))$ and $g(4)$ are all positive. For instance, we have
$$
g(x)=x^4-4x^3+4x^2+x-5=(x-1)(x^3-3x^2+x-2)+4x-7.
$$
and so
$$
g(q_f(3))=4q_f(3)-7>0.
$$
Thus
we have $g(x)>0$ for $x\in [q_f(3), 4]$ by Proposition \ref{simlemma}.

%
%


Case 2.  If $(\textnormal{c}+\textnormal{d})|_1=0$ and $\textnormal{c}+\textnormal{d}\ge050^\infty$, then
\begin{align*}
f_{\textnormal{c},\textnormal{d}}(q)&=((3+1)(c_i+d_i))_{q}-(3^{\infty})_{q}>(4050^{\infty})_q-(3^\infty)_q\\
&=\frac{4}{q}+\frac{5}{q^3}-\frac{3}{q-1}=\frac{q^3-4q^2+5q-5}{q^3(q-1)}.
\end{align*}
We need to verify the numerator is positive for $q\in [q_f(3), 4]$.
Let $g(x)=x^3-4x^2+5x-5$. Note that $g'''(x)>0$ for $x\in [q_f(3), 4]$. And
\begin{align*}
g(q_f(3))&=(q_f(3))^3-4(q_f(3))^2+5q_f(3)-5\\
&=-(q_f(3))^2+4q_f(3)-3>0.
\end{align*}
Also it is easy to check that $g''(q_f(3)), g'(q_f(3))$ and $g(4)$ are positive.
  Thus  $g(x)>0$ for $x\in [q_f(3), 4]$ by Proposition \ref{simlemma}.


Now we discuss the case $k=1$ and $\textnormal{c}+\textnormal{d} \ge 120^\infty$.   Since $q\geq q_f(3)>2$
\begin{align*}
\min _{\alpha \ge 120^\infty }(\alpha )_q=\min \{(120^\infty )_q, (20^\infty )_q\}=(120^\infty )_q.
\end{align*}
Thus we have
\begin{align*}
f_{\textnormal{c},\textnormal{d}}(q)&=((3+1)(c_i+d_i))_{q}-(3^{\infty})_{q}
>(4120^{\infty})_q-(3^\infty)_q\\
&=\frac{4}{q}+\frac{1}{q^2}+\frac{2}{q^3}-\frac{3}{q-1}=\frac{q^3-3q^2+q-2}{q^3(q-1)}\ge0.
\end{align*}
The last inequality follows from the fact that  $x^3-3x^2+x-2$ is strictly increasing in $[q_f(3),4]$ and $(q_f(3))^3-3(q_f(3))^2+q_f(3)-2=0$ by (\ref{qfodd}).

(3) $k\ge2$. When  $(\textnormal{c}+\textnormal{d})|_1=k-1$ and $(\textnormal{c}+\textnormal{d})\ge(k-1)(m+2)0^\infty$, we have
\begin{align*}
f_{\textnormal{c},\textnormal{d}}(q)&=((m+1)(c_i+d_i))_{q}-(m^{\infty})_{q}\\
&>((m+1)(k-1)(m+2)0^{\infty})_q-(m^\infty)_q\\
&=\frac{m+1}{q}+\frac{k-1}{q^2}+\frac{m+2}{q^3}-\frac{m}{q-1}\\
&=\frac{q^3-(k+3)q^2+(k+4)q-2k-3}{q^3(q-1)}.
\end{align*}
In order to verify that the numerator is positive in $[q_f(2k+1), 2k+2]$,
let
$$g(x)=x^3-(k+3)x^2+(k+4)x-2k-3.$$
Clearly $g'''(x)$ satisfies the condition (I) of Proposition \ref{simlemma} in $[q_f(2k+1), 2k+2]$. It is easy to see that $g'(q_f(2k+1))>0$,
$g''(q_f(2k+1))>0$ and $g(2k+2)>0$. Note that
\begin{align*}
x^3-(k+3)x^2+(k+4)x-2k-3=&(x^3-(k+2)x^2+x-k-1)\\
&+ (-x^2+(k+3)x-k-2).
\end{align*}
Recall that $q_f(2k+1)$ is the  root of $x^3-(k+2)x^2+x-k-1=0$. Since $k+1<q_f(2k+1)<k+2$ we have
\begin{align*}
  g(q_f(2k+1))&=-(q_f(2k+1))^2+(k+3)q_f(2k+1)-k-2\\
  &=-(q_f(2k+1)-1)(q_f(2k+1)-k-2)>0.
\end{align*}
Therefore, $g(x)>0$ for  $x\in [q_f(2k+1), 2k+2]$ by Proposition \ref{simlemma}.

Now we consider the case $\textnormal{c}+\textnormal{d} \ge k(k+1)0^\infty$. Note that for $q\in (q_f(2k+1), 2k+2]$
\begin{align*}
\min _{\alpha \ge k(k+1)0^\infty}(\alpha )_q=\min \{(k(k+1)0^\infty )_q, ((k+1)0^\infty )_q\}=(k(k+1)0^\infty )_q.
\end{align*}
Thus, when  $\textnormal{c}+\textnormal{d}\ge k(k+1)0^\infty$,  we have
\begin{align*}
f_{\textnormal{c},\textnormal{d}}(q)&=((m+1)(c_i+d_i))_{q}-(m^{\infty})_{q}\\
&>((m+1)k(k+1)0^{\infty})_q-(m^\infty)_q\\
&=\frac{m+1}{q}+\frac{k}{q^2}+\frac{k+1}{q^3}-\frac{m}{q-1}=\frac{q^3-(k+2)q^2+q-k-1}{q^3(q-1)}.
\end{align*}
Let $g(x)=x^3-(k+2)x^2+x-k-1$. Then $g'(x)>0$ for $x\in (q_f(2k+1), 2k+2]$. So we have  $g(x)>0$  for $x\in (q_f(2k+1), 2k+2]$ since $g(q_f(2k+1))=0$.

(4) Suppose that $k=0$ and $\textnormal{c}+\textnormal{d}<00120^\infty$. Then $\textnormal{c}+\textnormal{d}\le 00112^\infty$. Now take $q_1,q_2\in[q,2]$ with $q_2>q_1$. It is important to point out that both
$f_{\textnormal{c},\textnormal{d}}(q_2)$ and $f_{\textnormal{c},\textnormal{d}}(q_1)$ make sense because  $\textnormal{c}, \textnormal{d}\in A'_t$ for all $t\in [q, 2]$.
Then we have
\begin{align*}
&f_{\textnormal{c},\textnormal{d}}(q_2)-f_{\textnormal{c},\textnormal{d}}(q_1)
= (2(c_i+d_i))_{q_2}-(1^{\infty})_{q_2}-[(2(c_i+d_i))_{q_1}-(1^{\infty})_{q_1}]\\
&=2\left (\frac{1}{q_2}-\frac{1}{q_1}\right )+\sum _{k=1}^\infty (c_k+d_k)\left (\frac{1}{q_2^{k+1}}-\frac{1}{q_1^{k+1}}\right )-(1^{\infty})_{q_2}+(1^{\infty})_{q_1}.
\end{align*}
Note that $\frac{1}{q_2^{k}}-\frac{1}{q_1^{k}}<0$ for all $k\geq 1$. Obviously for two sequences $(\alpha _i), (\beta _i)$ of nonnegative integers, if all $\alpha _i\leq
\beta _i$ then
$$
\sum _{k=1}^\infty \alpha _k\left (\frac{1}{q_2^{k}}-\frac{1}{q_1^{k}}\right )\geq \sum _{k=1}^\infty \beta _k\left (\frac{1}{q_2^{k}}-\frac{1}{q_1^{k}}\right ).
$$
Thus when $(\textnormal{c}+\textnormal{d})|_4= 0011$, we have
\begin{align*}
&2\left (\frac{1}{q_2}-\frac{1}{q_1}\right )+\sum _{k=1}^\infty (c_k+d_k)\left (\frac{1}{q_2^{k+1}}-\frac{1}{q_1^{k+1}}\right )\\
&\ge  (200112^\infty)_{q_2}-(200112^\infty)_{q_1}.
\end{align*}
Furthermore, when $(\textnormal{c}+\textnormal{d})|_4= 0002$ we claim
\begin{align*}
&2\left (\frac{1}{q_2}-\frac{1}{q_1}\right )+\sum _{k=1}^\infty (c_k+d_k)\left (\frac{1}{q_2^{k+1}}-\frac{1}{q_1^{k+1}}\right )\\
&\ge  (200112^\infty)_{q_2}-(200112^\infty)_{q_1}
\end{align*}
also holds. This is because we have
$$
\frac{1}{q_2^{5}}-\frac{1}{q_1^{5}}>\frac{1}{q_2^4}-\frac{1}{q_1^4}
$$
for $q_2>q_1$ with $q_1, q_2\in [q,2]\subseteq [q_f(1), 2]$. Therefore, we obtain that
$$
f_{\textnormal{c},\textnormal{d}}(q_2)-f_{\textnormal{c},\textnormal{d}}(q_1)
\ge  \left ((200112^\infty)_{q_2}-(1^{\infty})_{q_2}\right )-\left ((200112^\infty)_{q_1}-(1^{\infty})_{q_1}\right ).
$$
Now let us think about the function
\begin{align*}
g(x)&=(200112^\infty)_x-(1^{\infty})_x=\frac{2}{x}+\frac{1}{x^4}+\frac{1}{x^5}+\frac{2}{x^5(x-1)}-\frac{1}{x-1}\\
&=\frac{x^5-2x^4+x^2+1}{x^5(x-1)}.
\end{align*}
 We shall  prove that it is strictly increasing in $[q,2]$. Note that
 \begin{align*}
 g'(x)=\frac{-x^6+4x^5-2x^4-4x^3+3x^2-6x+5}{x^6(x-1)^2}:=\frac{h(x)}{x^6(x-1)^2}.
 \end{align*}
What  left is to verify that $h(x)>0$ for $x\in [q_f(1),2]$. We have
\begin{align*}
& h'(x)=2(-3x^5+10x^4-4x^3-6x^2+3x-3)\\
 & h''(x)=2(-15x^4+40x^3-12x^2-12x+3)\\
 & h'''(x)=24(-5x^3+10x^2-2x-1)\\
 & h^{(4)}(x)=24(-15x^2+20x-2).
\end{align*}
It is easy to know that $ h^{(4)}(x)<0$ for $x\in [q_f(1), 2]$, which implies that $h'''(x)$ is strictly decreasing in $[q_f(1), 2]$. Since
\begin{align*}
h'''(q_f(1))&=24(-5(q_f(1))^3+10(q_f(1))^2-2q_f(1)-1)\\
&=24(3q_f(1)-6)<0,
\end{align*}
we have $h'''(x)<0$ for $x\in [q_f(1),2]$. So $h''(x)$ is strictly decreasing in $[q_f(1), 2]$. Now $h''(2)=22$ and so $h''(x)>0$ for $x\in [q_f(1),2]$. Note that
\begin{align*}
h'(x)&=2(-3x^5+10x^4-4x^3-6x^2+3x-3)\\
&=2((x^3-2x^2+x-1)(-3x^2+4x+7)+x^2+4).
\end{align*}
Thus, $h'(q_f(1))=2(q_f(1))^2+8>0$.
Finally we have $h(2)>0$ and
\begin{align*}
h(x)&=-x^6+4x^5-2x^4-4x^3+3x^2-6x+5\\
&=(x^3-2x^2+x-1)(-x^3+2x^2+3x-1)-2x+4.
\end{align*}
Thus, $h(q_f(1))=-2q_f(1)+4>0$. Therefore, $h(x)>0$ for $x\in [q_f(1),2]$ by Proposition \ref{simlemma}.

%
%
%


(5) We first consider the case that $k=1$ and $\textnormal{c}+\textnormal{d} <0450^\infty$. We have $m=3$ and $\textnormal{c}+\textnormal{d}\in \Omega _6^{\mathbb N}$.
The condition $\textnormal{c}+\textnormal{d} <0450^\infty$ implies $\textnormal{c}+\textnormal{d}\le0446^\infty$.
Take $q_1,q_2\in[q,4]\subseteq [q_f(3), 4]$ with $q_2>q_1$. We have
\begin{align*}
&f_{\textnormal{c},\textnormal{d}}(q_2)-f_{\textnormal{c},\textnormal{d}}(q_1)= \left ((4(c_i+d_i))_{q_2}-(3^{\infty})_{q_2}\right )-\left ((4(c_i+d_i))_{q_1}-(3^{\infty})_{q_1}\right )\\
&=4\left (\frac{1}{q_2}-\frac{1}{q_1}\right )+\sum _{k=1}^\infty (c_k+d_k)\left (\frac{1}{q_2^{k+1}}-\frac{1}{q_1^{k+1}}\right )-(3^{\infty})_{q_2}+(3^{\infty})_{q_1}.
\end{align*}
When $(\textnormal{c}+\textnormal{d})|_3= 044$, using the same argument as that in (4) we have
\begin{align*}
&4\left (\frac{1}{q_2}-\frac{1}{q_1}\right )+\sum _{k=1}^\infty (c_k+d_k)\left (\frac{1}{q_2^{k+1}}-\frac{1}{q_1^{k+1}}\right )\\
&\ge  (40446^\infty)_{q_2}-(40446^\infty)_{q_1}.
\end{align*}
Furthermore, when $(\textnormal{c}+\textnormal{d})|_4= 036$ we claim
\begin{align*}
&4\left (\frac{1}{q_2}-\frac{1}{q_1}\right )+\sum _{k=1}^\infty (c_k+d_k)\left (\frac{1}{q_2^{k+1}}-\frac{1}{q_1^{k+1}}\right )\\
&\ge  (40446^\infty)_{q_2}-(40446^\infty)_{q_1}
\end{align*}
also holds. This is because we have
$$
2\left (\frac{1}{q_2^4}-\frac{1}{q_1^4}\right )>\frac{1}{q_2^3}-\frac{1}{q_1^3}
$$
for $q_2>q_1$ with $q_1, q_2\in [q,4]\subseteq [q_f(3), 4]$. We leave the verification for readers. Therefore, we obtain that
$$
f_{\textnormal{c},\textnormal{d}}(q_2)-f_{\textnormal{c},\textnormal{d}}(q_1)
\ge  \left ((40446^\infty)_{q_2}-(3^{\infty})_{q_2}\right )-\left ((40446^\infty)_{q_1}-(3^{\infty})_{q_1}\right ).
$$

Now let's take
$$
g(x)=(40446^\infty)_x-(3^{\infty})_x=\frac{x^4-4x^3+4x^2+2}{x^4(x-1)}
$$
 and show that $g(x)$ is strictly increasing in $[q,4]\subseteq [q_f(3), 4]$. We have
 $$
 g'(x)=\frac{-x^5+8x^4-16x^3+8x^2-10x+8}{x^5(x-1)^2}:=\frac{h(x)}{x^5(x-1)^2}.
 $$
 In order to show $h(x)>0$ for $x\in [q, 4]$, we take derivatives of $h(x)$
\begin{align*}
& h'(x)=-5x^4+32x^3-48x^2+16x-10\\
 & h''(x)=4(-5x^3+24x^2-24x+4)\\
 & h'''(x)=12(-5x^2+16x-8).
\end{align*}
Easy to check that $ h'''(x)<0$ for $x\in [q_f(3), 4]$. Recall that $q_f(3)$ satisfies
\begin{equation}\label{qfm}
(q_f(3))^3-3(q_f(3))^2+q_f(3)-2=0.
\end{equation}
Using (\ref{qfm}) one can show that  $h'(q_f(3))>0$.
In fact,
\begin{align*}
&-5x^4+32x^3-48x^2+16x-10\\
=&(x^3-3x^2+x-2)(-5x+17)+8x^2-11x+24.
\end{align*}

Thus by (\ref{qfm}) we have
\begin{align*}
h'(q_f(3))&=8(q_f(3))^2-11q_f(3)+24\\
&=8\left ((q_f(3))^2-3q_f(3)+1\right )+13q_f(3)+16\\
&=\frac{16}{q_f(3)}+13q_f(3)+16>0.
\end{align*}
Similarly one can get $h''(q_f(3))>0$ and  $h(q_f(3))>0$ by means of (\ref{qfm}).
Finally $h(4)=96>0$.
Hence $h(x)>0$ for $x\in [q_f(3), 4]\supseteq [q, 4]$ by Proposition \ref{simlemma}.


%
%

Now we turn to consider the case $k=1$,  $(\textnormal{c}+\textnormal{d})|_1=1$ and $\textnormal{c}+\textnormal{d} <120^\infty$. Note that $\textnormal{c}+\textnormal{d}\in \Omega ^{\mathbb N}_6$. Thus
 $\textnormal{c}+\textnormal{d}\le 116^\infty$. As before, for $q_1, q_2\in [q, 4]\subseteq [q_f(3),4]$ with $q_1<q_2$ we have
\begin{align*}
&f_{\textnormal{c},\textnormal{d}}(q_2)-f_{\textnormal{c},\textnormal{d}}(q_1)
= (4(c_i+d_i))_{q_2}-(3^{\infty})_{q_2}-\left ((4(c_i+d_i))_{q_1}-(3^{\infty})_{q_1}\right )\\
\ge&(4116^\infty)_{q_2}-(3^{\infty})_{q_2}-(4116^\infty)_{q_1}+(3^{\infty})_{q_1}.
\end{align*}
Again let
\begin{align*}
g(x)=(4116^\infty)_x-(3^{\infty})_x
=\frac{x^3-3x^2+5}{x^3(x-1)},
\end{align*}
and try to prove it is strictly increasing for $x\in [q,4]\subseteq [q_f(3),4]$. We have
\begin{align*}
 g'(x)=\frac{-x^4+6x^3-3x^2-20x+15}{x^4(x-1)}:=\frac{h(x)}{x^4(x-1)}.
\end{align*}
Using the same argument as above, one can verify that $h''(x)<0$ for $x\in [q_f(3),4]$ so that $h'(x)$ is strictly decreasing in $[q_f(3),4]$. But $h'(x)>0$ in $[q_f(3),\alpha )$ and
$h'(x)<0$ in $(\alpha , 4]$ for some $q_f(3)<\alpha <4$. Hence $h(x)>0$ in $[q_f(3),4]$ by the fact both $h(q_f(3))$ and $h(4)$ are positive.


(6) We first consider the case that $k\ge2$ and  $\textnormal{c}+\textnormal{d}<(k-1)(m+2)0^\infty$ where $m=2k+1$. Take $q_1,q_2\in[q,m+1]\subseteq [q_f(2k+1), 2k+2]$ with $q_2>q_1$. We split the proof into two cases.

Case 1.  $(\textnormal{c}+\textnormal{d})|_1<(k-1)$. Then $\textnormal{c}+\textnormal{d}\le(k-2)(2m)^\infty$. Hence
\begin{align*}
f_{\textnormal{c},\textnormal{d}}(q_2)-f_{\textnormal{c},\textnormal{d}}(q_1)&=((m+1)(c_i+d_i))_{q_2}-(m^{\infty})_{q_2}\\
&-((m+1)(c_i+d_i))_{q_1}+(m^{\infty})_{q_1}\\
&\ge ((m+1)(k-2)(2m)^\infty)_{q_2}-(m^{\infty})_{q_2}\\
\quad&-\left (((m+1)(k-2)(2m)^\infty)_{q_1}-(m^{\infty})_{q_1}\right ).
\end{align*}
As before we take
\begin{align*}
g(x)&=((m+1)(k-2)(2m)^\infty)_x-(m^{\infty})_x\\
&=\frac{x^2-(k+4)x+3k+4}{x^2(x-1)},
\end{align*}
and prove $g(x)$ is strictly increasing in $[q_f(2k+1), m+1]\supseteq [q,m+1]$. We have
\begin{align*}
 g'(x)=\frac{-x^3+(2k+8)x^2-(10k+16)x+6k+8}{x^3(x-1)^2}.
\end{align*}
 Let $h(x)$ be the numerator of $g'(x)$. We shall prove $h(x)>0$ in $[q_f(2k+1), m+1]=[q_f(2k+1), 2k+2]$. Note that
%
\begin{align*}
 h'(x)=-3x^2+4(k+4)x-10k-16.
\end{align*}
The roots $x_1<x_2$ of $h'(x)=0$ satisfy for $k\ge3$
\begin{align*}
&x_1=\frac{2k+8-\sqrt{4k^2+2k+16}}{3}<k+1<q_f(2k+1)\\
&k+1<x_2=\frac{2k+8+\sqrt{4k^2+2k+16}}{3}< m+1=2k+2
\end{align*}
and for $k=2$
\begin{align*}
x_1=2<k+1<q_f(5)<k+2<x_2=6=2k+2.
\end{align*}
Hence $h(x)$ is strictly increasing in $[k+1,x_2]$ and strictly decreasing in $[x_2,m+1]$ for $k\ge3$, and $h(x)$ is strictly increasing in $[q_f(2k+1), m+1]=[q_f(5), 6]$ for $k=2$. However, for $k\geq 3$
$$
h(k+1)=k^3-k^2-5k-1>0\;\textrm{and}\; h(m+1)=4k^2+2k>0
$$
and $h(q_f(5))>0$ (case $k=2$). Thus,  $h(x)>0$ in $[q_f(2k+1), 2k+2]$.


Case 2. $(\textnormal{c}+\textnormal{d})|_1=k-1$ and $\textnormal{c}+\textnormal{d}<(k-1)(m+2)0^\infty$. In this case we have
$\textnormal{c}+\textnormal{d}\le(k-1)(m+1)(2m)^\infty$. And so
\begin{align*}
f_{\textnormal{c},\textnormal{d}}(q_2)-f_{\textnormal{c},\textnormal{d}}(q_1)&=((m+1)(c_i+d_i))_{q_2}-(m^{\infty})_{q_2}\\
&-((m+1)(c_i+d_i))_{q_1}+(m^{\infty})_{q_1}\\
&\ge ((m+1)(k-1)(m+1)(2m)^\infty)_{q_2}-(m^{\infty})_{q_2}\\
\quad&-\left (((m+1)(k-1)(m+1)(2m)^\infty)_{q_1}-(m^{\infty})_{q_1}\right ).
\end{align*}
It suffices to prove $g(x)$ is strictly increasing in $[q_f(2k+1), 2k+2]$ where
\begin{align*}
g(x)&=((m+1)(k-1)(m+1)(2m)^\infty)_x-(m^{\infty})_x\\
&=\frac{x^3-(k+3)x^2+(k+3)x+2k}{x^3(x-1)}.
\end{align*}
We have
\begin{align*}
g'(x)&=\frac{-x^4+2(k+3)x^3-(4k+12)x^2-(6k-6)x+6k}{x^4(x-1)^2}\\
&:=\frac{h(x)}{x^4(x-1)^2}
\end{align*}

and
\begin{align*}
  &h'(x)=2(-2x^3+3(k+3)x^2-(4k+12)x-3k+3)\\
&h''(x)=-4(3x^2-3(k+3)x+2k+6)\\
&h'''(x)=-4(6x-3(k+3)).
\end{align*}
So $h'(k+1)=2(k^3+5k^2-4k-2)>0$, $h''(k+1)=16k>0$ and $h'''(x)<0$ in $[k+1,m+1]$. In addition,
\begin{align*}
 &h(k+1)=k^4+4k^3-8k^2-6k-1>0\\
 &h(m+1)=h(2k+2)=(8k^2-6k-2)(2k+2)+6k>0.
 \end{align*}
Hence $h(x)>0$ for $x\in [k+1,m+1]\supseteq [q_f(2k+1), 2k+2]$ by Proposition \ref{simlemma}.

%

Now we turn to consider the case that $k\geq 2$, $(\textnormal{c}+\textnormal{d})|_1=k$ and $\textnormal{c}+\textnormal{d}<k(k+1)0^\infty$.
Then $\textnormal{c}+\textnormal{d}\le kk(2m)^\infty$, and so
\begin{align*}
f_{\textnormal{c},\textnormal{d}}(q_2)-f_{\textnormal{c},\textnormal{d}}(q_1)&=((m+1)(c_i+d_i))_{q_2}-(m^{\infty})_{q_2}\\
&-((m+1)(c_i+d_i))_{q_1}+(m^{\infty})_{q_1}\\
&\ge ((m+1)kk(2m)^\infty)_{q_2}-(m^{\infty})_{q_2}\\
\quad&-\left (((m+1)kk(2m)^\infty)_{q_1}-(m^{\infty})_{q_1}\right ).
\end{align*}
Below we verify that  $g(x)$ is strictly increasing in $[q,m+1]\subseteq [q_f(2k+1), m+1]$ where
\begin{align*}
g(x)&=((m+1)kk(2m)^\infty)_x-(m^{\infty})_x
=\frac{x^3-(k+2)x^2+3k+2}{x^3(x-1)}.
\end{align*}
We have
\begin{align*}
g'(x)&=\frac{-x^4+(2k+4)x^3-(k+2)x^2-(12k+8)x+3(3k+2)}{x^4(x-1)^2}\\
&:=\frac{h(x)}{x^4(x-1)^2},
\end{align*}
and
\begin{align*}
 &h'(x)=2(-2x^3+3(k+2)x^2-(k+2)x-6k-4)\\
 &h''(x)=-2(6x^2-6(k+2)x+k+2)\\
 &h'''(x)=-12(2x-(k+2)).
\end{align*}
Then $h'''(x)<0$ in $[k+1,m+1]$, $h''(k+1)=-2(-5k-4)>0$ and $h'(k+1)=2(k^3+5k^2-2)>0$. Since
\begin{align*}
 &h(k+1)=(k^3+4k^2-8k-7)(k+1)+9k+6>0\\
 &h(m+1)=(6k^2-2k-4)(2k+2)+9k+6>0,
\end{align*}
 we have $h(x)>0$ for $x\in [k+1,m+1]\supseteq [q_f(2k+1), 2k+2]$ by Proposition \ref{simlemma}.
\end{proof}

We deduce from Lemma \ref{9} the following result.


 \begin{corollary}\label{c1}
Let $m=2k+1$, $q\in[q_f(m),m+1]$ and $\textnormal{c},\textnormal{d}\in A'_q$ w.r.t. $\Omega _m$. If $f_{\textnormal{c},\textnormal{d}}(q)\le0$, then  $f_{\textnormal{c},\textnormal{d}}(t)$ is strictly increasing in $[q,m+1]$.
\end{corollary}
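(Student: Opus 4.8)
The plan is to recognise Corollary~\ref{c1} as nothing more than the logical recombination of the six parts of Lemma~\ref{9}. Parts (1)--(3) exhibit, for each value of $k$ with $m=2k+1$, a family of lexicographic thresholds above which $f_{\textnormal{c},\textnormal{d}}(q)>0$, whereas parts (4)--(6) show that \emph{below} exactly the same thresholds $f_{\textnormal{c},\textnormal{d}}$ is strictly increasing on $[q,m+1]$. Hence I only need to verify that these thresholds partition every admissible digit-sum $\textnormal{c}+\textnormal{d}$ into an ``$f(q)>0$'' region and an ``increasing'' region; then the hypothesis $f_{\textnormal{c},\textnormal{d}}(q)\le 0$ excludes the first region and forces $\textnormal{c}+\textnormal{d}$ into the second, which is the assertion.

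First I would fix $\textnormal{c},\textnormal{d}\in A'_q$ and run a case analysis on the leading digit $s:=(\textnormal{c}+\textnormal{d})|_1=c_1+d_1$, keeping in mind that $\textnormal{c},\textnormal{d}\in A'_q$ forces $c_1,d_1<\alpha_1(q)$ with $\alpha_1(q)\ge k+1$ on $[q_f(m),m+1]$. For $k=0$ there is a single threshold $00120^\infty$, and every $\textnormal{c}+\textnormal{d}$ is either $\ge 00120^\infty$ (part (1), $f(q)>0$) or $<00120^\infty$ (part (4), increasing), so the claim is immediate. For $k=1$ I would split on $s$: when $s\ge 2$ one has $\textnormal{c}+\textnormal{d}\ge 20^\infty>120^\infty$, so part (2) gives $f(q)>0$; when $s=1$ the threshold $120^\infty$ splits the class between parts (2) and (5); and when $s=0$ the threshold $0450^\infty$ splits it between parts (2) and (5). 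For $k\ge 2$ I would play the two thresholds $(k-1)(m+2)0^\infty$ and $k(k+1)0^\infty$ against $s$: the values $s\le k-2$ all lie below $(k-1)(m+2)0^\infty$ (part (6) case~1, increasing); $s=k-1$ is split by $(k-1)(m+2)0^\infty$ between part (3) case~1 and part (6) case~1; $s=k$ is split by $k(k+1)0^\infty$ between part (3) case~2 and part (6) case~2; and $s\ge k+1$ gives $\textnormal{c}+\textnormal{d}\ge(k+1)0^\infty>k(k+1)0^\infty$, so part (3) case~2 yields $f(q)>0$.

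With exhaustiveness and disjointness of this partition in hand, the conclusion is immediate by contraposition: a digit-sum in the ``$f(q)>0$'' region cannot satisfy $f_{\textnormal{c},\textnormal{d}}(q)\le 0$, so under the hypothesis $\textnormal{c}+\textnormal{d}$ must lie in the ``increasing'' region, where Lemma~\ref{9}(4)--(6) delivers strict monotonicity on $[q,m+1]$. The only real work --- and the step I would be most careful about --- is the combinatorial bookkeeping of the boundary comparisons guaranteeing that the partition is both exhaustive and non-overlapping: for instance that $(k-1)(m+2)0^\infty<k(k+1)0^\infty$, that $s\ge k+1$ really forces $\textnormal{c}+\textnormal{d}\ge k(k+1)0^\infty$, and that the second digit $m+2$ occurring in the threshold is admissible (since $c_2+d_2\le 2m$ and $m+2\le 2m$ for $m\ge 2$). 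No new analytic estimate is needed, all positivity and monotonicity having already been established in Lemma~\ref{9}.
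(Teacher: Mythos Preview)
Your proposal is correct and follows exactly the approach the paper intends: the paper gives no proof for this corollary beyond the sentence ``We deduce from Lemma~\ref{9} the following result,'' and your case analysis on $s=(\textnormal{c}+\textnormal{d})|_1$ is precisely the bookkeeping that makes this deduction explicit. The partition into ``$f(q)>0$'' and ``increasing'' regions using the thresholds from parts (1)--(3) and (4)--(6) is the whole content, and you have verified the boundary comparisons correctly.
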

Now we consider the case that $m$ is even.

\begin{lemma}\label{L8}
Let $m=2$, $q \in [q_f(2),3]$ and $\textnormal{c},\textnormal{d} \in A'_{q}$ w.r.t. $\Omega_{2}$.
\begin{enumerate}[\upshape (1)]
\item If $\textnormal{c}+\textnormal{d}\ge 0210^{\infty}$, then $f_{\textnormal{c},\textnormal{d}}(q)>0$.
\item 
    If $\textnormal{c}+\textnormal{d}< 0210^{\infty}$, then $f_{\textnormal{c},\textnormal{d}}(t)$ is strictly increasing for $t\in [q,3]$.
\end{enumerate}
\end{lemma}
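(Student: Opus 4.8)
The plan is to run the even-case analogue of Lemma~\ref{9}, specialised to $m=2$ (so $k=1$). Recall that $q_f(2)=1+\sqrt2$ is the root of $q^2-2q-1$ and that $\g(2)=2<q_f(2)$, so throughout $q\in[q_f(2),3]$ we have $q>2$ and $q^2-2q-1\ge0$. The digit sum $\textnormal{c}+\textnormal{d}$ ranges over $\Omega_4^{\mathbb N}$, and I shall use the form $f_{\textnormal{c},\textnormal{d}}(t)=\frac3t+\frac1t(\textnormal{c}+\textnormal{d})_t-\frac{2}{t-1}$ read off from \eqref{e34}. For part~(1) I first compute $\min_{\alpha\ge 0210^\infty}(\alpha)_q$ on $[q_f(2),3]$: the only competing ``carry'' is $10^\infty$ against $0210^\infty$, and since $\frac1q-(0210^\infty)_q=\frac{q^2-2q-1}{q^3}\ge0$ precisely when $q\ge q_f(2)$, the minimum is $(0210^\infty)_q$. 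Hence
\[
f_{\textnormal{c},\textnormal{d}}(q)\ge (30210^\infty)_q-(2^\infty)_q=\frac{q^4-3q^3+2q^2-q-1}{q^4(q-1)}=\frac{(q^2-2q-1)(q^2-q+1)}{q^4(q-1)}\ge0,
\]
the factorisation making the sign transparent since $q^2-q+1>0$. To upgrade $\ge$ to $>$ (needed only at $q=q_f(2)$, where the bracket vanishes), I note that every $\alpha\ge 0210^\infty$ realising the minimal value ends in $0^\infty$, whereas for $q<m+1=3$ Lemma~\ref{L22} forbids any nonzero element of $\mathcal U'_q$ from ending in $0^\infty$; since $\textnormal{c}+\textnormal{d}\ge0210^\infty\neq0^\infty$ cannot be finite, $(\textnormal{c}+\textnormal{d})_q>(0210^\infty)_q$ strictly, giving $f_{\textnormal{c},\textnormal{d}}(q)>0$.

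For part~(2), fix $q_1<q_2$ in $[q,3]\subseteq[q_f(2),3]$ and write, as in Lemma~\ref{9}(4),
\[
f_{\textnormal{c},\textnormal{d}}(q_2)-f_{\textnormal{c},\textnormal{d}}(q_1)=3w_0+\sum_{k\ge1}(c_k+d_k)w_k-(2^\infty)_{q_2}+(2^\infty)_{q_1},\qquad w_k=\tfrac{1}{q_2^{k+1}}-\tfrac{1}{q_1^{k+1}}<0.
\]
Because the weights $w_k$ are negative, replacing $(c_k+d_k)$ by any \emph{digit-wise} upper bound only decreases the right-hand side. Now $\textnormal{c}+\textnormal{d}<0210^\infty$ forces $c_1+d_1=0$, leaving two regimes: (A) $c_2+d_2=2$, which forces $c_3+d_3=0$ and hence $\textnormal{c}+\textnormal{d}\le0204^\infty$ digit-wise; and (B) $c_2+d_2\le1$, hence $\textnormal{c}+\textnormal{d}\le014^\infty$ digit-wise. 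Thus it suffices to show that
\[
g_A(x)=(30204^\infty)_x-(2^\infty)_x=\frac{x^4-3x^3+2x^2-2x+4}{x^4(x-1)},\qquad g_B(x)=(3014^\infty)_x-(2^\infty)_x=\frac{x^3-3x^2+x+3}{x^3(x-1)}
\]
are strictly increasing on $[q_f(2),3]$, since then $f_{\textnormal{c},\textnormal{d}}(q_2)-f_{\textnormal{c},\textnormal{d}}(q_1)\ge g_\bullet(q_2)-g_\bullet(q_1)>0$.

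To verify the monotonicity I compute $g_B'(x)=h(x)/\big(x^4(x-1)^2\big)$ with $h(x)=-x^4+6x^3-6x^2-10x+9$, and apply Proposition~\ref{simlemma} on $[q_f(2),3]$: using $q_f(2)^2=2q_f(2)+1$ one reduces the evaluations to $h(q_f(2))=6-4\sqrt2>0$, $h'(q_f(2))=4q_f(2)>0$, $h''(q_f(2))=12(\sqrt2-1)>0$, $h(3)=6>0$, while $h'''(x)=-12(2x-3)<0$ throughout, so condition~(I) holds and $h>0$. Monotonicity of $g_A$ then follows cheaply from the identity $g_A(x)=g_B(x)+\frac{x-4}{x^4}$ together with $\big(\frac{x-4}{x^4}\big)'=\frac{16-3x}{x^5}>0$ on $[q_f(2),3]$.

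The main obstacle is the reduction step in~(2): the lexicographic bound $\textnormal{c}+\textnormal{d}<0210^\infty$ does \emph{not} yield a single digit-wise envelope, because the admissible sequence $014^\infty$ (case~B) is not dominated digit-wise by the lex-largest admissible sequence $0204^\infty$ (they disagree at position $3$). As the term-by-term estimate with negative weights requires a digit-wise upper bound, the two prefix-regimes must be bounded separately; and since $g_B-g_A=\frac{4-x}{x^4}$ is decreasing on $[q_f(2),3]$, case~(B) has the smaller increments and is the binding one, so its monotonicity must be proved directly. This is exactly the analogue of the $0011$-versus-$0002$ split in Lemma~\ref{9}(4), and isolating $014^\infty$ as the correct envelope is the delicate point; once it is found, the positivity of $h$ via Proposition~\ref{simlemma} is routine. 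The only other subtlety, the strict inequality in~(1) at $q=q_f(2)$, is dispatched by the finiteness obstruction of Lemma~\ref{L22}.
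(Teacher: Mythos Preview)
Your proof is correct and follows essentially the same strategy as the paper's. Two tactical differences are worth noting. In part~(1) you factor the numerator as $(q^2-2q-1)(q^2-q+1)$, which is cleaner than the paper's derivative check, and you make explicit the strictness argument at $q=q_f(2)$ via the finiteness obstruction; the paper simply writes a strict $>$ without justification at that point, so your treatment is actually more careful. In part~(2) both proofs split into the prefixes $02$ and $\le 01$; the paper then reduces the $02$-case to the single envelope $g_B=(3014^\infty)_x-(2^\infty)_x$ by the weight inequality $w_2>4w_3$ (your remark about the ``binding'' case already anticipates this), whereas you keep two envelopes and handle $g_A$ via the identity $g_A=g_B+\frac{x-4}{x^4}$. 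Either device eliminates the need to analyse $g_A$ from scratch, and your computation of $h(x)=-x^4+6x^3-6x^2-10x+9$ and its values at $q_f(2)$ agrees with what the paper would obtain.
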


\begin{proof}

(1)  We have $q_f(2)=1+\sqrt{2}$ by (\ref{qfeven}). For any $q\in [q_f(2), 3]$
\begin{align*}
\min _{\alpha \geq 0210^\infty }(\alpha )_q=\min \{(0210^\infty )_q, (10^\infty )_q, (030^\infty )_q\}=(0210^\infty )_q.
\end{align*}
Thus, when  $\textnormal{c}+\textnormal{d}\ge 0210^{\infty}$ we have
\begin{align*}
f_{\textnormal{c},\textnormal{d}}(q)&=(3(c_i+d_i))_{q}-(2^{\infty})_{q}>(30210^{\infty})_q-(2^\infty)_q\\
&=\frac{3}{q}+\frac{2}{q^3}+\frac{1}{q^4}-\frac{2}{q-1}=\frac{q^4-3q^3+2q^2-q-1}{q^4(q-1)}.
\end{align*}
We need to check  $g(x):=x^4-3x^3+2x^2-x-1\geq 0$ for $x\in [q_f(2), 3]$. Note that
\begin{align*}
g'(x)=4x^3-9x^2+4x-1 \;\textrm{and}\; g''(x)=12x^2-18x+4.
\end{align*}
 We have $g''(x)>0$ for $x\in [q_f(2), 3]$. Since $g'(q_f(2))=4+6\sqrt{2}$ so $g'(x)>0$ for $x\in [q_f(2), 3]$. Finally, it follows from $g(q_f(2))=0$ that $g(x)\ge 0$ for $x\in [q_f(2), 3]$.


(2) We now consider the case  $\textnormal{c}+\textnormal{d}< 0210^{\infty}$. Take $q_1,q_2\in[q,3]$ with $q_2>q_1$.
Suppose that $(\textnormal{c}+\textnormal{d})|_2\le01$. Then $(\textnormal{c}+\textnormal{d})<014^\infty$ and
\begin{align*}
f_{\textnormal{c},\textnormal{d}}(q_2)-f_{\textnormal{c},\textnormal{d}}(q_1)
=&(3(c_i+d_i))_{q_2}-(2^{\infty})_{q_2}-(3(c_i+d_i))_{q_1}+(2^{\infty})_{q_1}\\
\ge& (3014^\infty)_{q_2}-(2^{\infty})_{q_2}-\left (3014^\infty)_{q_1}-(2^{\infty})_{q_1}\right ).
\end{align*}
Let 
\begin{align*}
g(x)&=(3014^\infty)_x-(2^{\infty})_x
=\frac{x^3-3x^2+x+3}{x^3(x-1)}.
\end{align*}
In the same way as in Lemma \ref{9} one can show that $g(x)$ is strictly increasing in $[q_f(2), 3]$.

Furthermore, when $(\textnormal{c}+\textnormal{d})|_2=02$ and $\textnormal{c}+\textnormal{d}<0210^\infty$ we claim
\begin{align*}
f_{\textnormal{c},\textnormal{d}}(q_2)-f_{\textnormal{c},\textnormal{d}}(q_1)
=&(3(c_i+d_i))_{q_2}-(2^{\infty})_{q_2}-(3(c_i+d_i))_{q_1}+(2^{\infty})_{q_1}\\
\ge& (3014^\infty)_{q_2}-(2^{\infty})_{q_2}-\left (3014^\infty)_{q_1}-(2^{\infty})_{q_1}\right )
\end{align*}
also holds. This is because we have
$$
\frac{1}{q_2^3}-\frac{1}{q_1^3}>\frac{4}{q_2^4}-\frac{4}{q_1^4}
$$
for $q_2>q_1$ with $q_1, q_2\in [q,2]\subseteq [q_f(2), 3]$.

\end{proof}

\begin{lemma}\label{8}
Let $m=2k$, $q \in [q_f(m),m+1]$ and $\textnormal{c},\textnormal{d} \in A'_{q}$ over $\Omega_{m}$.
\begin{enumerate}[\upshape (1)]
\item Let $k\ge 1$. If  $\textnormal{c}+\textnormal{d}\ge k0^\infty$, then $f_{\textnormal{c},\textnormal{d}}(q)>0$.
\item Let $k\ge2$. We have $f_{\textnormal{c},\textnormal{d}}(q)> 0$ if $\textnormal{c},\textnormal{d}$ satisfy  one of the following conditions:

 (i) $(\textnormal{c}+\textnormal{d})|_1=k-1$ and $\textnormal{c}+\textnormal{d}\ge (k-1)(m-1)(k+1)0^\infty$;

 (ii) $(\textnormal{c}+\textnormal{d})|_1=k-2$ and $\textnormal{c}+\textnormal{d} \ge (k-2)(2m-1)(k+1)0^\infty$.

\item   Let $k\ge2$. $f_{\textnormal{c},\textnormal{d}}(t)$ is strictly increasing in $[q,m+1]$ if $\textnormal{c},\textnormal{d}$ satisfy  one of the following conditions:

 (i) $(\textnormal{c}+\textnormal{d})|_1=k-1$ and $\textnormal{c}+\textnormal{d}<(k-1)(m-1)(k+1)0^\infty$;

 (ii) $\textnormal{c}+\textnormal{d}<(k-2)(2m-1)(k+1)0^\infty$.
\end{enumerate}
\end{lemma}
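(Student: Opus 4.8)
The plan is to mirror the template already used for the odd case in Lemma~\ref{9} and for $m=2$ in Lemma~\ref{L8}, working from the closed form
\[
f_{\textnormal{c},\textnormal{d}}(t)=\frac{m+1}{t}+\frac{1}{t}(\textnormal{c}+\textnormal{d})_{t}-\frac{m}{t-1}
\]
(obtained from \eqref{e34}, with $m=2k$) and splitting the work by two distinct mechanisms: a pointwise lower bound on $(\textnormal{c}+\textnormal{d})_{q}$ for the positivity items (1)--(2), and a sign analysis of the increment $f_{\textnormal{c},\textnormal{d}}(q_2)-f_{\textnormal{c},\textnormal{d}}(q_1)$ for the monotonicity item (3).

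For items (1) and (2), under each hypothesis $\textnormal{c}+\textnormal{d}\ge w$ I would first minimise $(\alpha)_{q}$ over all sequences $\alpha\ge w$. As in Lemma~\ref{9}, this minimum is attained at one of finitely many ``rounded'' candidates ($w$ itself, or a prefix of $w$ with one digit raised and the tail set to $0$), and for $q\in[q_f(2k),2k+1]$ the minimiser is singled out using $q>\g(2k)=k+1$; concretely the comparison $\tfrac{k+1-q}{q^{3}}<0$ shows that the threshold word beats its competitor of the form $(\cdots)(2k)0^{\infty}$. Substituting the minimiser gives $f_{\textnormal{c},\textnormal{d}}(q)\ge P(q)/\bigl(q^{j}(q-1)\bigr)$ with an explicit $P$. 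For item~(1) this $P$ is exactly $q^{2}-(k+1)q-k$, so positivity is immediate from \eqref{qfeven}; strictness at the left endpoint follows because the extremal word $k0^{\infty}$ has a last nonzero digit and hence is never a unique expansion for $q<m+1$, so $\textnormal{c},\textnormal{d}\in A'_{q}$ forces a strict excess. For item~(2) one gets degree-$4$ numerators, whose positivity on $[q_f(2k),2k+1]$ I would establish through Proposition~\ref{simlemma}, reducing powers of $q_f(2k)$ by the relation $q_f(2k)^{2}=(k+1)q_f(2k)+k$ at the left endpoint and evaluating $x=2k+1$ at the right.

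For item (3) I would fix $q_1<q_2$ in $[q,2k+1]$ and write the increment as $(m+1)\bigl(q_2^{-1}-q_1^{-1}\bigr)+\sum_i(c_i+d_i)\bigl(q_2^{-(i+1)}-q_1^{-(i+1)}\bigr)-\bigl((m^{\infty})_{q_2}-(m^{\infty})_{q_1}\bigr)$. Since each weight $q_2^{-j}-q_1^{-j}<0$, replacing $\textnormal{c}+\textnormal{d}$ by the largest admissible sequence only decreases this difference, so under each $<$-hypothesis I bound $\textnormal{c}+\textnormal{d}$ above by the appropriate ``round-down'' word of the form $\cdots(a-1)(2m)^{\infty}$. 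This opens several sub-cases according to the leading digits; exactly as in Lemma~\ref{L8}(2) and Lemma~\ref{9}(6) I would collapse them to a single worst word per case by auxiliary weight comparisons such as $q_2^{-3}-q_1^{-3}$ against a multiple of $q_2^{-4}-q_1^{-4}$, valid since $q_1,q_2\ge q_f(2k)>k+1$. The increment is then bounded below by $g(q_2)-g(q_1)$ for an explicit $g(x)=(\text{word})_x-(m^{\infty})_x$, and it remains to show each such $g$ is strictly increasing, i.e.\ that the numerator $h(x)$ of $g'(x)$ is positive on $[q_f(2k),2k+1]$; this is again handled by Proposition~\ref{simlemma}, computing $h''',h'',h',h$ and evaluating the endpoints via the quadratic identity for $q_f(2k)$ and via $x=2k+1$. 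Combining (1)--(3) then yields the even-case analogue of Corollary~\ref{c1}.

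The hard part will be item~(3). One must pin down the correct worst-case word in every sub-case and justify the auxiliary weight inequalities that merge them, and then verify the hypotheses of Proposition~\ref{simlemma} for the resulting families of polynomials $h(x)$ \emph{uniformly in $k\ge2$}. Because the coefficients of $h$ and its endpoint values are themselves polynomials in $k$, the genuine difficulty is keeping these positive for all $k\ge2$ at once rather than for a single base value; the identity $q_f(2k)^{2}=(k+1)q_f(2k)+k$ is the principal tool that keeps the left-endpoint estimates tractable.
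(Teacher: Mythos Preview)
Your plan is essentially the paper's own approach: bound $(\textnormal{c}+\textnormal{d})_q$ from below by an extremal word for items (1)--(2), reduce the increment $f_{\textnormal{c},\textnormal{d}}(q_2)-f_{\textnormal{c},\textnormal{d}}(q_1)$ to $g(q_2)-g(q_1)$ for an explicit $g$ in item (3), and verify positivity of the resulting polynomial numerators via Proposition~\ref{simlemma} together with the relation $q_f(2k)^2=(k+1)q_f(2k)+k$. One tactical difference: you propose to ``collapse'' the sub-cases of item (3) to a single worst word per hypothesis (i), (ii) via auxiliary weight inequalities, citing Lemma~\ref{9}(6) as precedent---but in fact Lemma~\ref{9}(6) does \emph{not} collapse its two cases, and the paper's proof of the present lemma treats five sub-cases separately (two for (i), three for (ii), with bounds $(k-1)(m-2)(2m)^\infty$, $(k-1)(m-1)k(2m)^\infty$, $(k-2)(2m-1)k(2m)^\infty$, $(k-2)(2m-2)(2m)^\infty$, $(k-3)(2m)^\infty$). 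Collapsing across different leading digits such as $k-2$ versus $k-3$ would require weight inequalities that are not obviously available uniformly in $k$, so you should be prepared to fall back to the separate-case analysis; otherwise your outline matches the paper's argument.
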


\begin{proof}

(1)  When $(\textnormal{c}+\textnormal{d})\ge k0^{\infty}$ we have
\begin{align*}
f_{\textnormal{c},\textnormal{d}}(q)&=((m+1)(c_i+d_i))_{q}-(m^{\infty})_{q}\\
&>((m+1)k0^{\infty})_q-(m^\infty)_q\\
&=\frac{m+1}{q}+\frac{k}{q^2}-\frac{m}{q-1}=\frac{q^2-(k+1)q-k}{q^2(q-1 )}\ge 0.
\end{align*}
The last equality follows from the fact that $x^2-(k+1)x-k$ is strictly increasing in $[q_f(m),m+1]$ and $(q_{f}(2k))^2-(k+1)q_{f}(2k)-k=0$.

(2) We first consider the case (i):  $(\textnormal{c}+\textnormal{d})|_1=k-1$ and $\textnormal{c}+\textnormal{d}\ge(k-1)(m-1)(k+1)0^{\infty}$. Note that
for any $q\in [q_f(2k), 2k+1]$
\begin{align*}
\min _{\alpha \geq (m-1)(k+1)0^{\infty} }(\alpha )_q&=\min \{((m-1)(k+1)0^{\infty})_q, (m0^{\infty})_q\}\\
&=((m-1)(k+1)0^{\infty})_q.
\end{align*}
 Thus we have
\begin{align*}
f_{\textnormal{c},\textnormal{d}}(q)&=((m+1)(c_i+d_i))_{q}-(m^\infty)_q\\
&\ge((m+1)(k-1)(m-1)(k+1)0^{\infty})_q-(m^\infty)_q\\
&=\frac{q^4-(k+2)q^3+kq^2-(k-2)q-k-1}{q^4(q-1 )}.
\end{align*}
Let  $g(x)=x^4-(k+2)x^3+kx^2-(k-2)x-k-1$ and we shall show it is positive for $x \in [q_f(2k), 2k+1]$. We have
\begin{align*}
&g'(x)=4x^3-3(k+2)x^2+2kx-k+2 \\
&g''(x)=2(6x^2-3(k+2)x+k).
\end{align*}
The roots of $g''(x)$ satisfy
\begin{align*}
x_1<x_2=\frac{3k+6+\sqrt{9k^2+12k+36}}{12}<k+1<q_f(2k)
\end{align*}
and $g'(k+1)=k^3+2k^2-2k>0$. Hence $g(x)$ is strictly increasing in $[q_f(2k), 2k+1]$. Next we shall show that  $g(q_f(2k))>0$ for $k\ge2$.
 Recall $q_f(2k)=(k+1+\sqrt{k^2+6k+1})/2$ is the largest real root of $x^2-(k+1)x-k=0$ and
 \begin{align*}
g(x)&=x^4-(k+2)x^3+kx^2-(k-2)x-k-1\\
&=(x^2-x+k-1)(x^2-(k+1)x-k)+(k-1)^2(x+1)-2.
\end{align*}
Thus $g(q_f(2k))>0$ for $k\ge2$.



Now we consider the  case (ii):  $(\textnormal{c}+\textnormal{d})|_1=k-2$ and $\textnormal{c}+\textnormal{d} \ge (k-2)(2m-1)(k+1)0^\infty$. Note that
for any $q\in [q_f(2k), 2k+1]$
\begin{align*}
\min _{\alpha \geq (2m-1)(k+1)0^{\infty} }(\alpha )_q&=\min \{((2m-1)(k+1)0^{\infty})_q, (2m0^{\infty})_q\}\\
&=((2m-1)(k+1)0^{\infty})_q.
\end{align*}
Thus we have
\begin{align*}
f_{\textnormal{c},\textnormal{d}}(q)&=((m+1)(c_i+d_i))_q-(m^\infty)_q\\
&\ge((m+1)(k-2)(2m-1)(k+1)0^{\infty})_q-(m^\infty)_q\\
&=\frac{q^4-(k+3)q^3+(3k+1)q^2-(3k-2)q-k-1}{q^4(q-1 )}.
\end{align*}
Let $g(x)=x^4-(k+3)x^3+(3k+1)x^2-(3k-2)x-k-1$. We have
\begin{align*}
&g'(x)=4x^3-3(k+3)x^2+2(3k+1)x-3k+2 \\
&g''(x)=2(6x^2-3(k+3)x+3k+1).
\end{align*}
The roots of $g''(x)$ satisfy for $k\ge2$
\begin{align*}
x_1<x_2=\frac{3k+9+\sqrt{9k^2-18k+57}}{12}<k+1<q_f(2k).
\end{align*}
Thus $g'(x)>0$ in $[k+1, 2k+1]$ by $g'(k+1)>0$. So $g(x)$ is strictly increasing in $[q_f(2k), 2k+1]$. Finally
\begin{align*}
g(x)&=x^4-(k+3)x^3+(3k+1)x^2-(3k-2)x-k-1\\
&=(x^2-2x+2k-1)(x^2-(k+1)x-k)\\
&\;\;\;+(2k^2-4k+1)x+(2k^2-2k-1).
\end{align*}
So $g(q_f(2k))>0$ for $k\ge2$.


(3) Let $q_1,q_2\in[q,m+1]=[q, 2k+1]$ with $q_2>q_1$.

(i) Suppose that $(\textnormal{c}+\textnormal{d})|_1=k-1$ and $\textnormal{c}+\textnormal{d}<(k-1)(m-1)(k+1)0^\infty$.
We split the proof into two cases.

Case 1.  $(\textnormal{c}+\textnormal{d})|_1=k-1$ and $(\textnormal{c}+\textnormal{d})<(k-1)(m-1)0^\infty$. Then $(\textnormal{c}+\textnormal{d})\le(k-1)(m-2)(2m)^\infty$. Thus
\begin{align*}
f_{\textnormal{c},\textnormal{d}}(q_2)-f_{\textnormal{c},\textnormal{d}}(q_1)&=((m+1)(c_i+d_i))_{q_2}-(m^{\infty})_{q_2}\\
&-((m+1)(c_i+d_i))_{q_1}+(m^{\infty})_{q_1}\\
&\ge ((m+1)(k-1)(m-2)(2m)^\infty)_{q_2}-(m^{\infty})_{q_2}\\
\quad&-\left (((m+1)(k-1)(m-2)(2m)^\infty)_{q_1}-(m^{\infty})_{q_1}\right ).
\end{align*}
We shall prove $g(x)$  is strictly increasing in $[q,m+1]$ where
\begin{align*}
g(x)&=((m+1)(k-1)(m-2)(2m)^\infty)_x-(m^{\infty})_x\\
&=\frac{x^3-(k+2)x^2+(k-1)x+2k+2}{x^3(x-1)}.
\end{align*}
Note that 
\begin{align*}
g'(x)&=\frac{-x^4+(2k+4)x^3-(4k-1)x^2-(6k+10)x+6(k+1)}{x^4(x-1)^2}\\
&:=\frac{h(x)}{x^4(x-1)^2},
\end{align*}
and
\begin{align*}
 &h'(x)=2(-2x^3+3(k+2)x^2-(4k-1)x-3k-5)\\
 &h''(x)=2(-6x^2+6(k+2)x-4k+1).
\end{align*}
The roots $x_1,x_2$ of $h''(x)$  satisfy for $k\ge2$
\begin{align*}
x_1<k+1<x_2=\frac{3k+6+\sqrt{9k^2+12k+42}}{6}<k+2.
\end{align*}
Hence $h'(x)$ is strictly increasing in $[k+1,x_2]$ and strictly decreasing in $[x_2,m+1]$ for $k\ge2$. Note that
$$
h'(k+1)=2(k^3+2k^2+3k)>0.
$$
In addition, we have $h'(m+1)=h'(2k+1)$ is positive when $k=2$, but negative for $k\geq 3$. This means that in $[k+1, m+1]$ the function $h(x)$ is strictly increasing when $k=2$, and $h(x)$ is strictly increasing firstly and then strictly decreasing when $k\geq 3$. However, we have
\begin{align*}
 &h(k+1)=(k^3+k^2-2k)(k+1)>0\\
 &h(m+1)=(4k^2+4k-6)(2k+1)+6k+6>0.
\end{align*}
Therefore,  $h(x)>0$ for $x\in [q_f(2k),m+1]$.

Case 2.  $(\textnormal{c}+\textnormal{d})|_2=(k-1)(m-1)$ and $(\textnormal{c}+\textnormal{d})< (k-1)(m-1)(k+1)0^\infty$. Then $(\textnormal{c}+\textnormal{d})\le (k-1)(m-1)k(2m)^\infty$. Thus
\begin{align*}
f_{\textnormal{c},\textnormal{d}}(q_2)-f_{\textnormal{c},\textnormal{d}}(q_1)&=((m+1)(c_i+d_i))_{q_2}
-(m^{\infty})_{q_2}\\
&\;\;\;-((m+1)(c_i+d_i))_{q_1}+(m^{\infty})_{q_1}\\
&\ge ((m+1)(k-1)(m-1)k(2m)^\infty)_{q_2}-(m^{\infty})_{q_2}\\
\quad& \;\;\;-\left (((m+1)(k-1)(m-1)k(2m)^\infty)_{q_1}-(m^{\infty})_{q_1}\right ).
\end{align*}
We shall show that  $g(x)$ is strictly increasing in $[q,m+1]$, where
\begin{align*}
g(x)&=((m+1)(k-1)(m-1)k(2m)^\infty)_x-(m^{\infty})_x\\
&=\frac{x^4-(k+2)x^3+kx^2-(k-1)x+3k}{x^4(x-1)}.
\end{align*}
Note that $g'(x)=\frac{h(x)}{x^5(x-1)^2}$ where
\begin{align*}
h(x)
&=-x^5+(2k+4)x^4-(4k+2)x^3\\
&+(6k-4)x^2-(18k-3)x+12k.
\end{align*}
Easy to check that $h'''(x)=12(-5x^2+4(k+2)x-2k-1)<0$ for $x\in [k+1, m+1]$. One can check that
$$
h''(k+1)>0, h'(k+1)>0, h(k+1)>0\;\textrm{ and}\;  h(m+1)>0.
$$
Therefore, $h(x)>0$ for $x\in [k+1, m+1]\supseteq [q_f(2k),m+1]$ by Proposition \ref{simlemma}.

(ii) Suppose that $(\textnormal{c}+\textnormal{d})<(k-2)(2m-1)(k+1)0^\infty$. We split the proof into three cases.

Case 1.  $(\textnormal{c}+\textnormal{d})|_2=(k-2)(2m-1)$ and  $(\textnormal{c}+\textnormal{d})<(k-2)(2m-1)(k+1)0^\infty$. Then $(\textnormal{c}+\textnormal{d})\le(k-2)(2m-1)k(2m)^\infty$. Thus
\begin{align*}
f_{\textnormal{c},\textnormal{d}}(q_2)-f_{\textnormal{c},\textnormal{d}}(q_1)
&=((m+1)(c_i+d_i))_{q_2}-(m^{\infty})_{q_2}\\
&\;\;\; -((m+1)(c_i+d_i))_{q_1}+(m^{\infty})_{q_1}\\
&\ge  ((m+1)(k-2)(2m-1)k(2m)^\infty)_{q_2}-(m^{\infty})_{q_2}\\
 & \;\;\; -\left (((m+1)(k-2)(2m-1)k(2m)^\infty)_{q_1}-(m^{\infty})_{q_1}\right ).
\end{align*}
As before, let $g(x)=((m+1)(k-2)(2m-1)k(2m)^\infty)_x-(m^{\infty})_x$ and try  to prove it is strictly increasing in $[q,m+1]$. Note that
\begin{align*}
g(x)&=((m+1)(k-2)(2m-1)k(2m)^\infty)_x-(m^{\infty})_x\\
&=\frac{x^4-(k+3)x^3+(3k+1)x^2-(3k-1)x+3k}{x^4(x-1)},
\end{align*}
and
\begin{align*}
g'(x)
&=\frac{-x^5+(2k+6)x^4-(10k+6)x^3
+(18k-2)x^2-(24k-3)x+12k}{x^5(x-1)^2}\\
&:=\frac{h(x)}{x^5(x-1)^2}.
\end{align*}
It is easy to see that $h'''(x)=12(-5x^2+4(k+3)x-5k-3)<0$ for $x\in  [q_f(2k), 2k+1]$. Moreover, one can check that
$$
h''(q_f(2k))>0, h'(q_f(2k))>0, h(q_f(2k))>0\; \textrm{ and}\; h(m+1)>0.
$$
 From Proposition \ref{simlemma}  it follows that $h(x)>0$ for
$x\in  [q_f(2k), 2k+1]$.

Case 2. $(\textnormal{c}+\textnormal{d})|_1=k-2$ and $(\textnormal{c}+\textnormal{d})<(k-2)(2m-1)0^\infty$. Then $(c_i+d_i)\le(k-2)(2m-2)(2m)^\infty$, and
\begin{align*}
f_{\textnormal{c},\textnormal{d}}(q_2)-f_{\textnormal{c},\textnormal{d}}(q_1)&=((m+1)(c_i+d_i))_{q_2}-(m^{\infty})_{q_2}\\
&\;\;\;-((m+1)(c_i+d_i))_{q_1}+(m^{\infty})_{q_1}\\
&\ge ((m+1)(k-2)(2m-2)(2m)^\infty)_{q_2}-(m^{\infty})_{q_2}\\
\quad&\;\;\;-\left (((m+1)(k-2)(2m-2)(2m)^\infty)_{q_1}-(m^{\infty})_{q_1}\right ).
\end{align*}
In the following we show that $g(x)$ is strictly increasing in $[q_f(2k), m+1]$ where
\begin{align*}
g(x)&=((m+1)(k-2)(2m-2)(2m)^\infty)_x-(m^{\infty})_x\\
&=\frac{x^3-(k+3)x^2+3kx+2}{x^3(x-1)}.
\end{align*}
Note that
\begin{align*}
g'(x)&=\frac{-x^4+(2k+6)x^3-(10k+3)x^2+(6k-8)x+6}{x^4(x-1)^2}\\
&:=\frac{h(x)}{x^4(x-1)^2},
\end{align*}
and
\begin{align*}
 &h'(x)=2(-2x^3+3(k+3)x^2-(10k+3)x+3k-4)\\
 &h''(x)=2(-6x^2+6(k+3)x-10k-3).
\end{align*}
We have $h'''(x)=-24x+12(k+3)<0$ in $[k+1, m+1]$. In addition, one can check
$$
h''(k+1)>0, h'(k+1)>0, h(k+1)\geq 0\;\textrm{and}\; h(m+1)>0.
$$
 From Proposition \ref{simlemma}  it follows that $h(x)>0$ for
$x\in  [q_f(2k), 2k+1]$. Here we like to remark that $ h(k+1)=0$ for $k=2$ however $ h(k+1)>0$ for $k\geq 3$. This does not affect us to
get the result in  $[q_f(2k), 2k+1]\subsetneqq [k+1, 2k+1]$.

Case 3.  $k\ge3$ and $(\textnormal{c}+\textnormal{d})<(k-2)0^\infty$. Then $(\textnormal{c}+\textnormal{d})\le(k-3)(2m)^\infty$. So
\begin{align*}
f_{\textnormal{c},\textnormal{d}}(q_2)-f_{\textnormal{c},\textnormal{d}}(q_1)
&=((m+1)(c_i+d_i))_{q_2}-(m^{\infty})_{q_2}\\
&\;\;\;-((m+1)(c_i+d_i))_{q_1}+(m^{\infty})_{q_1}\\
&\ge  ((m+1)(k-3)(2m)^\infty)_{q_2}-(m^{\infty})_{q_2}\\
\quad &\;\;\;-\left ((m+1)(k-3)(2m)^\infty)_{q_1}-(m^{\infty})_{q_1}\right ).
\end{align*}
Again one needs to prove $g(x)$ is strictly increasing in $[q_f(2k),m+1]$ where
\begin{align*}
g(x)&=((m+1)(k-3)(2m)^\infty)_x-(m^{\infty})_x\\
&=\frac{x^2-(k+4)x+3k+3}{x^2(x-1)}.
\end{align*}
Note that 
\begin{align*}
g'(x)&=\frac{-x^3+(2k+8)x^2-(10k+13)x+6(k+1)}{x^3(x-1)^2}\\
&:=\frac{h(x)}{x^3(x-1)^2}.
\end{align*}
The roots  $x_1,x_2$ of $h'(x)=-3x^2+4(k+4)x-10k-13=0$ satisfy
\begin{align*}
&x_1<q_f(6)<7<x_2\quad\text{for $k=3$}\\
&x_1<q_f(2k)<x_2<2k+1\quad\text{for $k\ge4$}.
\end{align*}
Moreover one can check that $h(q_f(2k))>0$ and $h(2k+1)>0$. Thus
 $h(x)>0$ in $[q_f(2k), 2k+1]$.
\end{proof}

Combing Lemma \ref{L8} with \ref{8} we have

\begin{corollary}\label{c2}
Let $m=2k$, $q\in[q_f(m),m+1]$ and $\textnormal{c}, \textnormal{d}\in A'_q$. If $f_{\textnormal{c},\textnormal{d}}(q)\le0$, then  $f_{\textnormal{c},\textnormal{d}}(t)$ is strictly increasing in $[q,m+1]$.
\end{corollary}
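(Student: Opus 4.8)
The plan is to obtain Corollary \ref{c2} as a purely logical assembly of Lemmas \ref{L8} and \ref{8}: each of those lemmas pairs a ``$f_{\textnormal{c},\textnormal{d}}(q)>0$'' regime (when $\textnormal{c}+\textnormal{d}$ lies above a threshold word) with a ``$f_{\textnormal{c},\textnormal{d}}$ strictly increasing'' regime (when $\textnormal{c}+\textnormal{d}$ lies below the \emph{same} threshold word), and the two regimes partition all admissible $\textnormal{c}+\textnormal{d}$. Thus, reading the monotonicity statements in contrapositive form, the hypothesis $f_{\textnormal{c},\textnormal{d}}(q)\le0$ should force $\textnormal{c}+\textnormal{d}$ into the lower regime, whence strict monotonicity follows. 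I would split according to $k$, since the case $k=1$ ($m=2$) is covered by Lemma \ref{L8} while $k\ge2$ is covered by Lemma \ref{8}.

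First I would dispose of $m=2$. By Lemma \ref{L8}(1), $\textnormal{c}+\textnormal{d}\ge0210^\infty$ would give $f_{\textnormal{c},\textnormal{d}}(q)>0$; hence $f_{\textnormal{c},\textnormal{d}}(q)\le0$ forces $\textnormal{c}+\textnormal{d}<0210^\infty$, and Lemma \ref{L8}(2) then yields that $f_{\textnormal{c},\textnormal{d}}(t)$ is strictly increasing on $[q,3]$. For $m=2k$ with $k\ge2$, the first reduction is Lemma \ref{8}(1): since $\textnormal{c}+\textnormal{d}\ge k0^\infty$ would give $f_{\textnormal{c},\textnormal{d}}(q)>0$, the hypothesis $f_{\textnormal{c},\textnormal{d}}(q)\le0$ forces $\textnormal{c}+\textnormal{d}<k0^\infty$, i.e.\ $(\textnormal{c}+\textnormal{d})|_1\le k-1$ (lexicographically $\textnormal{c}+\textnormal{d}<k0^\infty$ can only fail through the first digit, since $0^\infty$ is minimal).

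It then remains to treat $(\textnormal{c}+\textnormal{d})|_1=k-1$ and $(\textnormal{c}+\textnormal{d})|_1\le k-2$. If $(\textnormal{c}+\textnormal{d})|_1=k-1$, then $\textnormal{c}+\textnormal{d}\ge(k-1)(m-1)(k+1)0^\infty$ would give $f_{\textnormal{c},\textnormal{d}}(q)>0$ by Lemma \ref{8}(2)(i), contradicting the hypothesis; so $\textnormal{c}+\textnormal{d}<(k-1)(m-1)(k+1)0^\infty$ and Lemma \ref{8}(3)(i) gives strict monotonicity on $[q,m+1]$. If instead $(\textnormal{c}+\textnormal{d})|_1\le k-2$, I would argue that $\textnormal{c}+\textnormal{d}<(k-2)(2m-1)(k+1)0^\infty$: were this to fail, then $\textnormal{c}+\textnormal{d}\ge(k-2)(2m-1)(k+1)0^\infty$ would force the first digit to be exactly $k-2$, and Lemma \ref{8}(2)(ii) would give $f_{\textnormal{c},\textnormal{d}}(q)>0$, a contradiction; hence Lemma \ref{8}(3)(ii) applies and $f_{\textnormal{c},\textnormal{d}}(t)$ is strictly increasing on $[q,m+1]$. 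This exhausts all cases.

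There is essentially no analytic obstacle here: all the work (the applications of Proposition \ref{simlemma} to the relevant cubics and quartics) is already encapsulated in Lemmas \ref{L8} and \ref{8}. The only point requiring care is verifying that the case split is genuinely exhaustive and that no value of $\textnormal{c}+\textnormal{d}$ slips between the ``$f(q)>0$'' and ``increasing'' regimes; this is guaranteed precisely because in each lemma the two regimes are separated by one and the same threshold word, so their union is the whole admissible range. I would also double-check the boundary reading of the lexicographic inequalities (e.g.\ that $\textnormal{c}+\textnormal{d}<k0^\infty$ is equivalent to $(\textnormal{c}+\textnormal{d})|_1\le k-1$, and that a first digit $\ge k-2$ together with $\le k-2$ pins it to $k-2$), which is the only place where a careless error could creep in.
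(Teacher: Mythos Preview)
Your proposal is correct and follows exactly the approach the paper intends: the paper's own proof is the single line ``Combining Lemma~\ref{L8} with~\ref{8} we have'', and what you have written is precisely the case analysis that unpacks this combination. Your handling of the boundary readings (in particular that $(\textnormal{c}+\textnormal{d})|_1\le k-2$ together with $\textnormal{c}+\textnormal{d}\ge(k-2)(2m-1)(k+1)0^\infty$ forces the first digit to equal $k-2$) is sound, and the case split is exhaustive.
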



For $q \in [q_f(m),m+1]$ let
\begin{align*}
B'_q=\set{(\textnormal{c},\textnormal{d}): \textnormal{c},\textnormal{d}\in A'_q, f_{\textnormal{c},\textnormal{d}}(q)\le 0}.
\end{align*}
Then $B'_q\neq\emptyset $ follows from the following calculation
\begin{align*}
f_{0^\infty , 0^\infty }(q)&=((m+1)0^\infty)_q-(m^\infty)_q\\
&=\frac{1}{q}-\frac{m}{q(q-1)}\le0.
\end{align*}
On the other hand, $f_{\textnormal{c},\textnormal{d}}(m+1)\ge0$ always holds for any $(\textnormal{c},\textnormal{d})\in B'_q$.
From
Corollaries \ref{c1} and \ref{c2} it follows that for any $(\textnormal{c},\textnormal{d})\in B'_q$
there exists a unique $q_{\textnormal{c},\textnormal{d}}\in [q, m+1]$ such that
\begin{equation*}
f_{\textnormal{c},\textnormal{d}}(q_{\textnormal{c},\textnormal{d}})=(1\textnormal{c})_{q_{\textnormal{c},\textnormal{d}}}+(m\textnormal{d})_{q_{\textnormal{c},\textnormal{d}}}-
(m^{\infty})_{q_{\textnormal{c},\textnormal{d}}}=0,
\end{equation*}
which means  $q_{\textnormal{c},\textnormal{d}}\in\bb_2(m)$.

\begin{lemma}\label{10}
Let $q\in[q_f(m),m+1]$.
\begin{enumerate}[\upshape (1)]
\item\label{101}  If $(\textnormal{c},\textnormal{d}), (\textnormal{e},\textnormal{d})\in B'_q$ with $\textnormal{e}>\textnormal{c}$, then $q_{\textnormal{e},\textnormal{d}}<q_{\textnormal{c},\textnormal{d}}$.
\item If $(\textnormal{c},\textnormal{d}), (\textnormal{c},\textnormal{e})\in B'_q$ with $\textnormal{e}>\textnormal{d}$, then $q_{\textnormal{c},\textnormal{e}}<q_{\textnormal{c},\textnormal{d}}$.

\end{enumerate}
\end{lemma}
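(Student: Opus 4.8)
The plan is to deduce both inequalities directly from the strict monotonicity of the functions $f_{\textnormal{c},\textnormal{d}}$ furnished by Corollaries \ref{c1} and \ref{c2}, combined with the pointwise comparison in Lemma \ref{6}(3). I will prove (1) in detail and then obtain (2) either from the second half of Lemma \ref{6}(3) or, more cheaply, from the symmetry $f_{\textnormal{c},\textnormal{d}}=f_{\textnormal{d},\textnormal{c}}$ of Lemma \ref{6}(1), which reduces (2) to (1) after interchanging the two arguments.

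First I would record the setup that makes the roots well defined. Since $(\textnormal{c},\textnormal{d}),(\textnormal{e},\textnormal{d})\in B'_q$ we have $f_{\textnormal{c},\textnormal{d}}(q)\le0$ and $f_{\textnormal{e},\textnormal{d}}(q)\le0$, so by Corollary \ref{c1} (for $m$ odd) or Corollary \ref{c2} (for $m$ even) each of $f_{\textnormal{c},\textnormal{d}}$ and $f_{\textnormal{e},\textnormal{d}}$ is strictly increasing on $[q,m+1]$. Together with $f_{\textnormal{c},\textnormal{d}}(m+1)\ge0$ and $f_{\textnormal{e},\textnormal{d}}(m+1)\ge0$ from Lemma \ref{6}(4), this is exactly the reasoning in the paragraph preceding the lemma that yields unique zeros $q_{\textnormal{c},\textnormal{d}},q_{\textnormal{e},\textnormal{d}}\in[q,m+1]$.

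The key step is to evaluate the comparison precisely at the point $q_{\textnormal{c},\textnormal{d}}$. Since $\textnormal{e}>\textnormal{c}$ with $\textnormal{c},\textnormal{e}\in A'_q$, Lemma \ref{6}(3) gives $f_{\textnormal{e},\textnormal{d}}(p)>f_{\textnormal{c},\textnormal{d}}(p)$ for every $p\ge q$; taking $p=q_{\textnormal{c},\textnormal{d}}\in[q,m+1]$ therefore yields
\begin{equation*}
f_{\textnormal{e},\textnormal{d}}(q_{\textnormal{c},\textnormal{d}})>f_{\textnormal{c},\textnormal{d}}(q_{\textnormal{c},\textnormal{d}})=0=f_{\textnormal{e},\textnormal{d}}(q_{\textnormal{e},\textnormal{d}}).
\end{equation*}
Because $f_{\textnormal{e},\textnormal{d}}$ is strictly increasing on $[q,m+1]$, the inequality $f_{\textnormal{e},\textnormal{d}}(q_{\textnormal{c},\textnormal{d}})>f_{\textnormal{e},\textnormal{d}}(q_{\textnormal{e},\textnormal{d}})$ forces $q_{\textnormal{c},\textnormal{d}}>q_{\textnormal{e},\textnormal{d}}$, which is (1). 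For (2) one repeats the argument with the second assertion of Lemma \ref{6}(3), or simply applies the symmetry $f_{\textnormal{c},\textnormal{e}}=f_{\textnormal{e},\textnormal{c}}$ to reduce to the case already treated.

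I do not expect any genuine obstacle here: the whole argument is a single monotonicity comparison. The only point requiring a moment of care is ensuring that $q_{\textnormal{c},\textnormal{d}}$ actually lies in $[q,m+1]$ so that the pointwise estimate of Lemma \ref{6}(3) is legitimately applicable at that value of $p$ — but this is guaranteed by the Corollaries together with Lemma \ref{6}(4), exactly as noted just before the statement.
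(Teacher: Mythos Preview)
Your proof is correct and follows essentially the same approach as the paper: compare the two functions at a root using Lemma~\ref{6}(3), then invoke the strict monotonicity from Corollaries~\ref{c1}/\ref{c2} to order the roots. The only cosmetic difference is that the paper evaluates $f_{\textnormal{c},\textnormal{d}}$ at $q_{\textnormal{e},\textnormal{d}}$ (obtaining a negative value) whereas you evaluate $f_{\textnormal{e},\textnormal{d}}$ at $q_{\textnormal{c},\textnormal{d}}$ (obtaining a positive value); both lead immediately to the conclusion.
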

\begin{proof}
By the symmetry  it suffices to prove \eqref{101}. By Lemma \ref{6}
\begin{align*}
f_{\textnormal{c},\textnormal{d}}(q_{\textnormal{e},\textnormal{d}})&=(1\textnormal{c})_{q_{\textnormal{e},\textnormal{d}}}+(m\textnormal{d})_{q_{\textnormal{e},\textnormal{d}}}-(m^{\infty})_{q_{\textnormal{e},\textnormal{d}}}\\
&<(1\textnormal{e})_{q_{\textnormal{e},\textnormal{d}}}+(m\textnormal{d})_{q_{\textnormal{e},\textnormal{d}}}-(m^{\infty})_{q_{\textnormal{e},\textnormal{d}}}=f_{\textnormal{e},\textnormal{d}}(q_{\textnormal{e},\textnormal{d}})=0.
\end{align*}
By Corollaries \ref{c1} and \ref{c2} we have $q_{\textnormal{c},\textnormal{d}}>q_{\textnormal{e},\textnormal{d}}$.
\end{proof}

\section{Proof of Theorems \ref{T3} and \ref{T1}}
We firstly give a topological description of $\bb_2(m)$.
\begin{lemma}\label{21}
$\bb_2(m)$ is compact.
\end{lemma}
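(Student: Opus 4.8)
The plan is to show that $\bb_2(m)$ is bounded and closed. Boundedness is immediate, since $\bb_2(m)\subseteq(1,m+1]$. For closedness I would take a sequence $(q_n)\subseteq\bb_2(m)$ with $q_n\to q$ and prove $q\in\bb_2(m)$. First I would dispose of the region near $1$: because $\bb_2(m)\cap(1,q_f(m))$ is a finite discrete set (cited from \cite{KLZ}), it is closed and bounded away from $1$, so no sequence in $\bb_2(m)$ can converge to $1$, and any limit $q<q_f(m)$ already lies in this finite set and hence in $\bb_2(m)$. Thus it suffices to treat $q\in[q_f(m),m+1]$, and after discarding finitely many terms I may assume $q_n\in[q_f(m),m+1]$ for every $n$.

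For such $q_n$, Lemma \ref{5} provides $\textnormal{c}_n,\textnormal{d}_n\in A'_{q_n}$ with $f_{\textnormal{c}_n,\textnormal{d}_n}(q_n)=0$. Since $\Omega_m^{\mathbb N}$ is compact in the metric $D$, after passing to a subsequence I may assume $\textnormal{c}_n\to\textnormal{c}$ and $\textnormal{d}_n\to\textnormal{d}$ for some $\textnormal{c},\textnormal{d}\in\Omega_m^{\mathbb N}$. The tail estimate in the proof of Lemma \ref{6}(2) shows that when $\textnormal{c}_n,\textnormal{c}$ and $\textnormal{d}_n,\textnormal{d}$ agree on their first $N$ digits one has $|f_{\textnormal{c}_n,\textnormal{d}_n}(t)-f_{\textnormal{c},\textnormal{d}}(t)|\le 2m/(q_f(m)^{N}(q_f(m)-1))$ uniformly for $t\in[q_f(m),m+1]$; combining this uniform convergence with the continuity of $f_{\textnormal{c},\textnormal{d}}$ in $t$ and with $q_n\to q$ gives $f_{\textnormal{c},\textnormal{d}}(q)=\lim_n f_{\textnormal{c}_n,\textnormal{d}_n}(q_n)=0$, that is, $(\overline{\textnormal{d}})_q-(\textnormal{c})_q=1$.

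To conclude $q\in\bb_2(m)$ through Theorem \ref{T4}, it remains to check that the limit sequences $\textnormal{c},\textnormal{d}$ still lie in $\uu'_q$, so that $\textnormal{c},\overline{\textnormal{d}}\in\uu_q$ and $1\in\uu_q-\uu_q$. This is the crux and the step I expect to be the main obstacle. Passing to the limit in the admissibility inequalities of Lemma \ref{L22} preserves them only non-strictly, because $q\mapsto\alpha(q)$ is merely left-continuous (Lemma \ref{16}); a priori the limits therefore satisfy only the weak two-sided ($\vv_q$-type) inequalities, i.e. $\textnormal{c},\textnormal{d}\in\overline{\uu_q}$ rather than $\uu'_q$. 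I would resolve this by the dichotomy of Lemma \ref{19}(i): if $q\notin\overline{\uu}$ then $\uu_q$ is closed, hence $\uu'_q$ is closed in $\Omega_m^{\mathbb N}$, the limits $\textnormal{c},\textnormal{d}$ automatically belong to $\uu'_q$, and the proof closes.

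The delicate remaining case is $q\in\overline{\uu}$, where $\uu_q$ fails to be closed and the witnesses may degenerate to boundary sequences of $\overline{\uu_q}\setminus\uu_q$. Here I would exploit the structure in Lemma \ref{18}: for $q\in\overline{\uu}\setminus\uu$ the expansion $\alpha(q)$ is periodic and there is a sequence $q_j\nearrow q$ with $q_j\in\uu$, which lets one approximate $\textnormal{c},\textnormal{d}$ by genuine unique expansions at a nearby base and transfer the identity $(\overline{\textnormal{d}})_q-(\textnormal{c})_q=1$ to an honest element of $\uu_q-\uu_q$; this is exactly the nontrivial implication $1\in\overline{\uu_q}-\overline{\uu_q}\Rightarrow 1\in\uu_q-\uu_q$ underlying Theorem \ref{T3}. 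Assembling the two cases yields $q\in\bb_2(m)$, so $\bb_2(m)$ is closed and therefore compact.
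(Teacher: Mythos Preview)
Your sequential-limit approach is genuinely different from the paper's. The paper instead shows that the complement of $\bb_2(m)\cap[q_f(m),m+1]$ is open: given $q\notin\bb_2(m)$ in this range, it observes (forward-referencing Lemma~\ref{12}) that $q\notin\overline{\uu}$, whence $\uu_q-\uu_q$ is compact with $1$ at positive Hausdorff distance, and then invokes the continuous dependence of $\uu_p$ on $p\notin\overline{\uu}$ from~\cite{CK} to propagate this to a neighbourhood of $q$.

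Your argument, however, has a gap at the key limiting step. You assert that the limits $\textnormal{c},\textnormal{d}$ land in $\overline{\uu_q}$ by passing to the limit in the admissibility inequalities, but when $q_n\searrow q$ one obtains only $c_{k+1}c_{k+2}\cdots\le\lim_{p\searrow q}\alpha(p)$, and this right limit can strictly exceed $\alpha(q)$ (precisely when the greedy expansion $\beta(q)$ is finite). So the limit sequence need not satisfy the $\overline{\uu_q}$-type bound $c_{k+1}c_{k+2}\cdots\le\alpha(q)$, and the closedness of $\uu_q$ does not bite. Your diagnosis is also inverted: left-continuity of $\alpha$ is what makes the case $q_n\nearrow q$ work; the obstacle is the possible failure of \emph{right}-continuity. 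A clean repair is to note that for $q\in\vv$ (in particular $q\in\overline{\uu}$) Lemmas~\ref{12} and~\ref{13} already give $q\in\bb_2(m)$, so your elaborate Case~2 is unnecessary; while for $q\notin\vv$, $q$ lies in the interior of an interval $(q_\ell,q_{\ell+1}]$ on which $\uu'_p$ is constant (this is the content of Corollaries~\ref{63}--\ref{64}, or of~\cite{CK}), so that $\textnormal{c}_n\in\uu'_q$ for large $n$ and closedness of $\uu'_q$ applies directly. After this repair, though, your proof and the paper's ultimately rest on the same external input.
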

\begin{proof}
 \cite{KLZ} shows that $\bb_{2}(m)\cap(1,q_{f}(m))$ contains only finitely many points. We only need to consider $\bb_{2}(m)\cap[q_{f}(m),m+1]$ and to prove its complement is open.

We take $q\in[q_{f}(m),m+1]\setminus\bb_{2}(m)$ arbitrarily. So $1\notin\uu_q-\uu_q$ by Theorem 3.3.  The following lemma shows $q\notin\overline{\uu}$. Thus  $\uu_q-\uu_q$ is compact by   Lemma \ref{19} (i).
Then $d_H(1,\uu_q-\uu_q)>0$, where $d_H$ denotes the Hausdorff metric. Since $\uu_p$  continuously depends on $p\notin\overline{\uu}$ (see \cite{CK}), we take $0<\delta<d_H(1,\uu_q-\uu_q)$ and a small open set $O(\delta)$ which contains $q$  such that $d_H(\uu_{p_0}-\uu_{p_0},\uu_q-\uu_q)<\delta$ for all $p_0\in O(\delta )$. Then $d_H(1,\uu_{p_0}-\uu_{p_0})>0$, i.e. $p_0\notin\bb_2(m)$.
\end{proof}

  The following results reveal that there is a close connection between $\overline{\uu},\vv$ and $\bb_2(m)$.

\begin{lemma}\label{12}
$\overline{\uu} \subset \bb_2(m)$. Furthermore, $\overline{\uu}\subset\bb_2^{(\infty)}(m)$.
\end{lemma}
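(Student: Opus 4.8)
The plan is to establish the two assertions in turn, reducing the first to the single inclusion $\uu\subseteq\bb_2(m)$ together with the compactness already recorded in Lemma~\ref{21}, and then deducing the second from the fact, recalled in the preliminaries, that $\overline{\uu}$ is a Cantor set.

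First I would prove the elementary inclusion $\uu\subseteq\bb_2(m)$. If $q\in\uu$ then by definition $1\in\uu_q$, while $0\in\uu_q$ holds trivially since $0^\infty$ is the only expansion of $0$; moreover $1\in I_q=[0,m/(q-1)]$ because $q\le m+1$. Hence $1=1-0\in\uu_q-\uu_q$, and Theorem~\ref{T4} yields $q\in\bb_2(m)$. Since Lemma~\ref{21} tells us that $\bb_2(m)$ is compact, in particular closed, passing to closures gives $\overline{\uu}\subseteq\overline{\bb_2(m)}=\bb_2(m)$, which is the first claim.

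For the stronger statement $\overline{\uu}\subseteq\bb_2^{(\infty)}(m)$ I would argue by induction on $i$ that $\overline{\uu}\subseteq\bb_2^{(i)}(m)$ for every $i\ge0$, so that $\overline{\uu}\subseteq\bigcap_{i\ge0}\bb_2^{(i)}(m)=\bb_2^{(\infty)}(m)$. The base case $i=0$ is exactly the inclusion just obtained. For the inductive step, recall that $\overline{\uu}$ is a Cantor set, hence perfect: every $q\in\overline{\uu}$ is a limit of points of $\overline{\uu}\setminus\{q\}$. Assuming $\overline{\uu}\subseteq\bb_2^{(i)}(m)$, each such $q$ is then a limit of points of $\bb_2^{(i)}(m)\setminus\{q\}$, i.e. an accumulation point of $\bb_2^{(i)}(m)$, whence $q\in\bb_2^{(i+1)}(m)$. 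This closes the induction.

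The computations here are negligible; the content is structural. The point worth emphasising is that for $q\in\overline{\uu}\setminus\uu$ one cannot produce the two required unique expansions directly, since there $1\notin\uu_q$ and the naive choice $1-0$ is unavailable; this is precisely why the first assertion must be routed through the compactness of $\bb_2(m)$ rather than an explicit construction. Likewise, the entire force of the second assertion comes from the perfectness of $\overline{\uu}$, after which the iteration is automatic. The only things to check with a little care are that $\bb_2^{(\infty)}(m)$ is indeed $\bigcap_{i}\bb_2^{(i)}(m)$, and that the one-sided limits supplied by perfectness (for instance at the top endpoint $m+1$) still count as accumulation points, both of which hold.
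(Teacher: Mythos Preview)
Your treatment of the second assertion is fine and coincides with the paper's: once $\overline{\uu}\subseteq\bb_2(m)$ is known, perfectness of $\overline{\uu}$ gives $\overline{\uu}\subseteq\bb_2^{(i)}(m)$ for every $i$ by the induction you describe.

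The problem is the first assertion. You deduce $\overline{\uu}\subseteq\bb_2(m)$ from $\uu\subseteq\bb_2(m)$ together with Lemma~\ref{21} (compactness of $\bb_2(m)$). But look at the paper's proof of Lemma~\ref{21}: given $q\in[q_f(m),m+1]\setminus\bb_2(m)$, one needs $\uu_q-\uu_q$ to be compact, and by Lemma~\ref{19}(i) this requires $q\notin\overline{\uu}$. The paper obtains this by invoking ``the following lemma'', namely Lemma~\ref{12} itself. In other words, in the paper's logical order Lemma~\ref{21} depends on Lemma~\ref{12}, not the other way round, and your route is circular.

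This is exactly why the paper does \emph{not} pass through compactness here but instead handles $q\in\overline{\uu}\setminus\uu$ by an explicit construction: using that $\alpha(q)=(a_1\cdots a_n)^\infty$ is periodic (Lemma~\ref{18}(ii)) and that there is $p\in\uu$ just below $q$ with $\alpha(p)|_n=a_1\cdots a_n$ (Lemma~\ref{18}(i)), it writes down concrete sequences $\textnormal{c}=\overline{a_1\cdots a_n^+}\alpha(p)$ and $\textnormal{d}=0^n\overline{\alpha(p)}$, checks $\textnormal{c},\textnormal{d}\in A'_q$, and verifies $f_{\textnormal{c},\textnormal{d}}(q)=0$. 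Your remark that for such $q$ ``one cannot produce the two required unique expansions directly'' is therefore too pessimistic: one can, and in this development one must.
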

\begin{proof}
Take a $q\in \mathcal U$ arbitrarily, $1$ has a unique $q$-expansion, say $(c_i)$. Then $1=(c_i)_q-(0^\infty )_q$ which implies
 $q\in \bb_2(m)$ by Theorem \ref{T4}.
Since $\overline{\uu}$ is a Cantor set (\cite[Theorem 1.2]{VKL}),  we have $\overline{\uu}\subset\bb_2^{(\infty)}(m)$
when $\overline{\uu} \subset \bb_2(m)$ . Next we show that $\overline{\uu}\setminus\uu\subset\bb_2(m)$.

Let $q\in\overline{\uu}\setminus\uu$. By Lemma \ref{18} (ii) there exists a word $a_1a_2 \cdots a_n$ such that $\alpha(q)=(a_1a_2\cdots a_n)^{\infty}$ where $n$ is the smallest period of $\alpha(q)$. If $n=1$, then $\alpha(q)=(\alpha_1(q))^{\infty}$, which implies that $q=\alpha_1(q)+1$. Otherwise $q$ is a noninteger. Therefore we distinguish two cases.

\emph{Case I}: $q$ is a noninteger. In this case $n\geq2$ and $\beta(q)=a_1\cdots a_n^+0^\infty$. From Lemmas \ref{161} and \ref{17} (ii) we know that
\begin{equation}\label{e1}
\overline{a_1\cdots a_{n-i}}\le a_{i+1} \cdots a_n<a_{i+1} \cdots a_n^+\le a_1\cdots a_{n-i}
\end{equation}
for all $0<i<n$. Since $q\in\overline{\uu}\setminus\uu$, Lemma \ref{18} (i) tells us that  there exists a $p\in\uu\cap(1,q)$ such that
\begin{equation*}\label{e2}
\alpha_1(p)\cdots\alpha_n(p)=a_1a_2\cdots a_n.
\end{equation*}
Let
\begin{equation*}
{\textnormal{c}}=\overline{a_1\cdots a_{n}^+}\alpha(p), \qtq{and}{\textnormal{d}}=0^n\overline{\alpha(p)}.
\end{equation*}
It remains to prove that ${\textnormal{c}}, {\textnormal{d}} \in A'_q$ and $q=q_{{\textnormal{c}},{\textnormal{d}}}$.

First we prove that ${\textnormal{c}}, {\textnormal{d}}\in A'_q$. Since $p\in\uu\cap(1,q)$, by Lemmas \ref{16} and \ref{17} (i) we have
\begin{equation*}\label{20}
\overline{\alpha(q)}<\overline{\alpha(p)}<\sigma^i(\alpha(p))<\alpha(p)<\alpha(q)
\end{equation*}
for all $i>0$. Moreover, $a_1\ge k+1$ implies that $a_1>\overline{a_1}$. Then ${\textnormal{d}}\in A'_q$. On the other hand, for $0<i<n$ by \eqref{e1} we have $\overline{a_{i+1}\cdots a_n^+}\ge\overline{a_1\cdots a_{n-i}}$ and $a_1\cdots a_{i}\ge\overline{a_{n-i+1}\cdots a_n}$, which imply that
\begin{align*}
\overline{a_{i+1}\cdots a_n^+}\alpha(p)&=\overline{a_{i+1}\cdots a_n^+}a_{1}\cdots a_i \alpha_{i+1}(p)\cdots\\
&\ge\overline{a_{1}\cdots a_{n-i}}\overline{a_{n-i+1}\cdots a_n}\overline{\alpha(p)}\\
&>\overline{a_{1}\cdots a_n}\overline{\alpha(q)}=\overline{\alpha(q)}.
\end{align*}
Together with $\overline{a_{i+1}\cdots a_{n}^+}<a_1\cdots a_{n-i}$ we have $\textnormal{c}\in A'_q$. We conclude that $q=q_{{\textnormal{c}},{\textnormal{d}}}\in\bb_2(m)$ by the following calculation
\begin{align*}
f_{{\textnormal{c}}, {\textnormal{d}}}(q)&=(1{\textnormal{c}})_q+(m{\textnormal{d}})_q-(m^\infty)_q\\
&=(1\overline{a_1\cdots a_{n}^+}\alpha(p))_q+(m0^n\overline{\alpha(p)})_q-(m^\infty)_q\\
&=((m+1)\overline{a_1\cdots a_{n}^+}0^\infty)_q-(m^{n+1}0^\infty)_q\\
&=(10^\infty)_q-(0a_1\cdots a_{n}^+0^\infty)_q=0.
\end{align*}
\emph{Cases II}: $q$ is an integer. Let
\begin{equation*}
{\textnormal{c}}=\overline{a_1^+}\alpha(p)\qtq{and}{\textnormal{d}}=0\overline{\alpha(p)}.
\end{equation*}
As was the case in previous analysis,  it can be proved similarly.
\end{proof}

\begin{lemma}\label{13}
$\mathcal V\setminus\set{\mathcal G(m)}\subset\mathcal B_2(m)$.
\end{lemma}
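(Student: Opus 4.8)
The plan is to combine Lemma \ref{12} with the structural description of $\vv$ provided by Lemma \ref{18}(iii). Since $\overline{\uu}\subset\vv$ and $\g(m)\notin\overline{\uu}$ (because $\g(m)<q_{KL}=\min\overline{\uu}$), we have $\vv\setminus\set{\g(m)}=\overline{\uu}\cup\bigl(\vv\setminus(\overline{\uu}\cup\set{\g(m)})\bigr)$, and the first piece is already contained in $\bb_2(m)$ by Lemma \ref{12}. So I only need to treat $q\in\vv\setminus(\overline{\uu}\cup\set{\g(m)})$. For such $q$, Lemma \ref{18}(iii) gives a word $a_1\cdots a_n$, with $(a_1\cdots a_n)^\infty$ satisfying \eqref{15}, such that, writing $w=a_1\cdots a_{n-1}a_n^+$,
\[
\alpha(q)=(w\,\overline{w})^\infty .
\]
Because $q\neq\g(m)$, the two half-periods are distinct (the degenerate case $\alpha(q)=(m/2)^\infty$ is exactly what $q\neq\g(m)$ rules out) and the period's last digit $m-a_n-1$ is $<m$; hence the greedy expansion of $1$ is the \emph{finite} word $\beta(q)=w\,\overline{a_1\cdots a_n}\,0^\infty$, which yields the single arithmetic identity I will use, namely $(w\,\overline{a_1\cdots a_n}\,0^\infty)_q=1$.

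Imitating the construction in Lemma \ref{12}, I would pick $p\in\uu$ with $p<q$ whose quasi-greedy expansion shares a long common prefix with $(a_1\cdots a_n)^\infty$ (so that $\alpha(p)|_n=a_1\cdots a_n$ and $\alpha(p)<\alpha(q)$), its existence coming from Lemma \ref{18}(i) and the left-continuity of $q\mapsto\alpha(q)$ (Lemma \ref{16}) applied at the base whose quasi-greedy expansion is $(a_1\cdots a_n)^\infty$. Then set
\[
\textnormal{c}=\overline{w}\,(a_1\cdots a_n)\,\alpha(p),\qquad \textnormal{d}=0^{2n}\,\overline{\alpha(p)} .
\]
Adding digitwise, the two tails $\alpha(p)$ and $\overline{\alpha(p)}$ sum to $m^\infty$, so $\textnormal{c}+\textnormal{d}=\overline{w}\,(a_1\cdots a_n)\,m^\infty$, and \eqref{e34} gives
\[
f_{\textnormal{c},\textnormal{d}}(q)=\bigl((m+1)\overline{w}(a_1\cdots a_n)m^\infty\bigr)_q-(m^\infty)_q=\tfrac1q\bigl(1-(w\,\overline{a_1\cdots a_n}\,0^\infty)_q\bigr)=0
\]
by the identity above. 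Thus, \emph{once} $\textnormal{c},\textnormal{d}\in A'_q$ is known, Lemma \ref{5} delivers $q\in\bb_2(m)$.

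The substance of the proof is therefore the verification $\textnormal{c},\textnormal{d}\in A'_q$, i.e. that both are genuine unique expansions with first digit $<\alpha_1(q)=a_1$. For $\textnormal{d}$ this is straightforward: $\overline{\alpha(p)}\in\uu'_p\subset\uu'_q$ (unique expansions persist as the base grows, since $p<q$), and prepending $0^{2n}$ produces only shifts $0^{j}\overline{\alpha(p)}$, which are $<\alpha(q)$ because $\overline{\alpha(p)}$ begins with $m-a_1<a_1$, while the lower constraint of Lemma \ref{L22} is vacuous at the zero digits. For $\textnormal{c}$ I would check Lemma \ref{L22} shift by shift: its first digit is $m-a_1<a_1$ (using $2a_1>m$ for $q>\g(m)$); shifts into the tail are controlled by $p\in\uu$ with $p<q$; the shift at position $n$ equals $a_1\cdots a_n\,\alpha(p)$ and lies strictly between $\overline{\alpha(q)}=(\overline{w}w)^\infty$ and $\alpha(q)=(w\overline{w})^\infty$ already at the first digit. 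The delicate shifts are those landing inside the prefix $\overline{w}\,(a_1\cdots a_n)$, where one must keep $\overline{w_{j+1}\cdots w_n}\,a_1\cdots a_n\,\alpha(p)$ trapped between $\overline{\alpha(q)}$ and $\alpha(q)$; this is precisely where the admissibility inequalities \eqref{e1} satisfied by $a_1\cdots a_n$ (consequences of \eqref{15} via Lemmas \ref{16} and \ref{17}) are used, exactly as in Lemma \ref{12} but now with the longer period $2n$. The main obstacle is this bookkeeping together with arranging that the common prefix of $\alpha(p)$ and $(a_1\cdots a_n)^\infty$ is long enough that the transition from $a_1\cdots a_n$ into $\alpha(p)$ never breaks strictness in these comparisons.
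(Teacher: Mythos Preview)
Your reduction to $q\in\vv\setminus(\overline\uu\cup\{\g(m)\})$ and your arithmetic showing $f_{\textnormal{c},\textnormal{d}}(q)=0$ are both correct and match the paper's strategy. The gap is in the choice of tails: you require an auxiliary base $p\in\uu$ with $p<q$ and $\alpha(p)|_n=a_1\cdots a_n$, but such a $p$ need not exist. Lemma~\ref{18}(i) applies only when the base $r$ with $\alpha(r)=(a_1\cdots a_n)^\infty$ lies in $\overline\uu\setminus\uu$; here $r\in\vv$ is all one knows. Concretely, if $q=q_\ell$ inside a component $(q_0,q_0^*)$, the word of Lemma~\ref{18}(iii) is $c_{\ell-1}^-$, and one checks that $r=q_{\ell-1}$ whenever $\ell\ge2$, so $r\in(q_0,q_0^*)\subset(1,m+1]\setminus\overline\uu$. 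Worse, for any $q\in\vv\setminus(\overline\uu\cup\{\g(m)\})$ with $q<q_{KL}$ --- for instance $q=q_f(m)$ --- there is \emph{no} $p\in\uu$ below $q$ at all, since $q_{KL}=\min\uu$. Even in higher components, any $p\in\uu$ with $p<q_\ell$ must satisfy $p\le q_0$, hence $\alpha(p)\le(c_0^-)^\infty$, whereas the required prefix $a_1\cdots a_n=c_{\ell-1}^-$ begins with $c_0>c_0^-$ once $\ell\ge2$; so the prefix condition cannot be met. Thus your construction works only for $q=q_1$ in non-initial components, which is essentially the situation already handled by Lemma~\ref{12}.

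The paper avoids this entirely by dispensing with $p$: it uses the \emph{periodic} tail $(a_1\cdots a_n)^\infty$ itself, setting
\[
\textnormal{c}=\overline{a_1\cdots a_n^+}\,(a_1\cdots a_n)^\infty,\qquad \textnormal{d}=0^{2n}\,(\overline{a_1\cdots a_n})^\infty .
\]
These tails lie in $\uu'_q$ (even though not in $\uu'_r$) directly from \eqref{15} and the strict inequality $\alpha(q)>(a_1\cdots a_n)^\infty$: every shift of $(a_1\cdots a_n)^\infty$ is $\le(a_1\cdots a_n)^\infty<\alpha(q)$ and $\ge\overline{(a_1\cdots a_n)^\infty}>\overline{\alpha(q)}$. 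Membership $\textnormal{c},\textnormal{d}\in A'_q$ then follows from the block inequalities $\overline{a_{i+1}\cdots a_n^+}<\overline{a_{i+1}\cdots a_n}\le a_1\cdots a_{n-i}$ and $a_{i+1}\cdots a_n<a_{i+1}\cdots a_n^+\le a_1\cdots a_{n-i}$ for $0<i<n$ (with $n=1$ handled separately), and your identity $f_{\textnormal{c},\textnormal{d}}(q)=0$ goes through verbatim with $\alpha(p)$ replaced by $(a_1\cdots a_n)^\infty$.
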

\begin{proof}
Thanks to Lemma \ref{12} it suffices to prove that $\mathcal V\setminus(\overline{\uu}\cup\set{\mathcal G(m)})\subset\mathcal B_2(m)$. Given arbitrarily
 $q\in \mathcal V \setminus(\overline{\uu} \cup\set{\mathcal G(m)})$, Lemma \ref{18} (iii) tells us that
 there exists a word $a_1\cdots a_n$ with $n\ge1$ such that
\begin{equation*}
\alpha(q)=(a_1\cdots a_n^+\overline{a_1\cdots a_n^+})^\infty
\end{equation*}
and for all $0<i<n$
\begin{equation*}
\overline{(a_1\cdots a_n)^\infty}\le \sigma^i((a_1\cdots a_n)^\infty)<(a_1\cdots a_n)^\infty.
\end{equation*}
Take
$
{\textnormal{c}}=(c_i)=\overline{a_1\cdots a_n^+}(a_1\cdots a_n)^\infty $ and ${\textnormal{d}}=(d_i)=0^{2n}(\overline{a_1\cdots a_n})^\infty.$
It remains to show ${\textnormal{c}}, {\textnormal{d}}\in A'_q$ and $q=q_{{\textnormal{c}},{\textnormal{d}}}\in\bb_2(m)$.

(i) Suppose that $n\ge2$. Then by the above inequality and the definition of $\vv$ we have for all $0<i<n$
\begin{align}
&\overline{a_{i+1}\cdots a_n^+}<\overline{a_{i+1}\cdots a_n}\le a_{1}\cdots a_{n-i}\label{e42}\\
&a_{i+1}\cdots a_n<a_{i+1}\cdots a_n^+\le a_{1}\cdots a_{n-i}\label{e43}.
\end{align}
By Lemma \ref{L22} it suffices to prove for all $0<i<2n$
\begin{align*}
&c_{i+1} c_{i+2}\cdots<\alpha(q)\quad\text{when $c_i<m$}\\
&\overline{c_{i+1} c_{i+2}\cdots}<\alpha(q)\quad\text{when $c_i>0$}.
\end{align*}
If $0<i<n$, we use \eqref{e42} and \eqref{e43}. If $n<i<2n$, we use \eqref{e42} and \eqref{e43} once again.  For the case $i=n$, we have $a_1\ge k+1$ ($m=2k$ or $m=2k+1$) it follows from $\overline{a_1}<a_1$. Hence, ${\textnormal{c}}\in A^{\prime}_q$. Similarly one can show  ${\textnormal{d}} \in A^{\prime}_q$.

Next we show $q=q_{{\textnormal{c}},{\textnormal{d}}}$. Since $\beta(q)=a_1\cdots a_n^+\overline{a_1\cdots a_n}0^\infty$ we have
\begin{align*}
f_{{\textnormal{c}}, {\textnormal{d}}}(q)&=(1{\textnormal{c}})_q+(m{\textnormal{d}})_q-(m^\infty)_q\\
&=(1\overline{a_1\cdots a_n^+}(a_1\cdots a_n)^\infty)_q+(m0^{2n}(\overline{a_1\cdots a_n})^\infty)_q-(m^\infty)_q\\
&=((m+1)\overline{a_1\cdots a_n^+}a_1\cdots a_n0^\infty)_q-(m^{2n+1}0^\infty)_q\\
&=(10^na_1\cdots a_n0^\infty)_q-(0a_1\cdots a_n^+m^{n}0^\infty)_q\\
&=(10^\infty)_q-(0a_1\cdots a_n^+\overline{a_1\cdots a_n}0^\infty)_q=0.
\end{align*}
(ii) Suppose that $n=1$. Take
\begin{equation*}
{\textnormal{c}}=\overline{a_1^+}a_1^\infty\qtq{and}{\textnormal{d}}=0^{2}(\overline{a_1})^\infty.
\end{equation*}
In this case $\alpha(q)=(a_1^+\overline{a_1^+})^\infty$ and $a_1^+\ge k+1$ ($m=2k$) and $a_1^+\ge k+2$ ($m=2k+1$). Then $\textnormal{c}, \textnormal{d}\in  A'_q$ and $q=q_{{\textnormal{c}},{\textnormal{d}}}$ follow from that facts $\overline{a_1^+}<a_1<a_1^+$ and
\begin{align*}
f_{{\textnormal{c}}, {\textnormal{d}}}(q)&=(1{\textnormal{c}})_q+(m{\textnormal{d}})_q-(m^\infty)_q=(1\overline{a_1^+}a_1^\infty)_q+(m0^{2}(\overline{a_1})^\infty)_q-(m^\infty)_q\\
&=((m+1)\overline{a_1^+}a_10^\infty)_q-(m^{3}0^\infty)_q=(10a_10^\infty)_q-(0a_1^+m0^\infty)_q\\
&=(10^\infty)_q-(0a_1^+\overline{a_1}0^\infty)_q=0.
\end{align*}
So the proof is finished.
\end{proof}
The following result strengthens Theorem \ref{T4}.
\begin{proposition}\label{81}
We write
\begin{align*}
&\mathcal E_2(m):=\set{q\in(1,m+1]:1\in\overline{\uu_q}-\overline{\uu_q}}\\
&\mathcal F_2(m):=\set{q\in(1,m+1]:1\in\vv_q-\vv_q}.
\end{align*}
Then $\bb_2(m)=\mathcal E_2(m)=\mathcal F_2(m)\setminus\set{\g(m)}$.
\end{proposition}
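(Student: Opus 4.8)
The plan is to leverage the elementary chain $\uu_q\subseteq\overline{\uu_q}\subseteq\vv_q$ together with the two structural inputs already at hand, Lemma \ref{13} and Lemma \ref{19}, and to isolate the generalized golden ratio $\g(m)$ as the single exceptional point. Throughout I use Theorem \ref{T4} to rewrite $\bb_2(m)=\set{q:1\in\uu_q-\uu_q}$.

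First I would record the easy containments. Since a set inclusion $\uu_q\subseteq\overline{\uu_q}\subseteq\vv_q$ is inherited by the corresponding difference sets, the condition $1\in\uu_q-\uu_q$ forces $1\in\overline{\uu_q}-\overline{\uu_q}$, which in turn forces $1\in\vv_q-\vv_q$. Hence $\bb_2(m)\subseteq\mathcal E_2(m)\subseteq\mathcal F_2(m)$ with no further work. It then remains only to close the gap from above, namely to prove $\mathcal F_2(m)\setminus\set{\g(m)}\subseteq\bb_2(m)$, and to determine the status of the point $\g(m)$ in each of the three sets.

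For the inclusion $\mathcal F_2(m)\setminus\set{\g(m)}\subseteq\bb_2(m)$ I would dichotomize on whether $q\in\vv$. Fix $q\in\mathcal F_2(m)$ with $q\neq\g(m)$. If $q\in\vv$, then $q\in\vv\setminus\set{\g(m)}$ and Lemma \ref{13} gives $q\in\bb_2(m)$ at once. If $q\notin\vv$, then Lemma \ref{19}(ii) yields $\uu_q=\overline{\uu_q}=\vv_q$, so the hypothesis $1\in\vv_q-\vv_q$ reads $1\in\uu_q-\uu_q$, and Theorem \ref{T4} gives $q\in\bb_2(m)$. In either case $q\in\bb_2(m)$, which establishes the inclusion. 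The analysis of $q=\g(m)$ is where the only genuine subtlety lies, since this is precisely the point that separates $\mathcal F_2(m)$ from $\mathcal E_2(m)$: on the one hand $\g(m)\in\vv$ means $1\in\vv_{\g(m)}$, and since $0\in\uu_{\g(m)}\subseteq\vv_{\g(m)}$ we get $1=1-0\in\vv_{\g(m)}-\vv_{\g(m)}$, so $\g(m)\in\mathcal F_2(m)$; on the other hand $\uu_{\g(m)}=\set{0,\tfrac{m}{\g(m)-1}}$ is a finite (hence closed) set, so $\overline{\uu_{\g(m)}}=\uu_{\g(m)}$ and $\overline{\uu_{\g(m)}}-\overline{\uu_{\g(m)}}=\set{0,\pm\tfrac{m}{\g(m)-1}}$, which contains $1$ only when $\tfrac{m}{\g(m)-1}=1$, i.e. $\g(m)=m+1$, contradicting \eqref{Gm}. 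Thus $\g(m)\notin\mathcal E_2(m)$.

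Finally I would assemble the pieces. From $\mathcal E_2(m)\subseteq\mathcal F_2(m)$ and $\g(m)\notin\mathcal E_2(m)$ we obtain $\mathcal E_2(m)\subseteq\mathcal F_2(m)\setminus\set{\g(m)}$; combining this with the chain $\mathcal F_2(m)\setminus\set{\g(m)}\subseteq\bb_2(m)\subseteq\mathcal E_2(m)$ forces all the inclusions to be equalities, giving $\bb_2(m)=\mathcal E_2(m)=\mathcal F_2(m)\setminus\set{\g(m)}$. The heavy lifting has been done in Lemmas \ref{12}, \ref{13} and in Theorem \ref{T4}; the main (though short) obstacle here is the correct bookkeeping at $\g(m)$, namely showing it belongs to $\mathcal F_2(m)$ yet fails to belong to $\mathcal E_2(m)$ because $\overline{\uu_{\g(m)}}$ collapses to the two endpoints, which is exactly what produces the excised point in the statement.
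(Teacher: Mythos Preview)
Your proof is correct and follows the same overall scheme as the paper: start from the trivial chain $\bb_2(m)\subseteq\mathcal E_2(m)\subseteq\mathcal F_2(m)$ induced by $\uu_q\subseteq\overline{\uu_q}\subseteq\vv_q$, then close it using Lemma~\ref{19} and Lemma~\ref{13}. There is, however, a small but genuine difference in how you treat $\mathcal E_2(m)$. The paper handles $\mathcal E_2(m)$ and $\mathcal F_2(m)$ in parallel: it invokes Lemma~\ref{19}(i) to get $\mathcal E_2(m)\subseteq\bb_2(m)\cup\overline{\uu}$ and then Lemma~\ref{12} ($\overline{\uu}\subset\bb_2(m)$) to conclude $\mathcal E_2(m)=\bb_2(m)$ directly. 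You instead bypass Lemma~\ref{19}(i) and Lemma~\ref{12} altogether by checking explicitly that $\g(m)\notin\mathcal E_2(m)$ (via $\overline{\uu_{\g(m)}}=\uu_{\g(m)}=\{0,\tfrac{m}{\g(m)-1}\}$), which lets you sandwich $\mathcal E_2(m)\subseteq\mathcal F_2(m)\setminus\{\g(m)\}\subseteq\bb_2(m)\subseteq\mathcal E_2(m)$. Your route is slightly more economical in that it only needs part~(ii) of Lemma~\ref{19} and Lemma~\ref{13}; the paper's route is more symmetric and makes the role of $\overline{\uu}$ explicit. Both are equally valid, and your explicit verification that $\g(m)\in\mathcal F_2(m)$ but $\g(m)\notin\mathcal E_2(m)$ is a nice addition, since it shows that the excision of $\g(m)$ is genuinely needed (the paper's proof asserts $\g(m)\notin\bb_2(m)$ without spelling out the computation). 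Your passing mention of Lemma~\ref{12} at the end is misleading, since your argument never actually uses it.
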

\begin{proof}
By Theorem \ref{T4} we have $\bb_2(m)=\set{q\in(1,m+1]:1\in\uu_q-\uu_q}$. Moreover, $\uu_q\subset\overline{\uu_q}\subset\vv_q$. Hence we have $\bb_2(m)\subset\mathcal E_2(m)\subset\mathcal F_2(m)$.

Applying Lemma \ref{19} (i) we see that $\mathcal E_2(m)\subset\bb_2(m)\cup\overline{\uu}$. Thus it follows from Lemma \ref{12} that $\mathcal E_2(m)=\bb_2(m)$. On the other hand, according to Lemma \ref{19} (ii) we know that $\mathcal F_2(m)\subset\bb_2(m)\cup\vv$. Then it follows from Lemma \ref{13} and $\g(m)\notin\bb_2(m)$ that $\mathcal F_2(m)=\bb_2(m)\cup\set{\g(m)}$.
\end{proof}

\begin{lemma}\label{22}
$
\vv\setminus{\g(m)}\subset\bb^{(2)}_2(m)
$
\end{lemma}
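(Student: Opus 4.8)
The plan is to split $\vv\setminus\{\g(m)\}$ into the part inside $\overline{\uu}$, which is already handled, and the generic part, where I will build a \emph{nested, self-similar} family of admissible pairs whose associated zeros realize the double accumulation. First, by Lemma~\ref{12} we have $\overline{\uu}\subset\bb_2^{(\infty)}(m)\subseteq\bb_2^{(2)}(m)$, so it suffices to take $q\in\vv\setminus(\overline{\uu}\cup\{\g(m)\})$. For such $q$, Lemma~\ref{18}(iii) supplies a word $w=a_1\cdots a_n$ ($n\ge1$) with $\alpha(q)=(w^+\overline{w^+})^\infty$, where $w^+=a_1\cdots a_{n-1}a_n^+$ and $(a_1\cdots a_n)^\infty$ satisfies \eqref{15}; moreover the proof of Lemma~\ref{13} exhibits $\textnormal{c}=\overline{w^+}(a_1\cdots a_n)^\infty$ and $\textnormal{d}=0^{2n}(\overline{a_1\cdots a_n})^\infty$ in $A'_q$ with $q=q_{\textnormal{c},\textnormal{d}}$, i.e. $f_{\textnormal{c},\textnormal{d}}(q)=0$. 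I will treat $n\ge2$ in detail; the case $n=1$ is identical with $w=a_1$.

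Next I would produce a sequence $r_N\searrow q$ in $\bb_2(m)$ by perturbing the tail of $\textnormal{c}$. Set
\begin{equation*}
\textnormal{c}_N:=\overline{w^+}\,(a_1\cdots a_n)^N\,(\overline{a_1\cdots a_n})^\infty .
\end{equation*}
Because the leading digit of $\overline{a_1\cdots a_n}$ is $m-a_1<a_1$, the words $\textnormal{c}_N$ are strictly increasing in $N$ with $\textnormal{c}_N\nearrow\textnormal{c}$ and $\textnormal{c}_N<\textnormal{c}$; Lemma~\ref{6}(3) then gives $f_{\textnormal{c}_N,\textnormal{d}}(q)<f_{\textnormal{c},\textnormal{d}}(q)=0$, so $(\textnormal{c}_N,\textnormal{d})\in B'_q$ and, by the existence and uniqueness of the zero guaranteed by Corollaries~\ref{c1} and~\ref{c2}, $r_N:=q_{\textnormal{c}_N,\textnormal{d}}\in\bb_2(m)$. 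Lemma~\ref{10}(1) forces $r_N$ to decrease strictly, and the continuity of $f$ in the sequence argument (Lemma~\ref{6}(2)) together with the strict monotonicity of $f_{\textnormal{c},\textnormal{d}}$ through its zero at $q$ forces $r_N\searrow q$.

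To upgrade each $r_N$ to a point of $\bb_2^{(1)}(m)$, I would, for fixed $N$, perturb the tail of $\textnormal{d}$ instead, setting
\begin{equation*}
\textnormal{d}_M:=0^{2n}\,(\overline{a_1\cdots a_n})^M\,(a_1\cdots a_n)^\infty .
\end{equation*}
These decrease strictly to $\textnormal{d}$ with $\textnormal{d}_M>\textnormal{d}$, so for all large $M$ one has $f_{\textnormal{c}_N,\textnormal{d}_M}(q)\to f_{\textnormal{c}_N,\textnormal{d}}(q)<0$ by Lemma~\ref{6}(2), whence $(\textnormal{c}_N,\textnormal{d}_M)\in B'_q$ and $r_{N,M}:=q_{\textnormal{c}_N,\textnormal{d}_M}\in\bb_2(m)$. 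Since $\textnormal{d}_M>\textnormal{d}$, Lemma~\ref{10}(2) gives $r_{N,M}<r_N$, strictly increasing to $r_N$ as $M\to\infty$. Thus $r_N$ is an accumulation point of $\bb_2(m)$, i.e. $r_N\in\bb_2^{(1)}(m)$, and as $r_N\to q$ with $r_N\neq q$ we conclude $q\in\bb_2^{(2)}(m)$.

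The hard part is the admissibility claims $\textnormal{c}_N,\textnormal{d}_M\in A'_q$, and here the reflection half of Lemma~\ref{L22} is the genuine obstacle. One cannot simply re-insert the block $\overline{w^+}$ into the periodic tail: an internal occurrence of $\overline{w^+}$ reflects to a tail beginning with the maximal block $w^+$, which violates $\overline{\sigma^s(\cdot)}<\alpha(q)$. This is exactly why the families above switch to the reflected block $\overline{a_1\cdots a_n}$, whose reflection is the harmless $a_1\cdots a_n$. The inequalities needed at the two new junctions $(a_1\cdots a_n)\mid(\overline{a_1\cdots a_n})$ in $\textnormal{c}_N$ and $(\overline{a_1\cdots a_n})\mid(a_1\cdots a_n)$ in $\textnormal{d}_M$ are precisely \eqref{e42}, \eqref{e43} and the double inequality \eqref{15}, so the verification runs parallel to the admissibility check already carried out in Lemma~\ref{13}; the only delicate points are the borderline equality cases $\overline{a_{i+1}\cdots a_n}=a_1\cdots a_{n-i}$, which must be resolved using the full strength of \eqref{15}.
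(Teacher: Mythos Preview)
Your overall architecture---perturb one coordinate to get $r_N\searrow q$, then perturb the other to make each $r_N$ an accumulation point---is exactly the paper's strategy. The gap is in the specific choice of tails. Your sequences
\[
\textnormal{c}_N=\overline{w^+}(a_1\cdots a_n)^N(\overline{a_1\cdots a_n})^\infty,\qquad
\textnormal{d}_M=0^{2n}(\overline{a_1\cdots a_n})^M(a_1\cdots a_n)^\infty
\]
are \emph{not} in $A'_q$ in general, and the failure is not a borderline equality that \eqref{15} can resolve. Take $m=1$, $a_1a_2=10$ (so $w^+=11$, $\alpha(q)=(1100)^\infty$). Then $\textnormal{c}_1=00\,10\,(01)^\infty$; at the index $s$ where $c_s=1$ is the last $1$ of the $(10)^N$ block, the suffix is $0(01)^\infty$, whose reflection is $1(10)^\infty=11010\cdots>\alpha(q)=11001\cdots$, so the second inequality in Lemma~\ref{L22} fails. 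Likewise in $\textnormal{d}_M$ the junction $\overline{a_1\cdots a_n}\mid a_1\cdots a_n$ produces the suffix $1(10)^\infty$ in the forward direction, again $>\alpha(q)$. This is not an artifact of $N=1$ or $M=1$: the same forbidden tail $1(10)^\infty$ occurs at the junction for every $N,M$, and it remains $>\alpha(p)$ for all $p$ in a fixed neighbourhood $(q,q')$ of $q$; since your zeros $r_N,r_{N,M}$ converge to $q$, they eventually fall below $q'$ and Lemma~\ref{5} no longer applies. Your remark that ``the reflection is the harmless $a_1\cdots a_n$'' overlooks shifts that begin \emph{inside} the preceding block: the pattern $a_n\overline{a_1}\overline{a_2}\cdots$ reflects to $\overline{a_n}a_1a_2\cdots$, and whenever $a_1+a_n=m$ (as here) this matches the start of $\alpha(q)$ long enough to violate the strict inequality.

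The paper circumvents this by using the tail $(\overline{a_1\cdots a_n^+}a_1\cdots a_n^+)^\infty$ in both perturbed families, i.e.\ the periodic block of $\overline{\alpha(q)}$ itself. That tail only achieves \emph{equality} with $\alpha(q)$ under reflection, so the paper verifies admissibility not at $q$ but at every $p>q$ (where $\alpha(p)>\alpha(q)$ gives the needed strict inequality); since all constructed zeros lie strictly above $q$, this is enough. If you replace your tails by $(\overline{w^+}w^+)^\infty$ and carry out the admissibility check for $p\in(q,m+1]$ rather than at $q$, your argument goes through and becomes the paper's.
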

\begin{proof}
It suffices to prove that $\vv\setminus(\overline{\uu}\cup\set{\g(m)})\subset\bb^{(2)}_2(m)$ by Lemma \ref{12}. Fix $q\in\vv\setminus(\overline{\uu}\cup\set{\g(m)})$ arbitrarily. By Lemma \ref{18} (iii) there exists a word $a_1\cdots a_n$ with $n\ge1$ such that $\alpha(q)=(a_1\cdots a_n^+\overline{a_1\cdots a_n^+})^\infty$ and for all $0<i<n$
\begin{equation}\label{75}
\overline{(a_1\cdots a_n)^\infty}\le \sigma^i((a_1\cdots a_n)^\infty)<(a_1\cdots a_n)^\infty.
\end{equation}
Set
\begin{align*}
\textnormal{c}=\overline{a_1\cdots a_n^+}(a_1\cdots a_n)^\infty, \textnormal{d}=0^{2n}(\overline{a_1\cdots a_n})^\infty
\intertext{and}
\textnormal{d}_j=0^{2n}(\overline{a_1\cdots a_n})^j(\overline{a_1\cdots a_n^+}a_1\cdots a_n^+)^\infty.
\end{align*}
(i) Suppose that $n\ge2$. According to the proof of Lemma \ref{13} one gets $\textnormal{c}, \textnormal{d}\in A'_q$ and $q=q_{\textnormal{c},\textnormal{d}}$. We will show that $\textnormal{d}_j\in A'_p$ for all $j\ge1$ and $p\in(q,m+1]$.
Since $q\in\vv$,  by Lemma \ref{17} (iii) it suffices to prove
\begin{align*}
\overline{\alpha(p)}<\sigma^i((\overline{a_1\cdots a_n})^j(\overline{a_1\cdots a_n^+}a_1\cdots a_n^+)^\infty)<\alpha(p)
\end{align*}
hold for all $0\le i<nj$. It follows from \eqref{75} and $\overline{a_1}<a_1$ that
\begin{align*}
\overline{a_1\cdots a_n^+}<\overline{a_1\cdots a_n}\le \overline{a_{i+1}\cdots a_na_1\cdots a_i}\le a_1\cdots a_n<a_1\cdots a_n^+
\end{align*}
for $0\le i<n$. Hence $\textnormal{d}_j\in A'_p$ for all $j\ge1$. Next we prove  $q_{\textnormal{c}, \textnormal{d}_j}\in\bb_2(m)$.
\begin{align*}
f_{\textnormal{c}, \textnormal{d}_j}(q)&=(1\textnormal{c})_q+(m0^{2n}(\overline{a_1\cdots a_n})^j(\overline{a_1\cdots a_n^+}a_1\cdots a_n^+)^\infty)_q-(m^\infty)_q\\
&=(1\textnormal{c})_q+(m0^{2n}(\overline{a_1\cdots a_n})^{j+1}0^\infty)_q-(m^\infty)_q\\
&<(1\textnormal{c})_q+(m0^{2n}(\overline{a_1\cdots a_n})^{\infty})_q-(m^\infty)_q=f_{\textnormal{c}, \textnormal{d}}(q)=0.
\end{align*}
By Corollaries \ref{c1} and \ref{c2} $f_{\textnormal{c}, \textnormal{d}_j}(t)=0$ has a unique root $q_{{\textnormal{c}, \textnormal{d}_j}}$ in $(q,m+1]$ for all $j\ge1$, i.e., $q_{\textnormal{c}, \textnormal{d}_j}\in\bb_2(m)$. Applying Lemma \ref{10} and the continuity of $f_{\textnormal{c}, \textnormal{d}}$ w.r.t.  $(\textnormal{c},\textnormal{d})$, we infer that $q_{{\textnormal{c}, \textnormal{d}_j}}\searrow q$ as $j\rightarrow\infty$.
Let
\begin{equation*}
\textnormal{c}_\ell=\overline{a_1\cdots a_n^+}(a_1\cdots a_n)^\ell(\overline{a_1\cdots a_n^+}a_1\cdots a_n^+)^\infty.
\end{equation*}
 Note that $f_{\textnormal{c}_\ell, \textnormal{d}_j}(q)<f_{\textnormal{c}, \textnormal{d}_j}(q)<0$. By the same argument we  can conclude that for each fixed $j\ge1$ we have $q_{{\textnormal{c}_\ell, \textnormal{d}_j}}\in\bb_2(m)$ for all $\ell\ge1$,  and   $q_{{\textnormal{c}_\ell, \textnormal{d}_j}}\searrow q_{{\textnormal{c}, \textnormal{d}_j}}$ as $\ell\rightarrow\infty$. Then for each $j$, $q_{{\textnormal{c}, \textnormal{d}_j}}\in\bb_2^{(1)}(m)$. 

(ii) Suppose that $n=1$. Then $\alpha(q)=(a_1^+\overline{a_1^+})^\infty$, let
$
\textnormal{c}=\overline{a_1^+}a_1^\infty $, $\textnormal{d}=0^{2}\overline{a_1}^\infty $, $\textnormal{c}_\ell=\overline{a_1^+}(a_1)^\ell(\overline{a_1^+}a_1^+)^\infty$ and
$\textnormal{d}_j=0^{2}(\overline{a_1})^j(\overline{a_1^+}a_1^+)^\infty$.
By the same argument as the first case  we can conclude that  $q\in\bb^{(2)}_2(m)$.
\end{proof}

\begin{proof}[Proof of Theorem \ref{T3}]
We get the result by Theorem \ref{T4} and Proposition \ref{81}.
\end{proof}

\begin{proof}[Proof of Theorem \ref{T1}]
Results (i), (ii) and (iii) follow from Lemmas \ref{21}, \ref{12} and \ref{22}, respectively. Result (iv) follows from Lemma \ref{22} and the fact that the set $(\g(m),q_f(m))\cap\bb_2(m)$ is finite.
\end{proof}

\section{Some results on unique expansions}
Recall that $\uu$ is the set of univoque bases $q$ in which $1$ has unique expansion, $\overline{\uu}$  the topological closure of $\uu$, and $\vv$  the set of bases $q$ in which $1$ has unique doubly infinite expansion. Let
$$
(1,m+1]\setminus\overline{\uu}=\cup(p_0,p^*_0),
$$
where $p_0$ runs over $\set{1}\cup(\overline{\uu}\setminus\uu)$ and $p_0^*$ runs over a proper subset of $\overline{\uu}$. It was proved in \cite{KL} that $p_0$ is an algebraic number while $p^*_0$ is a transcendental number.
Now let
\begin{equation}\label{bigm}
(M,m+1]\setminus\overline{\uu}=\cup(q_0,q^*_0),\; M=\left \lfloor \frac{m}{2} \right \rfloor+1.
\end{equation}
In this section we shall give a description of $\uu '_{q^*_0}$.

 Note from \cite{KL1,VKL}, for each connected component $(q_0,q^*_0)$ there exists a finite word $a_1\cdots a_n$
such that $\alpha(q_0)=(a_1\cdots a_n)^\infty \in \Omega _m ^{\mathbb N}$ where  $a_1\cdots a_n$ is assumed the smallest periodic block. The right endpoint $q^*_0$ is the limit of sequence $\{q_\ell\}$ defined below.
Let
\begin{equation}\label{czero}
c_0^-:=a_1\cdots a_n\;\textrm{and}\; c_{\ell+1}=c_\ell\overline{c_\ell}^+, \;\;\ell=0,1,\cdots.
\end{equation}
Then $(c_i)$ is a Thue-Morse type sequence generated by $c_0^-$ (cf. \cite{A}).
From \cite{VKL} it follows that for each $\ell=1,2,\cdots$ there exists $q_\ell \in (q_0, q_0^*)$ such that $\beta (q_\ell )=c_\ell0^\infty$. de Vries and Komornik
obtained in \cite{KL1,VKL} that for each connected component $(q_0,q^*_0)$
\begin{equation*}
\vv\cap(q_0,q^*_0)=\set{q_\ell;\ell\in\mathbb N}\;\textrm{and}\; q_\ell \uparrow q^*_0.
\end{equation*}

%
%

We recall some standard results established in \cite{KL}:
\begin{lemma}\label{30}
Let $M, c_\ell$ be given in (\ref{bigm}) and (\ref{czero}). Let $(q_0,q^*_0)$ be a connected component of $(M,m+1]\setminus\overline{\uu}$ related to $c_0^-$, and $(d_i)\in\uu_{q^*_0}'$ w.r.t. $\Omega_m$.
\begin{enumerate}[\upshape (i)]
\item If $d_j<m$ and $d_{j+1}\cdots d_{j+2^\ell n}=c_\ell$  for some $\ell\ge0$, then
\begin{equation*}
d_{j+2^\ell n+1}\cdots d_{j+2^{\ell+1}n}=\overline{c_{\ell}}\quad \text{or}
\quad d_{j+2^\ell n+1}\cdots d_{j+2^{\ell+1}n}=\overline{c_{\ell}}^+.
\end{equation*}
\item If $d_j>0$ and $d_{j+1}\cdots d_{j+2^\ell n}=\overline{c_{\ell}}$  for some $\ell\ge0$, then
\begin{equation*}
d_{j+2^\ell n+1}\cdots d_{j+2^{\ell+1}n}=c_{\ell}\quad \text{or}
\quad d_{j+2^\ell n+1}\cdots d_{j+2^{\ell+1}n}=c_{\ell}^-.
\end{equation*}
\end{enumerate}
\end{lemma}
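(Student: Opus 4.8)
The plan is to reduce both parts to the lexicographic characterization of unique expansions in Lemma \ref{L22}, after first pinning down $\alpha(q_0^*)$ explicitly. The preliminary step is to identify the quasi-greedy expansion $\alpha:=\alpha(q_0^*)$. Since each $\beta(q_\ell)=c_\ell 0^\infty$ is a finite greedy expansion, the standard greedy/quasi-greedy relation gives $\alpha(q_\ell)=(c_\ell^-)^\infty$. As $\ell\to\infty$ the words $c_\ell$ are nested prefixes, because $c_{\ell+1}=c_\ell\overline{c_\ell}^+$ by \eqref{czero}, so $(c_\ell^-)^\infty$ agrees with the Thue--Morse limit $\mathbf c=\lim_\ell c_\ell$ on its first $2^\ell n-1$ digits; using that $q\mapsto\alpha(q)$ is left-continuous (Lemma \ref{16}) and that $q_\ell\uparrow q_0^*$, this yields $\alpha=\mathbf c$. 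From this I extract the two structural facts I need: (a) $c_\ell=\alpha|_{2^\ell n}$ and $\alpha|_{2^\ell n+1,\,2^{\ell+1}n}=\overline{c_\ell}^+$, so $\sigma^{2^\ell n}(\alpha)$ begins with $\overline{c_\ell}^+$; and (b) the last digit of every $c_\ell$ is positive (an immediate induction from $c_0=a_1\cdots a_n^+$ and $c_{\ell+1}=c_\ell\overline{c_\ell}^+$), so the last digit of $\overline{c_\ell}$ is $<m$.

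For part (i), write $W:=d_{j+2^\ell n+1}\cdots d_{j+2^{\ell+1}n}$, a word of length $2^\ell n$. For the upper bound, $d_j<m$ and Lemma \ref{L22} give $d_{j+1}d_{j+2}\cdots<\alpha$; since $d_{j+1}\cdots d_{j+2^\ell n}=c_\ell=\alpha|_{2^\ell n}$ the two sequences agree on their first $2^\ell n$ digits, so the strict inequality passes to the tails, $\sigma^{2^\ell n}(d_{j+1}d_{j+2}\cdots)<\sigma^{2^\ell n}(\alpha)$, and as the latter begins with $\overline{c_\ell}^+$ this forces $W\le\overline{c_\ell}^+$. For the lower bound, fact (b) gives $d_{j+2^\ell n}>0$, so the reflected inequality of Lemma \ref{L22} yields $\overline{d_{j+2^\ell n+1}d_{j+2^\ell n+2}\cdots}<\alpha$; comparing the first $2^\ell n$ digits gives $\overline{W}\le c_\ell$, i.e.\ $W\ge\overline{c_\ell}$. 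Thus $\overline{c_\ell}\le W\le\overline{c_\ell}^+$, and since $\overline{c_\ell}^+$ differs from $\overline{c_\ell}$ only in its last digit, the only words of length $2^\ell n$ in this range are $\overline{c_\ell}$ and $\overline{c_\ell}^+$, which is the claim.

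For part (ii) I would invoke symmetry: by \eqref{e30} the set $\uu'_{q_0^*}$ is closed under reflection, so $(\overline{d_i})\in\uu'_{q_0^*}$. Now $d_j>0$ means $\overline{d_j}<m$, and $\overline{d_{j+1}\cdots d_{j+2^\ell n}}=c_\ell$, so part (i) applied to $(\overline{d_i})$ gives $\overline{W}\in\{\overline{c_\ell},\overline{c_\ell}^+\}$, i.e.\ $W\in\{c_\ell,\overline{\overline{c_\ell}^+}\}$; a one-line digit computation shows $\overline{\overline{c_\ell}^+}=c_\ell^-$, which is the claim. Alternatively (ii) runs directly in parallel to (i): the upper bound $W\le c_\ell$ comes from $d_{j+2^\ell n}=$ last digit of $\overline{c_\ell}<m$, and the lower bound $W\ge c_\ell^-$ comes from $d_j>0$ together with the agreement $\overline{d_{j+1}\cdots d_{j+2^\ell n}}=c_\ell=\alpha|_{2^\ell n}$.

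The main obstacle is the preliminary identification $\alpha(q_0^*)=\mathbf c$ together with the exact block description $\alpha|_{2^\ell n+1,\,2^{\ell+1}n}=\overline{c_\ell}^+$: this is precisely what makes the upper and lower lexicographic bounds meet so tightly that only two continuations survive. Once that structure is secured, both parts are routine sandwich arguments from Lemma \ref{L22}; the only points demanding care are applying the strict inequalities at the correct indices ($j$ and $j+2^\ell n$), keeping track of the order-reversal under reflection, and checking the digit-level identity $\overline{\overline{c_\ell}^+}=c_\ell^-$.
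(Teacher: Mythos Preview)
Your argument is correct. Note, however, that the paper does not actually prove Lemma~\ref{30}: it is quoted verbatim as a ``standard result established in \cite{KL}'' (Kong--Li), so there is no proof in the paper to compare against. What you have written is a clean, self-contained reconstruction of the standard argument: identify $\alpha(q_0^*)$ with the Thue--Morse-type limit $\mathbf c=\lim_\ell c_\ell$ (using $\beta(q_\ell)=c_\ell 0^\infty$, hence $\alpha(q_\ell)=(c_\ell^-)^\infty$, together with left-continuity of $q\mapsto\alpha(q)$ and $q_\ell\uparrow q_0^*$), read off $\alpha|_{2^\ell n}=c_\ell$ and $\sigma^{2^\ell n}(\alpha)|_{2^\ell n}=\overline{c_\ell}^+$, and then sandwich the next block $W$ via the two inequalities of Lemma~\ref{L22} applied at positions $j$ and $j+2^\ell n$. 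The reflection reduction of (ii) to (i) is also fine, and the digit identity $\overline{\,\overline{c_\ell}^+\,}=c_\ell^-$ is the right bookkeeping step. This is almost certainly the same route taken in \cite{KL}; in any case it is the natural one and your write-up covers the delicate points (strict vs.\ non-strict inequalities, positivity of the last digit of $c_\ell$) adequately.
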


\begin{lemma}\label{29} \cite[Lemma 4.2]{KL}
Let $M, c_\ell$ be given in (\ref{bigm}) and (\ref{czero}).
Let $(q_0,q^*_0)$ be a connected component of $(M,m+1]\setminus\overline{\uu}$ related to $c_0^-$. Then for any $\ell\ge0$, $c_\ell=a_1\cdots a_{2^\ell n}$ satisfies
\begin{equation*}
\overline{a_1\cdots a_{2^\ell n-i}}<a_{i+1}\cdots a_{2^\ell n}\le a_1\cdots a_{2^\ell n-i}
\end{equation*}
for all $0\le i<2^\ell n$.
\end{lemma}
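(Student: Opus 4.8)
The plan is to prove both inequalities by induction on $\ell$, exploiting the self-similar recursion $c_{\ell+1}=c_\ell\overline{c_\ell}^+$ from \eqref{czero}. Throughout I abbreviate $N=2^\ell n$ and write the current block as $c_\ell=a_1\cdots a_N$, so that inside the induction $a_i$ always means the $i$-th digit of $c_\ell$. I would first record two elementary facts about reflection that get used repeatedly: for finite words $x,y$ of equal length one has $x<y\iff\overline{x}>\overline{y}$, and incrementing/decrementing the last digit interacts with reflection via $\overline{x^+}=\overline{x}^-$ and $\overline{x^-}=\overline{x}^+$. The target for each $\ell$ is $\overline{a_1\cdots a_{N-i}}<a_{i+1}\cdots a_N\le a_1\cdots a_{N-i}$ for all $0\le i<N$.

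For the base case $\ell=0$, since $q_0\in\overline{\uu}\setminus\uu$ Lemma \ref{18}(ii) gives $\alpha(q_0)=(c_0^-)^\infty$ with $n$ the minimal period, and hence $\beta(q_0)=c_0\,0^\infty$. The right inequality $a_{i+1}\cdots a_n\le a_1\cdots a_{n-i}$ I would read off directly from the greedy Parry condition $\sigma^i(\beta(q_0))<\beta(q_0)$ of Lemma \ref{161}, which for $0<i<n$ compares $a_{i+1}\cdots a_n0^\infty$ with $a_1\cdots a_n0^\infty$ on the first $n-i$ symbols. For the strict left inequality I would feed $\overline{\alpha(q_0)}<\sigma^i(\alpha(q_0))$ from Lemma \ref{17}(ii) into the leading length-$(n-i)$ block: because $\alpha(q_0)=(c_0^-)^\infty$ carries a \emph{decremented} last digit, the comparison becomes $\overline{a_1\cdots a_{n-i}}\le a_{i+1}\cdots a_{n-1}(a_n-1)<a_{i+1}\cdots a_n$, which is automatically strict. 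The remaining index $i=0$, i.e. $\overline{c_0}<c_0$, is precisely where the restriction $q_0\in(M,m+1]$ with $M=\lfloor m/2\rfloor+1$ enters, forcing $\alpha_1(q_0)>m/2$ and hence $\overline{a_1}<a_1$.

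For the inductive step I would assume the inequalities for $c_\ell=a_1\cdots a_N$ and verify them for $c_{\ell+1}=a_1\cdots a_N\,\overline{a_1\cdots a_N}^+$ (length $2N$), splitting on the position of $i$. For $N<i<2N$, writing $i=N+i'$, both sides collapse to a suffix of $\overline{c_\ell}^+$ and a prefix of $c_\ell$, and the claim reduces to $\overline{a_1\cdots a_{N-i'}}<\overline{a_{i'+1}\cdots a_N}^+\le a_1\cdots a_{N-i'}$; this follows by reflecting the induction hypothesis at index $i'$ (its right inequality gives the left one here, its left inequality gives the right one, using the two reflection identities). The boundary $i=N$ reduces to $\overline{c_\ell}<\overline{c_\ell}^+\le c_\ell$, where the left part is trivial and the right part follows from $\overline{c_\ell}<c_\ell$ (the hypothesis at $i=0$) together with the observation that incrementing the last digit of a word strictly below $c_\ell$ keeps it $\le c_\ell$.

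The delicate range $0\le i<N$ I expect to be the main obstacle. Here the suffix $a_{i+1}\cdots a_N\,\overline{c_\ell}^+$ straddles both halves and is compared with the prefix $c_\ell\,\overline{a_1\cdots a_{N-i}}$. The left inequality drops out at once from the induction hypothesis (left, index $i$) applied to the leading length-$(N-i)$ block. The right inequality is the subtle point: the leading block gives $a_{i+1}\cdots a_N\le a_1\cdots a_{N-i}$, and if this is strict we are done, but in the equality case $a_{i+1}\cdots a_N=a_1\cdots a_{N-i}$ the comparison is decided only by the next block, and there I would invoke the hypothesis a \emph{second} time, namely the left inequality at the complementary index $N-i$, which yields $\overline{a_1\cdots a_i}<a_{N-i+1}\cdots a_N$ and restores strictness. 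Managing this cross-use of the two inequalities at two different indices is the crux of the argument; everything else is routine lexicographic bookkeeping.
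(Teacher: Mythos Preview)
The paper does not prove this lemma itself; it is quoted from \cite[Lemma~4.2]{KL}, so there is no in-paper argument to compare against. Your inductive proof is independent and essentially correct: the recursion $c_{\ell+1}=c_\ell\overline{c_\ell}^+$ together with the reflection identities does exactly what you describe, and the double invocation of the hypothesis---at index $i$ for the leading block and at the complementary index $N-i$ in the equality sub-case---is precisely the mechanism that makes the right-hand inequality go through.

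One small wrinkle worth cleaning up: in your base case you appeal to Lemma~\ref{17}(ii), which characterises $\overline{\uu}$, and you write ``$q_0\in(M,m+1]$''. But the left endpoint of the \emph{first} component of $(M,m+1]\setminus\overline{\uu}$ is $q_0=M=\lfloor m/2\rfloor+1$ itself, and $M\notin\overline{\uu}$ (indeed $M<q_{KL}=\min\overline{\uu}$). This does not break your argument, because for $q_0=M$ one has $\alpha(q_0)=k^\infty$, so $n=1$ and the only index to check is $i=0$, which you already treat separately via $a_1>m/2$. For every other component $q_0\in\overline{\uu}\setminus\uu$ and Lemma~\ref{17}(ii) applies as you say. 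It would be cleaner to state this dichotomy explicitly.
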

Our first result is

\begin{theorem}\label{61}
Let $M, c_\ell$ be given in (\ref{bigm}) and (\ref{czero}).
Let $(q_0,q^*_0)=(M,q_{KL})$ be the first connected component of $(M,m+1]\setminus\overline{\uu}$ related to $c_0^-$.
\begin{enumerate}[\upshape (I)]


\item Suppose that  $m=2k+1\ge3$. Then $(b_i)\in\uu_{q_{KL}}'\setminus\set{0^\infty,m^\infty}$ if and only if $(b_i)$ is formed by sequences of the form
\begin{equation}\label{251}
\omega (c_0^-)^j(c_{i_1}\overline{c_{i_1}})^{j_1}(c_{i_1}\overline{c_{i_2}})^{l_1}(c_{i_2}\overline{c_{i_2}})^{j_2}(c_{i_2}\overline{c_{i_3}})^{l_2}\cdots
\end{equation}
or their reflections, where
$0\le i_1<i_2<\cdots $ are integers, $1\leq j\leq 2$, $0\leq l_i\leq 1$,  $0\le  j_i\le\infty $ for all $i\ge 1$ and
\begin{align*}
 \omega \in & \{1,\cdots, m-1\}\cup \bigcup _{N=1}^\infty \{0^Nb: 0<b\leq k+1\}\\
 &\cup \bigcup _{N=1}^\infty \{m^Nb: k\leq b <m\}.
\end{align*}

\item Suppose that  $m=2k$. Then $(b_i)\in\uu_{q_{KL}}'\setminus\set{0^\infty,m^\infty}$ if and only if $(b_i)$ is formed by sequences of the form
\begin{equation}\label{25}
\omega (c_0^-)^j(c_{i_1}\overline{c_{i_1}})^{j_1}(c_{i_1}\overline{c_{i_2}})^{l_1}(c_{i_2}\overline{c_{i_2}})^{j_2}(c_{i_2}\overline{c_{i_3}})^{l_2}\cdots
\end{equation}
or their reflections, where  $0\le i_1<i_2<\cdots $ are integers, $0\leq l_i\leq 1$,  $0\le j, j_i\le\infty $ for all $i\ge 1$ and
\begin{align*}
\omega \in & \{1,\cdots, m-1\}\cup \bigcup _{N=1}^\infty \{0^Nb: 0<b\leq k+1\}\\
&\cup \bigcup _{N=1}^\infty \{m^Nb: k-1\leq b <m\}.
\end{align*}

\end{enumerate}
\end{theorem}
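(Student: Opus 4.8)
The plan is to prove both implications by combining the admissibility criterion of Lemma \ref{L22} with the rigid block-forcing supplied by Lemma \ref{30}, using that for the first component $(M,q_{KL})$ one has $c_0^-=k$ and $n=1$, while each $c_\ell$ is a prefix of $\alpha(q_{KL})$ (equivalently $\alpha(q_{KL})=\lim_\ell c_\ell$), as read off from \eqref{KL} and \eqref{czero}. The single observation that drives the whole parity split is that $\overline{k}=k$ when $m=2k$ but $\overline{k}=k+1$ when $m=2k+1$; this is why the initial run $(c_0^-)^j=k^j$ stays admissible for every $j$ in the even case yet forces $j\le 2$ in the odd case (since then the reflected tail $(k+1)^j$ must stay below $\alpha(q_{KL})=(k+1)(k+1)k\cdots$). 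Because $\overline{(b_i)}\in\uu'_{q_{KL}}$ whenever $(b_i)\in\uu'_{q_{KL}}$, it suffices throughout to treat one sequence of each reflection pair, which is exactly why the stated forms appear ``or their reflections''.

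For the necessity direction I would first isolate the prefix $\omega$. Applying \eqref{uicodedes} to the opening digits, using $\alpha_1(q_{KL})=k+1$ together with its reflected counterpart, pins down the admissible starts: an interior digit in $\{1,\dots,m-1\}$, a block $0^Nb$ with $0<b\le k+1$, or a block $m^Nb$ with the stated lower bound on $b$ (namely $k$ for $m=2k+1$ and $k-1$ for $m=2k$, which is precisely the extreme digit for which the reflected first digit $\overline{b}$ still satisfies $\overline{b}\le k+1=\alpha_1(q_{KL})$). Once the sequence has entered a copy of some $c_\ell$ preceded by a digit $<m$, Lemma \ref{30}(i) forces the next length-$2^\ell n$ block to be $\overline{c_\ell}$ or $\overline{c_\ell}^+$; in the latter case $c_\ell\overline{c_\ell}^+=c_{\ell+1}$ promotes the level, after which Lemma \ref{30}(ii) applies to the freshly completed $\overline{c_{\ell+1}}$-block. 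Iterating these two alternatives is exactly what generates the nested pattern
\begin{equation*}
(c_{i_1}\overline{c_{i_1}})^{j_1}(c_{i_1}\overline{c_{i_2}})^{l_1}(c_{i_2}\overline{c_{i_2}})^{j_2}\cdots,
\end{equation*}
each level being crossed at most once before returning, which forces $l_i\le 1$, while the repetition counts $j_i$ are free because $c_\ell\overline{c_\ell}$ remains admissible; the initial exponent $j$ is governed by the special block $c_0^-=k$ as above.

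For the sufficiency direction I would take an arbitrary sequence of the form \eqref{251} (resp.\ \eqref{25}) and verify the two inequalities of \eqref{uicodedes} at every index. This reduces to block boundaries: by the self-similarity $\alpha(q_{KL})=\lim_\ell c_\ell$ and the admissibility of each $c_\ell$ recorded in Lemma \ref{29}, any tail beginning inside a $c_\ell$- or $\overline{c_\ell}$-block is strictly dominated by $\alpha(q_{KL})$ whenever the preceding digit permits, and symmetrically for the reflected tails. The only nonautomatic checks occur at the junction between $\omega$ and the first block and within the initial run $(c_0^-)^j$; here the stated bounds on $b$ and on $j$ are exactly the thresholds that keep both inequalities strict, so $(b_i)\in\uu'_{q_{KL}}$.

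The main obstacle I anticipate is the necessity direction, and within it the boundary bookkeeping rather than the tail structure. Forcing the Thue--Morse blocks in the tail is a clean induction on the level through Lemma \ref{30}; the delicate part is showing the prefix classification is both exhaustive and tight, i.e.\ that no other opening, and no exponent outside the stated ranges (in particular $l_i>1$, or $j\ge 3$ in the odd case), can survive the lexicographic comparison against $\alpha(q_{KL})$ simultaneously in its plain and reflected forms. Carrying the two parities in parallel while tracking the self-reflective digit $k$ is where the argument is most error-prone.
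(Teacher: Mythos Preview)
Your proposal is correct and tracks the paper's proof closely: both arguments first pin down the prefix $\omega$ via Lemma~\ref{L22} applied at the first non-extreme digit, then drive the tail using the two-branch alternative of Lemma~\ref{30} (the paper records this as Figure~1), with sufficiency delegated to the block inequalities of Lemma~\ref{29}; your parity explanation for the bound on $j$ is exactly the paper's computation, just phrased through the reflection $\overline{k^j}=(k+1)^j$ rather than directly through the two-sided inequality~\eqref{oct9}. The only point where your wording is looser than the paper's is the justification ``each level being crossed at most once before returning, which forces $l_i\le 1$'': the levels $i_1<i_2<\cdots$ are strictly increasing and never return, so $l_i\in\{0,1\}$ is simply the dichotomy in Lemma~\ref{30} (either the block $\overline{c_\ell}$ is followed by $c_\ell$, giving another $c_\ell\overline{c_\ell}$ factor, or by $c_\ell^-$, which promotes to $\overline{c_{\ell+1}}$ and hence records a single transition block $c_{i_s}\overline{c_{i_{s+1}}}$); this is a phrasing issue, not a gap.
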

\begin{remark}
The case $m=1$ was studied in \cite{KK}, which is quite different from the case $m=2k+1\ge3$. So we assume $m=2k+1\ge3$ in Theorem \ref{61} (I).
\end{remark}
\begin{proof}
Since $(q_0,q^*_0)=(M,q_{KL})$, thus $\uu_{q_0}'=\set{0^\infty,m^\infty}$. Note that whether $m=2k$ or $m=2k+1$ we always have $q_0=M=k+1$ and so $\alpha (q_0)=k^\infty $, $c_0^-=k$.

For the sufficiency, it is not difficult to be verified by Lemma \ref{29}. We leave it for the readers.

In the following, we prove the necessity.  Take $(b_i)\in\uu_{q_0^*}'\setminus\uu_{q_0}'$. Let
$$
N=\min \{s: 0<b_s<m\}.
$$
Then $N$ is well-defined and is  a positive integer.

{\bf CASE I}. $m=2k+1$ with $k\ge1$.

 Note from \eqref{KL} $\alpha (q_{KL})=(k+1)(k+1)k(k+1)\cdots $.  By (\ref{uicodedes1}) we have for $t\ge 1$
\begin{equation}\label{li1031}
\begin{split}
&b_{t+1}b_{t+2}\cdots<(k+1)(k+1)k(k+1)\cdots ,\textrm{when}\; b_1\cdots b_t\ne m^t \\
&b_{t+1}b_{t+2}\cdots>kk(k+1)k\cdots ,  \textrm{when}\; b_1\cdots b_t\ne 0^t.
\end{split}
\end{equation}
We now split our discussion into two steps.

{\bf Step 1}. We shall show what does the block $\omega=b_1\cdots b_N$ look like. The case $N=1$ is trivial, we only consider the case $N>1$.

Subcase 1. Suppose  that  $\omega$ begins at $0$. Then by (\ref{li1031}) we have $\omega=0^{N-1}b_N$ with $0<b_{N}\leq k+1$.

Subcase 2.  Suppose that $\omega$ begins at $m$. Then by (\ref{li1031}) we have $\omega=m^{N-1}b_N$ with $k\leq b_{N}<m$.

{\bf  Step 2}. Now let us to explore the sequence $(b_{N+i})=(b_{N+i})_{i\geq 1}$.
Note  that $0<b_N<m$ and $\alpha (q_{KL})=(k+1)(k+1)k(k+1)\cdots $. Thus, by Lemma \ref{L22} we have for each $i\ge 1$
\begin{equation}\label{oct9}
kk(k+1)k\cdots<b_{N +i}b_{N +i+1}\cdots <(k+1)(k+1)k(k+1)\cdots ,
\end{equation}
and so
\begin{equation*}
k\leq b_{N+i}\leq k+1,\; (b_{N+i})_{i\geq 1}\;\textrm{not ending with}\; k^\infty \;\textrm{or}\; (k+1)^\infty .
\end{equation*}
From (\ref{oct9}) it follows that there exists $1\leq j\leq 2$ such that either $b_{N+1}\cdots b_{N+j}=k^j$ or $b_{N+1}\cdots b_{N+j}=(k+1)^j$. Without loss of generality, we assume that $b_{N+1}\cdots b_{N+j}=k^j$. Otherwise, we only need to consider
 $(\overline{b_{i}})_{i\geq 1}$ instead. So $b_{N+j+1}=k+1=c_0$. In the following we shall determine the tail $(b_{N+j+i})_{i>1}$ by means of Lemma \ref{30}.

Let us recall that
\begin{align}\label{e57}
\begin{split}
&c_0=\overline{c_0}^+=k+1,\; \;c_0^-=\overline{c_0}=k\\
&c_{\ell+1}=c_\ell\overline{c_\ell}^+,\;\; \overline{c_{\ell+1}}=\overline{c_\ell}c_\ell^-.\\
\end{split}
\end{align}
Roughly speaking, Lemma \ref{30} tells us that which possible blocks will follow a block $c_\ell$ or $\overline{c_\ell }$. This can be simply described in Figure 1.
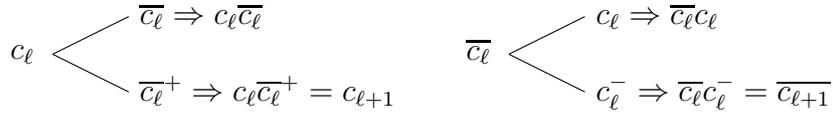
\begin{figure}[h]\label{figure1}
\centering
\begin{tikzpicture}[scale=5]
\draw(0,.3)--(.2,.4)node[right]{\normalsize $\overline{c_\ell}\Rightarrow c_{\ell }\overline{c_{\ell }}$};
\draw(0,.3)--(.2,.2)node[right]{\normalsize $\overline{c_\ell}^+\Rightarrow c_{\ell }\overline{c_{\ell }}^+=c_{\ell +1}$};

\draw(1.2,.3)--(1.4,.4)node[right]{\normalsize $c_\ell\Rightarrow \overline{c_{\ell }}c_{\ell }$};
\draw(1.2,.3)--(1.4,.2)node[right]{\normalsize $c_\ell^-\Rightarrow \overline{c_{\ell }}c_{\ell }^-=\overline{c_{\ell +1}}$};



\node [label={[xshift=-.4cm, yshift=1.08cm]\normalsize $c_\ell$}] {};

\node [label={[xshift=5.6cm, yshift=1.08cm]\normalsize $\overline{c_\ell}$}] {};
\end{tikzpicture}\caption{Relation induced by Lemma \ref{30}}
\end{figure}
By $A\rightharpoonup B$ we denote block $A$ followed by block $B$. We point out that in figure 1 the action $c_\ell \rightharpoonup \overline{c_\ell }^+$ cannot be implemented continuously infinite times since
 $(b_i)\in\uu_{q_{KL}}'$ cannot be ended with $\alpha (q_{KL})$. Similarly, the action $  \overline{c_\ell }\rightharpoonup c_\ell^-$ cannot be implemented  continuously infinite times.
%
%

Now we have $b_{N+j+1}=k+1=c_0$ and $b_{N+j}=k<m$. Then following block is either $\overline{c_0}$ or $\overline{c_0}^+$ by Lemma \ref{30}, i.e.,
\begin{equation}\label{oct10yi}
(b_i)=\omega (c_0^-)^jc_0\overline{c_0}(b_{N+j+2+i})_{i\geq 1}
\end{equation}
or
\begin{equation}\label{oct10er}
(b_i)=\omega (c_0^-)^jc_0\overline{c_0}^+(b_{N+j+2+i})_{i\geq 1}=\omega (c_0^-)^jc_1(b_{N+j+2+i})_{i\geq 1}
\end{equation}
by (\ref{e57}).
If (\ref{oct10yi}) occurs, then
$$
(b_i)=\omega (c_0^-)^jc_0\overline{c_0}c_0\cdots \;\;\textrm{or}\;\; (b_i)=\omega (c_0^-)^jc_0\overline{c_0}c_0^-\cdots=\omega (c_0^-)^jc_0\overline{c_1}\cdots
$$
by Lemma \ref{30} and (\ref{e57}).
 If (\ref{oct10er}) occurs, then
$$
(b_i)=\omega (c_0^-)^jc_1\overline{c_1}\cdots \;\;\textrm{or}\;\;
(b_i)=\omega (c_0^-)^jc_1\overline{c_1}^+\cdots=\omega (c_0^-)^jc_2\cdots
$$
by Lemma \ref{30} and (\ref{e57}).

In any cases described above, one can continuous to implement the  process in the same way as above. Therefore, $(b_i)$ is of form (\ref{251}) or its reflection.

{\bf CASE II}. $m=2k$ with $k\ge1$.

 Note that $\alpha(q_{KL})=(k+1)k(k-1)(k+1)\cdots$.  By (\ref{uicodedes1}) we have for $t\ge 1$
\begin{equation}\label{wx1031}
\begin{split}
&b_{t+1}b_{t+2}\cdots<(k+1)k(k-1)(k+1)\cdots ,\textrm{when}\; b_1\cdots b_t\ne m^t \\
&b_{t+1}b_{t+2}\cdots>(k-1)k(k+1)(k-1)\cdots,  \textrm{when}\; b_1\cdots b_t\ne 0^t.
\end{split}
\end{equation}
 We now split our discussion into two steps.

{\bf Step 1}. We shall show what does the block $\omega=b_1\cdots b_N$ looks like. As Case I we only consider $N>1$.

Case 1. Suppose  that  $\omega$ begins at $0$. Then by (\ref{wx1031}) we have $\omega=0^{N-1}b_N$ with $0<b_{N}\leq k+1$.

Case 2.  Suppose that $\omega$ begins at $m$. Then by (\ref{wx1031}) we have $\omega=m^{N-1}b_N$ with $k-1\leq b_{N}<m$.

{\bf  Step  2}. Now let us to explore the sequence $(b_{N+i})=(b_{N+i})_{i\geq 1}$.
Note  that $0<b_N<m$ and $\alpha (q_{KL})=(k+1)k(k-1)(k+1)\cdots$. Thus, by Lemma \ref{L22} we have for each $i\geq 1$
\begin{equation}\label{nov9}
(k-1)k(k+1)(k-1)\cdots<b_{N +i}b_{N +i+1}\cdots <(k+1)k(k-1)(k+1)\cdots.
\end{equation}
From (\ref{nov9}) it follows that
$$
b_{N+1}\in \{k-1, k, k+1\}=\{\overline{c_0}, c_0^-, c_0\}.
$$

 {\bf I}. $b_{N+1}=k$.

In this case,  either $(b_{N+i})_{i\geq 1}=k^\infty =(c_0^-)^\infty $ or there exists a positive integer $j$ such that $b_{N+1}\cdots b_{N+j+1}\in \{k^j(k+1), k^j(k-1)\}=\{(c_0^-)^jc_0, (c_0^-)^j\overline{c_0}\}$.

Without loss of generality, we assume that $b_{N+1}\cdots b_{N+j+1}=k^j(k+1)=(c_0^-)^jc_0$. Otherwise, we only need to consider
 $(\overline{b_{i}})_{i\geq 1}$ instead. So $b_{N+j+1}=k+1=c_0$.

  {\bf II}. $b_{N+1}\in \{k-1, k+1\}=\{\overline{c_0}, c_0\}$.

 Without loss of generality, we assume that $b_{N+1}=k+1=c_0$. Otherwise, we only need to consider
 $(\overline{b_{i}})_{i\geq 1}$ instead. For the sake of uniformity, we write  $b_{N+1}=k+1=(c_0^-)^0c_0$, corresponding to $j=0$.

 In the following we shall determine the tail $(b_{N+j+1+i})_{i\geq 1}$ by means of Lemma \ref{30}, where $j$ is a nonnegative integer.

Let us recall that
\begin{align}\label{e53}
\begin{split}
&c_0^-=\overline{c_0}^+=k,\; \;c_0=k+1,\; \overline{c_0}=k-1\\
&c_{\ell+1}=c_\ell\overline{c_\ell}^+,\;\; \overline{c_{\ell+1}}=\overline{c_\ell}c_\ell^-.\\
\end{split}
\end{align}
Roughly speaking, Lemma \ref{30} tells us that which block can follow a block $c_\ell$ or $\overline{c_\ell }$. This can be simply described in Figure 1.





 We point out that in figure 1 the action $c_\ell \rightharpoonup \overline{c_\ell }^+$ cannot be implemented continuously infinite times since
 $(b_i)\in\uu_{q_{KL}}'$ cannot be ended with $\alpha (q_{KL})$. Similarly, the action $  \overline{c_\ell }\rightharpoonup c_\ell^-$ cannot be implemented  continuously infinite times.
%
%

Now we have $b_{N+j+1}=k+1=c_0$ and $b_{N+j}<m$. Then following block is either $\overline{c_0}$ or $\overline{c_0}^+$ by Lemma \ref{30}, i.e.,
\begin{equation}\label{nov10yi}
(b_i)=\omega (c_0^-)^jc_0\overline{c_0}(b_{N+j+2+i})_{i\geq 1}
\end{equation}
or
\begin{equation}\label{nov10er}
(b_i)=\omega (c_0^-)^jc_0\overline{c_0}^+(b_{N+j+2+i})_{i\geq 1}=\omega (c_0^-)^jc_1(b_{N+j+2+i})_{i\geq 1}
\end{equation}
by (\ref{e53}).
If (\ref{nov10yi}) occurs, then
$$
(b_i)=\omega (c_0^-)^jc_0\overline{c_0}c_0\cdots \;\;\textrm{or}\;\; (b_i)=\omega (c_0^-)^jc_0\overline{c_0}c_0^-\cdots=\omega (c_0^-)^jc_0\overline{c_1}\cdots
$$
by Lemma \ref{30} and (\ref{e53}).
 If (\ref{nov10er}) occurs, then
$$
(b_i)=\omega (c_0^-)^jc_1\overline{c_1}\cdots \;\;\textrm{or}\;\;
(b_i)=\omega (c_0^-)^jc_1\overline{c_1}^+\cdots=\omega (c_0^-)^jc_2\cdots
$$
by Lemma \ref{30} and (\ref{e53}).

In any cases described above, one can continuous to implement the  process in the same way as above. Therefore, $(b_i)$ is of form (\ref{25}) or its reflection.
\end{proof}


Let $(q_0,q^*_0)$  be a connected component of $(M,m+1]\setminus\overline{\uu}$. Suppose that $\alpha (q_0)=(a_1\cdots a_n)^\infty =(c_0^-)^\infty $.
 For a word $v_1v_2\cdots v_p\in \{0,1,\cdots , m\}^p$ and $1\leq q\leq p$, let
 \begin{equation*}
(v_1\cdots v_p)|_q=v_1\cdots v_q \qtq{and} \sigma (v_1v_2\cdots v_p)=v_2\cdots v_p.
 \end{equation*}
 A word $u_1\cdots u_p\in \{0,1,\cdots , m\}^p$
is \emph{matched} to $a_1\cdots a_n$ if $u_p<m$ and for any $1\leq \ell \leq p$
$$
\sigma ^\ell (u_1\cdots u_pa_1\cdots a_{n})|_n \leq a_1\cdots a_n\;\;\textrm{whenever}\;\; u_1\cdots u_\ell \ne m^\ell
$$
and
$$
\sigma ^\ell (u_1\cdots u_pa_1\cdots a_{n})|_n \geq \overline{a_1\cdots a_n}\;\;\textrm{whenever}\;\; u_1\cdots u_\ell \ne 0^\ell.
$$

Obviously, if $u_1\cdots u_p$ is matched to $a_1\cdots a_n$, then
\begin{equation*}
\overline{u_1\cdots u_p}\qtq{and} u_1\cdots u_pa_1\cdots a_n
\end{equation*}
are also matched to $a_1\cdots a_n$.
\begin{theorem}\label{62}
Let $M, c_\ell$ be given in (\ref{bigm}) and (\ref{czero}).
If $(q_0,q^*_0)$ is a connected component of $(M,m+1]\setminus\overline{\uu}$ related to $c_0^-=a_1\cdots a_n$ with $q_0>M$. Then $(b_i)\in\uu_{q_0^*}'\setminus\uu_{q_0}'$ if and only if $(b_i)$ is formed by the sequences of form
\begin{equation*}\label{e516}
\omega (c_0^-)^j(c_{i_1}\overline{c_{i_1}})^{j_1}(c_{i_1}\overline{c_{i_2}})^{l_1}(c_{i_2}\overline{c_{i_2}})^{j_2}(c_{i_2}\overline{c_{i_3}})^{l_2}\cdots
\end{equation*}
or their reflections, where
\begin{equation*}
\omega \in \bigcup _{p=1}^\infty \{u_1\cdots u_p: u_1\cdots u_p\; \textrm{is matched to}\; a_1\cdots a_n\},
\end{equation*}
$0\le i_1<i_2<\cdots $ are integers, $0\leq l_i\leq 1$,  $j\in \{0, \infty \}$ and $0\le  j_i\le\infty $ for all $i\ge 1$.
\end{theorem}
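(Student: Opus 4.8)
The plan is to follow the two-step template used for Theorem~\ref{61}, replacing the explicit prefix descriptions there by the notion of a matched word, and to treat sufficiency and necessity separately.

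For sufficiency I would verify directly, via Lemma~\ref{L22}, that every sequence of the displayed form lies in $\uu'_{q_0^*}$. The core of this is to check the two lexicographic inequalities of Lemma~\ref{L22} against $\alpha(q_0^*)=(c_i)$, which reduce, by Lemma~\ref{29}, to the inequalities satisfied by the blocks $c_\ell$ and $\overline{c_\ell}$; the \emph{matched} condition on $\omega$ is exactly what guarantees the inequalities inside the prefix, and the closure remark (matched words remain matched after appending $a_1\cdots a_n$) lets one absorb any leading $(c_0^-)^j$. One also checks that these sequences do not lie in $\uu'_{q_0}$: the tail either equals or exceeds $\alpha(q_0)=(c_0^-)^\infty$, so the strict inequality of Lemma~\ref{L22} at $q_0$ fails. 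This step is routine and I would leave the computations to the reader, as was done for Theorem~\ref{61}.

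For necessity, take $(b_i)\in\uu'_{q_0^*}\setminus\uu'_{q_0}$. Since $(b_i)\notin\uu'_{q_0}$, Lemma~\ref{L22} applied at $q_0$ produces a smallest index $N$ at which the relevant strict inequality fails; up to passing to the reflection $(\overline{b_i})$ I may assume $b_{N+1}b_{N+2}\cdots\ge(c_0^-)^\infty$ with $b_N<m$, while $q_0^*$-admissibility forces $b_{N+1}b_{N+2}\cdots<\alpha(q_0^*)$. Step~1 is to show the prefix $\omega=b_1\cdots b_N$ is matched to $a_1\cdots a_n$: the matching inequalities for $\sigma^\ell(\omega a_1\cdots a_n)|_n$ are deduced from the $q_0^*$-admissibility of $(b_i)$ together with the minimality of $N$, using $\alpha(q_0)<\alpha(q_0^*)$ to convert comparisons against the infinite sequence $\alpha(q_0^*)$ into comparisons against the finite word $a_1\cdots a_n$ and its reflection. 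Step~2 reads off the tail: from $(c_0^-)^\infty\le b_{N+1}\cdots<\alpha(q_0^*)$ the block $b_{N+1}\cdots b_{N+n}$ must equal $c_0^-$ or $c_0$; in the former case either the tail is $(c_0^-)^\infty$ (giving $j=\infty$) or it eventually increases, in which case the finitely many leading $(c_0^-)$ blocks are folded into a longer matched prefix (giving $j=0$). Once the tail begins with $c_0$, repeated application of Lemma~\ref{30} (the transitions summarised in Figure~1) forces the Thue--Morse block pattern $(c_{i_1}\overline{c_{i_1}})^{j_1}(c_{i_1}\overline{c_{i_2}})^{l_1}\cdots$, with $0\le l_i\le1$ and the non-terminating constraint ($(b_i)$ cannot end in $\alpha(q_0^*)$, so neither $c_\ell\rightharpoonup\overline{c_\ell}^+$ nor $\overline{c_\ell}\rightharpoonup c_\ell^-$ may be iterated indefinitely) coming from $\uu'_{q_0^*}$ membership. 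This yields the stated form, and the restriction $j\in\{0,\infty\}$ is precisely the closure of matched words under appending $a_1\cdots a_n$.

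The main obstacle will be Step~1, the prefix analysis. In Theorem~\ref{61} the component was the first one, with $q_0=M=k+1$ and $\uu'_{q_0}=\{0^\infty,m^\infty\}$, so the prefix was forced into the explicit shapes $0^Nb$ or $m^Nb$; here $q_0>M$ and $\uu'_{q_0}$ is rich, so one must instead characterise $\omega$ exactly as a matched word. The delicate point, sharpened by the case $n\ge2$ where block boundaries no longer coincide with single digits, is to align the two scales: the finite comparison word $a_1\cdots a_n$ coming from $\alpha(q_0)$, and the infinite sequence $\alpha(q_0^*)$ that governs $q_0^*$-admissibility. Carefully tracking the shifts $\sigma^\ell$ across these block boundaries, and handling the boundary position where $b_N<m$ is used, is the technical heart of the argument.
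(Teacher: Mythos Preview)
Your proposal is correct and follows essentially the same approach as the paper. The only organisational difference is that the paper splits upfront into two cases according to whether $(b_i)$ ends with $(c_0^-)^\infty$ or its reflection, and defines the minimal index $\eta$ by the condition $b_{\eta+1}\cdots b_{\eta+n}>c_0^-$ on the first $n$-block (so that $b_{\eta+1}\cdots b_{\eta+n}=c_0$ directly); you instead define $N$ via the full-tail inequality $b_{N+1}b_{N+2}\cdots\ge(c_0^-)^\infty$ and absorb any leading $(c_0^-)$ blocks afterwards by the closure of matched words under appending $a_1\cdots a_n$. Both routes yield the same prefix/tail decomposition. One point you assert without argument is $b_N<m$; the paper proves the analogous $b_\eta<m$ by a short contradiction (if $b_N=m$ then $a_1=m$ and, using $\sigma((c_0^-)^\infty)\le(c_0^-)^\infty$, one forces $c_0^-=m^n$, i.e.\ $q_0=m+1$), and you should include this step since it is exactly what makes $\omega$ satisfy the ``$u_p<m$'' clause of the matched definition.
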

\begin{proof}

As described in (\ref{czero}), we have
$$
\alpha (q_0)=(a_1\cdots a_n)^\infty =(c_0^-)^\infty
$$
and
$$
\alpha (q_0^*)=c_0\overline{c_0}^+\overline {c_0}c_0\cdots =a_1\cdots a_{n-1}a_n^+\overline{a_1\cdots a_n}\cdots
$$
Now taking a $(b_i)\in\uu_{q_0^*}'\setminus\uu_{q_0}'$.

{\bf CASE I}. $(b_i)$  ends with neither $(c_0^-)^\infty$ nor  $\overline{(c_0^-)^\infty}$.

Since $(b_i)\notin \uu_{q_0}'$, there exist a smallest positive integer $\eta $ such that
\begin{equation}\label{1105li}
b_1\cdots b_\eta \ne m^\eta \;\;\textrm{and}\;\; b_{\eta +1}\cdots b_{\eta +n}> c_0^-=a_1\cdots a_n,
\end{equation}
or
$$
b_1\cdots b_\eta \ne 0^\eta \;\;\textrm{and}\;\; b_{\eta +1}\cdots b_{\eta +n}< \overline{c_0^-}=\overline{a_1\cdots a_n}.
$$
Without loss of generality we assume that (\ref{1105li}) holds. Otherwise one can consider $\overline{(b_i)}$ instead.

On the other hand, by Lemma \ref{L22} we have
$$
b_{\eta +1}\cdots b_{\eta +n}\cdots <c_0\overline{c_0}^+\overline {c_0}c_0\cdots
$$
Thus, combining (\ref{1105li}) one can get
\begin{equation*}
b_1\cdots b_\eta \ne m^\eta \;\;\textrm{and}\;\; b_{\eta +1}\cdots b_{\eta +n}=c_0=a_1\cdots a_n^+.
\end{equation*}
We claim that $b_\eta <m$. This is clear when $\eta =1$.
Suppose that  $\eta >1$ and $b_\eta =m$. Then by the minimality we have
$$
b_1\cdots b_{\eta -1} \ne m^{\eta -1} \;\;\textrm{and}\;\; b_\eta b_{\eta +1}\cdots b_{\eta +n-1}\leq c_0^-=a_1\cdots a_n.
$$
By (\ref{1105li}) this implies that $a_1\cdots a_n=m^n$ and so $q_0=m+1$, a contradiction. Hence we get
\begin{equation*}
 b_\eta < m \;\;\textrm{and}\;\; b_{\eta +1}\cdots b_{\eta +n}=c_0=a_1\cdots a_n^+.
\end{equation*}
By the same argument as that we did in Theorem \ref{61}, one can get $(b_{\eta +i})_{i\geq 1}$ is of form
$$
(c_{i_1}\overline{c_{i_1}})^{j_1}(c_{i_1}\overline{c_{i_2}})^{l_1}(c_{i_2}\overline{c_{i_2}})^{j_2}(c_{i_2}\overline{c_{i_3}})^{l_2}\cdots .
$$

Next let us investigate what the prefix $b_1\cdots b_\eta $ looks like.


By the definition of $\eta $ we can denote
$$
b_1\cdots b_\eta =m^ub_{u+1}\cdots b_{\eta },
$$
where $u<\eta $ is a nonnegative integer, $b_{u+1}, b_{\eta }<m$. Then by (\ref{1105li})

\begin{equation*}
\begin{split}
&\sigma ^{p}(b_1\cdots b_\eta a_1\cdots a_n)|_n=\sigma ^p(b_1\cdots b_\eta b_{\eta +1}\cdots b_{\eta +n-1}b_{\eta +n}^- )|_n\\
&\leq a_1\cdots a_n \;\;\textrm{for}\;\; u<p\leq \eta
\end{split}
\end{equation*}
and
\begin{equation*}
\begin{split}
&\sigma ^{p}(b_1\cdots b_\eta a_1\cdots a_n)|_n=\sigma ^p(b_1\cdots b_\eta b_{\eta +1}\cdots b_{\eta +n-1}b_{\eta +n}^- )|_n\\
&\geq \overline{ a_1\cdots a_n} \;\;\textrm{for}\;\; 1\leq p\leq \eta \;\textrm{with}\;
b_1\cdots b_p\ne 0^p.
\end{split}
\end{equation*}
Therefore, $b_1\cdots b_\eta $ is matched to $a_1\cdots a_n$.

{\bf CASE II}. $(b_i)$  ends with either $(c_0^-)^\infty$ or  $\overline{(c_0^-)^\infty}$.

Without loss of generality, we assume  $(b_i)$  ends with  $(c_0^-)^\infty$. Otherwise one only needs to consider $\overline{(b_i)}$ instead.  Then $(b_i)$ can be written as
$$
(b_i)=b_1\cdots b_p (a_1\cdots a_n)^\infty \;\text{where $p$ is a positive integer}.
$$
We claim that $b_1\cdots b_p $ is matched to $a_1\cdots a_n$. Otherwise there exists a smallest positive integer $1\leq \ell  \leq p$ such that
\begin{equation*}
\begin{split}
&\sigma ^\ell (b_1\cdots b_pa_1\cdots a_{n})|_n\\
=&\sigma ^\ell (b_1\cdots b_pb_{p+1}\cdots b_{p+n})|_n> a_1\cdots a_n\;\;\textrm{and}\;\; b_1\cdots b_\ell \ne m^\ell
\end{split}
\end{equation*}
or
\begin{equation*}
\begin{split}
&\sigma ^\ell (b_1\cdots b_pa_1\cdots a_{n})|_n\\
=&\sigma ^\ell (b_1\cdots b_pb_{p+1}\cdots b_{p+n})|_n< \overline{a_1\cdots a_n}\;\;\textrm{and}\;\; b_1\cdots b_\ell \ne 0^\ell.
\end{split}
\end{equation*}
Then by the same argument as that in case  I, we have that
$$
b_\ell <m \;\textrm{and} \; b_{\ell +1}\cdots b_{\ell +n}=c_0=a_1\cdots a_{n-1}a_n^+
$$
or
$$
b_\ell >0 \;\textrm{and} \; b_{\ell +1}\cdots b_{\ell +n}=\overline{c_0}=\overline{a_1\cdots a_{n-1}a_n^+}.
$$
So $(b_i)$ cannot end with $(c_0^-)^\infty$ by the argument in case I, which contradicts our hypothesis.

The sufficiency can be checked directly.
\end{proof}

Recall that for each connected component $(q_0,q^*_0)$, we have $(q_0,q^*_0)\cap\vv=\set{q_\ell:\ell\ge 1}$. If $q\in(q_\ell,q_{\ell+1}]$, then $\alpha(q)\le\alpha(q_{\ell+1})=(c_\ell\overline{c_\ell})^\infty$ by Lemma \ref{16}, it follows from Lemma \ref{30} that the blocks $w c_\ell$ and $\overline{w c_\ell}$ are forbidden in each $(b_i)\in\uu_q'$, where $w\in\set{0,\cdots,m-1}$. Otherwise, $(b_i)$ would end with $(c_\ell\overline{c_\ell})^\infty$, which leads to contradiction by Lemma \ref{L22}. So by Theorems \ref{61} and \ref{62} we obtain the following results.

\begin{corollary}\label{63}
Let $M, c_\ell$ be given in (\ref{bigm}) and (\ref{czero}).
Let $(q_0,q^*_0)=(M,q_{KL})$ be the first connected component of $(M,m+1]\setminus\overline{\uu}$ related to $c_0^-$.
\begin{enumerate}[\upshape (I)]
\item Suppose that $q\in(q_\ell,q_{\ell+1}]$ for some $\ell\ge 1$ and $m=2k+1\ge3$. Then $(b_i)\in\uu_q'\setminus\set{0^\infty,m^\infty}$ if and only if $(b_i)$ is formed by sequences of the form
\begin{equation*}
\omega (c_0^-)^j(c_{i_1}\overline{c_{i_1}})^{j_1}(c_{i_1}\overline{c_{i_2}})^{l_1}(c_{i_2}\overline{c_{i_2}})^{j_2}\cdots(c_{i_{n-1}}\overline{c_{i_n}})^{l_{n-1}}(c_{i_n}\overline{c_{i_n}})^{j_n}
\end{equation*}
or their reflections, where
$0\le i_1<i_2<\cdots<i_n<\ell$ are integers, $1\leq j\leq 2$, $0\leq l_i\leq 1$,  $0\le  j_i\le\infty $ for all $i\ge 1$ and
\begin{align*}
 \omega \in & \{1,\cdots, m-1\}\cup \bigcup _{N=1}^\infty \{0^Nb: 0<b\leq k+1\}\\
 &\cup \bigcup _{N=1}^\infty \{m^Nb: k\leq b <m\}.
\end{align*}

\item Suppose that $q\in(q_\ell,q_{\ell+1}]$  for some $\ell\ge 1$ and $m=2k$. Then $(b_i)\in\uu_q'\setminus\set{0^\infty,m^\infty}$ if and only if $(b_i)$ is formed by sequences of the form
\begin{equation*}
\omega (c_0^-)^j(c_{i_1}\overline{c_{i_1}})^{j_1}(c_{i_1}\overline{c_{i_2}})^{l_1}(c_{i_2}\overline{c_{i_2}})^{j_2}\cdots(c_{i_{n-1}}\overline{c_{i_n}})^{l_{n-1}}(c_{i_n}\overline{c_{i_n}})^{j_n}
\end{equation*}
or their reflections, where  $0\le i_1<i_2<\cdots<i_n<\ell $ are integers, $0\leq l_i\leq 1$,  $0\le j, j_i\le\infty $ for all $i\ge 1$ and
\begin{align*}
\omega \in & \{1,\cdots, m-1\}\cup \bigcup _{N=1}^\infty \{0^Nb: 0<b\leq k+1\}\\
&\cup \bigcup _{N=1}^\infty \{m^Nb: k-1\leq b <m\}.
\end{align*}
\end{enumerate}
\end{corollary}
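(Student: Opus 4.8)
The plan is to read off the corollary from the endpoint description in Theorem \ref{61} together with the monotonicity of $q\mapsto\alpha(q)$. Fix $q\in(q_\ell,q_{\ell+1}]$; since this interval sits inside $(M,q_{KL})$ we have $q<q_{KL}$, and because $\uu_p'\subseteq\uu_{p'}'$ whenever $p\le p'$, every $(b_i)\in\uu_q'\setminus\set{0^\infty,m^\infty}$ already has one of the two shapes of Theorem \ref{61} (the same reduction applies to Theorem \ref{62} for a general component). Moreover Lemma \ref{16} gives $\alpha(q)\le\alpha(q_{\ell+1})=(c_\ell\overline{c_\ell})^\infty$. So the whole content of the corollary is that, on passing from the base $q_{KL}$ to the smaller base $q$, the admissibility test of Lemma \ref{L22} truncates the index sequence to $i_1<\dots<i_n<\ell$ and forces the word to terminate in a block $(c_{i_n}\overline{c_{i_n}})^{j_n}$.

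First I would establish the forbidden-block statement announced before the corollary: no $(b_i)\in\uu_q'$ can contain $wc_\ell$ with $w<m$ (nor its reflection $\overline{wc_\ell}$). Assuming $b_j<m$ and $b_{j+1}\cdots b_{j+2^\ell n}=c_\ell$, Lemma \ref{30}(i) forces the next block to be $\overline{c_\ell}$ or $\overline{c_\ell}^+$; in the first case Lemma \ref{30}(ii) then forces $c_\ell$ or $c_\ell^-$, while in the second $c_\ell\overline{c_\ell}^+=c_{\ell+1}$ again opens with $c_\ell$. Iterating these two alternatives via Lemma \ref{30}, and using that by Lemma \ref{L22} the move $c_\ell\rightharpoonup\overline{c_\ell}^+$ (and symmetrically $\overline{c_\ell}\rightharpoonup c_\ell^-$) cannot be taken infinitely often, I would conclude that the tail of $(b_i)$ is pinned to $(c_\ell\overline{c_\ell})^\infty=\alpha(q_{\ell+1})$. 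Then the shift $\sigma^j(b_i)=(c_\ell\overline{c_\ell})^\infty$ with $b_j<m$ satisfies $\sigma^j(b_i)\ge\alpha(q)$, contradicting Lemma \ref{L22}. Feeding this restriction into the forms of Theorem \ref{61} shows that every block index after the prefix $\omega$ must be $<\ell$, and that the only way to remain an infinite admissible word without reaching level $\ell$ is to close with $(c_{i_n}\overline{c_{i_n}})^{j_n}$; this is precisely the asserted shape with $i_n<\ell$.

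For the converse I would check that each displayed sequence indeed lies in $\uu_q'$ by verifying the two inequalities of Lemma \ref{L22} against $\alpha(q)$, exactly as in the sufficiency part of Theorem \ref{61} via Lemma \ref{29}; here the lower bound $q>q_\ell$ is what makes level-$(\ell-1)$ blocks admissible, since $q>q_\ell$ gives $(c_{\ell-1}\overline{c_{\ell-1}})^\infty=\alpha(q_\ell)<\alpha(q)$, so indices up to $\ell-1$ are genuinely allowed. The step I expect to be the main obstacle is the forced-tail argument: one must argue that repeated application of Lemma \ref{30} at level $\ell$ really drives the \emph{entire} tail to $(c_\ell\overline{c_\ell})^\infty$ rather than merely a long finite prefix of it, and to treat uniformly the two situations ``$(b_i)$ ends with $(c_0^-)^\infty$'' and its negation, just as in the proof of Theorem \ref{62}.
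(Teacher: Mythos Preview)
Your plan is exactly the paper's: embed $\uu_q'$ in $\uu_{q_{KL}}'$, invoke Theorem \ref{61} for the shape, then rule out the block $wc_\ell$ (and its reflection) via Lemma \ref{30} together with Lemma \ref{L22}, and read off the truncation $i_n<\ell$. The converse via Lemma \ref{29} and the observation $\alpha(q_\ell)<\alpha(q)$ is also what is needed.

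The forced--tail step, which you rightly flag as the crux, is not quite argued correctly. The phrase ``cannot be taken infinitely often'' is the constraint appropriate at $q_{KL}$; for $q\in(q_\ell,q_{\ell+1}]$ the moves $c_\ell\rightharpoonup\overline{c_\ell}^+$ and $\overline{c_\ell}\rightharpoonup c_\ell^-$ are forbidden \emph{outright}, not merely eventually. Indeed, if $b_j<m$ and the next $2^{\ell+1}n$ digits were $c_\ell\overline{c_\ell}^+$, then already
\[
\sigma^j(b_i)\ge c_\ell\overline{c_\ell}^+0^\infty>(c_\ell\overline{c_\ell})^\infty\ge\alpha(q),
\]
contradicting Lemma \ref{L22}; the reflected move is killed symmetrically. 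With both level--raising moves excluded at once, Lemma \ref{30} leaves only $c_\ell\rightharpoonup\overline{c_\ell}\rightharpoonup c_\ell\rightharpoonup\cdots$, which pins the tail to $(c_\ell\overline{c_\ell})^\infty$ and yields the contradiction you state. If instead you allow finitely many level--raising moves (as ``not infinitely often'' would), the tail need not be $(c_\ell\overline{c_\ell})^\infty$: for instance $c_\ell\overline{c_\ell}\,c_\ell\overline{c_{\ell+1}}(c_{\ell+1}\overline{c_{\ell+1}})^\infty$ is a legal $\uu_{q_{KL}}'$ tail that is strictly below $(c_\ell\overline{c_\ell})^\infty$, so your contradiction at position $j$ does not fire. (That tail \emph{does} fail Lemma \ref{L22} at a later position, but you have not said so.) Replace ``infinitely often'' by the one--step exclusion above and the argument goes through exactly as the paper intends.
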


\begin{corollary}\label{64}
Let $M, c_\ell$ be given in (\ref{bigm}) and (\ref{czero}).
If $(q_0,q^*_0)$ is a connected component of $(M,m+1]\setminus\overline{\uu}$ related to $c_0^-=a_1\cdots a_n$ with $q_0>M$, and $q\in(q_\ell,q_{\ell+1}]$ for some $\ell\ge 1$. Then $(b_i)\in\uu_q'\setminus\uu_{q_0}'$ if and only if $(b_i)$ is formed by sequences of the form
\begin{equation*}
\omega (c_0^-)^j(c_{i_1}\overline{c_{i_1}})^{j_1}(c_{i_1}\overline{c_{i_2}})^{l_1}(c_{i_2}\overline{c_{i_2}})^{j_2}\cdots(c_{i_{n-1}}\overline{c_{i_n}})^{l_{n-1}}(c_{i_n}\overline{c_{i_n}})^{j_n}
\end{equation*}
or their reflections, where
\begin{equation*}
\omega \in \bigcup _{p=1}^\infty \{u_1\cdots u_p: u_1\cdots u_p\; \textrm{is matched to}\; a_1\cdots a_n\},
\end{equation*}
$0\le i_1<i_2<\cdots<i_n<\ell $ are integers, $0\leq l_i\leq 1$,  $j\in \{0, \infty \}$ and $0\le  j_i\le\infty $ for all $i\ge 1$.
\end{corollary}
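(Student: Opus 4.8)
The plan is to obtain Corollary~\ref{64} from Theorem~\ref{62} together with the forbidden-block observation recorded just before Corollary~\ref{63}; the argument runs parallel to the passage from Theorem~\ref{61} to Corollary~\ref{63}, so I would only spell out the new ingredient, namely the translation of ``$q\le q_{\ell+1}$'' into the index bound $i_n<\ell$. First I would reduce to Theorem~\ref{62}. Since $q_\ell\uparrow q^*_0$ strictly, the hypothesis $q\in(q_\ell,q_{\ell+1}]$ gives $q<q^*_0$, and the monotonicity $\uu'_p\subseteq\uu'_{p'}$ for $p\le p'$ (already used in Lemma~\ref{6}) yields $\uu'_q\subseteq\uu'_{q^*_0}$. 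Hence $\uu'_q\setminus\uu'_{q_0}\subseteq\uu'_{q^*_0}\setminus\uu'_{q_0}$, so every $(b_i)$ in the left-hand set already has the shape produced by Theorem~\ref{62}, with $\omega$ matched to $a_1\cdots a_n$, integers $0\le i_1<i_2<\cdots$, $0\le l_i\le 1$, $j\in\{0,\infty\}$ and $0\le j_i\le\infty$; it remains to cut down the admissible index set and to verify the converse.

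The heart of the proof is the extra constraint. Because $\alpha(q)\le\alpha(q_{\ell+1})=(c_\ell\overline{c_\ell})^\infty$ by Lemma~\ref{16}, the observation preceding Corollary~\ref{63} (an application of Lemmas~\ref{30} and \ref{L22}) shows that the blocks $wc_\ell$ and $\overline{wc_\ell}$ with $w\in\{0,\dots,m-1\}$ are forbidden in any element of $\uu'_q$; equivalently $c_\ell$ may be preceded only by $m$, and $\overline{c_\ell}$ only by $0$. I would then inspect the displayed form, in which $c$-blocks and $\overline{c}$-blocks strictly alternate. The predecessor of any $c$-block is the last letter of a block $\overline{c_{i'}}$, or the last letter of $c_0^-=a_1\cdots a_n$, or (for the first block, when $j=0$) the last letter of $\omega$. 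Now $c_{i'}$ ends in a digit $\ge 1$, so $\overline{c_{i'}}$ ends in a digit $\le m-1$; moreover $a_n<m$ because $c_0=a_1\cdots a_n^+$ is defined, and $\omega$ ends in a digit $<m$ by the very definition of ``matched'' (the requirement $u_p<m$). Thus every $c$-block is preceded by a digit $<m$, and symmetrically every $\overline{c}$-block is preceded by a digit $>0$. Since $c_i$ begins with $c_\ell$ and $\overline{c_i}$ with $\overline{c_\ell}$ whenever $i\ge\ell$ (by the recursion $c_{i+1}=c_i\overline{c_i}^+$), the occurrence of any index $i_r\ge\ell$ would expose $c_\ell$ (resp.\ $\overline{c_\ell}$) preceded by a digit $<m$ (resp.\ $>0$), i.e.\ a forbidden block. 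Therefore all indices satisfy $i_r<\ell$; being strictly increasing and bounded by $\ell$ they are finite in number, say $i_1<\cdots<i_n<\ell$, and the nested form of Theorem~\ref{62} must terminate with a block $(c_{i_n}\overline{c_{i_n}})^{j_n}$, which is exactly the asserted shape.

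For the converse I would argue that any sequence of the stated form with $i_1<\cdots<i_n<\ell$ lies in $\uu'_{q^*_0}$ by the sufficiency half of Theorem~\ref{62}, and that, being built only from blocks of level $\le\ell-1$, each of its tails (and each reflected tail) is bounded above by $(c_{\ell-1}\overline{c_{\ell-1}})^\infty=\alpha(q_\ell)$, which is $<\alpha(q)$ since $q>q_\ell$ and $q\mapsto\alpha(q)$ is strictly increasing. Hence the inequalities of Lemma~\ref{L22} hold with $\alpha(q)$ in place of $\alpha(q^*_0)$, so $(b_i)\in\uu'_q$, completing the equivalence. The reflection case and the degenerate case $n=1$ are handled exactly as in the proof of Theorem~\ref{61}.

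The main obstacle is the middle paragraph: one must be certain that the predecessor of every $c$-block is genuinely $<m$ and that of every $\overline{c}$-block genuinely $>0$, since this is where the ``matched'' hypothesis on $\omega$ and the inequality $a_n<m$ are actually used, and one must check that an index $i_r\ge\ell$ cannot be concealed across block boundaries but always surfaces a $c_\ell$ (or $\overline{c_\ell}$) in a forbidden position. Once this book-keeping on leading letters and boundary digits is in place, both directions follow mechanically, so I expect the write-up to be short modulo this verification.
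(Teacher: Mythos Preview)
Your proposal is correct and follows precisely the approach the paper itself uses: the paper derives both Corollaries~\ref{63} and~\ref{64} in a single short paragraph preceding them, noting that for $q\in(q_\ell,q_{\ell+1}]$ the blocks $wc_\ell$ and $\overline{wc_\ell}$ (with $w<m$) are forbidden in $\uu'_q$, and then invoking Theorems~\ref{61} and~\ref{62}. You supply considerably more detail than the paper does---in particular your verification that every $c$-block in the displayed form is preceded by a digit $<m$ (using $u_p<m$ from the ``matched'' definition, $a_n<m$, and that $\overline{c_{i'}}$ ends in a digit $\le m-1$) is exactly the bookkeeping the paper leaves implicit, and your converse sketch via the bound $\alpha(q_\ell)<\alpha(q)$ is likewise more explicit than the paper's bare ``sufficiency can be checked directly.''
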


\section*{Acknowlegements}
The authors would like to thank Vilmos Komornik for some suggestions on the
original versions of the manuscript, and for his generous hospitality during the first author's visit in the Department of Mathematics of the University
of Strasbourg. The authors are supported by NSFC No. 12071148, 11971079 and Science and Technology Commission of Shanghai Municipality (STCSM) No.~18dz2271000.

\end{document}